\newcommand{\cd}{{\rm cd}}
\renewcommand{\ss}{{\rm ss}}
\renewcommand{\a}{\alpha}
\newcommand{\BM}{\mathbb{B}}
\newcommand{\e}{\epsilon}
\renewcommand{\l}{\lambda} \renewcommand{\O}{\Omega}
 \renewcommand{\to}{\rightarrow}
 \newcommand{\C}{\mathcal{C}}
\newcommand{\la}{\langle}
\newcommand{\ra}{\rangle}
\newcommand{\leqs}{\leqslant}
\newcommand{\geqs}{\geqslant}
 \newcommand{\vs}{\vspace{3mm}}
\newcommand{\cA}{\mathcal{A}}
\newcommand{\imod}[1]{\allowbreak\mkern4mu({\operator@font mod}\,\,#1)}
\newtheorem{theorem}{Theorem}
\newtheorem*{conj*}{Conjecture}
\newtheorem{corol}[theorem]{Corollary}
\newtheorem{thm}{Theorem}[section]
\newtheorem{prop}[thm]{Proposition}
\newtheorem{lem}[thm]{Lemma}
\theoremstyle{definition}
\newtheorem{remark}{Remark}
\begin{document}

\author{Timothy C. Burness}
 \address{T.C. Burness, School of Mathematics, University of Bristol, Bristol BS8 1TW, UK}
 \email{t.burness@bristol.ac.uk}

\address{D.R. Heath-Brown, Mathematical Institute, Radcliffe Observatory Quarter, Woodstock Road,
Oxford OX2 6GG, UK}
\email{rhb@maths.ox.ac.uk}

 \author{Martin W. Liebeck}
\address{M.W. Liebeck, Department of Mathematics,
    Imperial College, London SW7 2BZ, UK}
\email{m.liebeck@imperial.ac.uk}

\author{Aner Shalev}
\address{A. Shalev, Institute of Mathematics, Hebrew University, Jerusalem 91904, Israel}
\email{shalev@math.huji.ac.il}

\title{On the length and depth of finite groups}
\dedicatory{\rm With an appendix by D.R. Heath-Brown}

\begin{abstract}
An unrefinable chain of a finite group $G$ is a chain of subgroups $G = G_0 > G_1 > \cdots > G_t = 1$, where each $G_i$ is a maximal subgroup of $G_{i-1}$. The length (respectively, depth) of $G$ is the maximal (respectively, minimal) length of such a chain. We studied the depth of finite simple groups in a previous paper, which included a classification of the simple groups of depth $3$. Here we go much further by determining the finite groups of depth $3$ and $4$. We also obtain several new results on the lengths of finite groups. For example, we classify the simple groups of length at most $9$, which extends earlier work of Janko and Harada from the 1960s, and we use this to describe the structure of arbitrary finite groups of small length. We also present a number-theoretic result of Heath-Brown, which implies that there are infinitely many non-abelian simple groups of length at most $9$.

Finally we study the chain difference of $G$ (namely the length minus the depth). We obtain results on groups with chain difference $1$ and $2$, including a complete classification of the simple groups with chain difference $2$, extending earlier work of Brewster et al. We also derive  a best possible lower bound on the chain ratio (the length divided by the depth) of simple groups, which yields an explicit linear bound on the length of $G/R(G)$ in terms of the chain difference of $G$, where $R(G)$ is the soluble radical of $G$.
\end{abstract}

\subjclass[2010]{Primary 20E32, 20E15; Secondary 20E28}
\date{\today}
\maketitle

\section{Introduction}\label{s:intro}

An \emph{unrefinable} chain of length $t$ of a finite group $G$ is a chain of subgroups
\begin{equation}\label{e:chain}
G = G_0 > G_1 > \cdots > G_{t-1} > G_t=1,
\end{equation}
where each $G_{i}$ is a maximal subgroup of $G_{i-1}$. The \emph{length} of $G$, denoted by $l(G)$, is the maximal length of an unrefinable chain. This notion arises naturally in several different contexts, finding a wide range of applications. For example, Babai \cite{Babai} investigated the length of symmetric groups in relation to the computational complexity of algorithms for finite permutation groups. In a different direction,
Seitz, Solomon and Turull studied the length of finite groups of Lie type in a series of papers in the early 1990s \cite{SST,ST,ST2}, motivated by applications to fixed-point-free automorphisms of finite soluble groups. In fact, the notion predates both the work of Babai and Seitz et al. Indeed,
Janko and Harada studied the simple groups of small length in the 1960s, culminating in Harada's description of the finite simple groups of length at most $7$ in \cite{Harada}.

Given the definition of $l(G)$, it is also natural to consider the \emph{minimal} length of an unrefinable chain for $G$. Following \cite{BLS1}, we call this number the \emph{depth} of $G$, denoted by $\l(G)$. For example, if $G$ is a cyclic group of order $n \geqs 2$, then $\l(G) = \O(n)$, the number of prime divisors of $n$ (counting multiplicities). In particular, $\l(G)=1$ if and only if $G$ has prime order. This notion appears in the earlier work of several authors. For example, in \cite{SW} Shareshian and Woodroofe investigate the length of various chains of subgroups of finite groups $G$ in the context of lattice theory (in their paper, the depth of $G$ is denoted by $\textsf{minmaxl}(G)$). There are also several papers on the so-called \emph{chain difference} ${\rm cd}(G) = l(G) - \l(G)$ of a finite group $G$. For example, a well known theorem of Iwasawa \cite{I} states that ${\rm cd}(G) = 0$ if and only if $G$ is supersoluble. The simple groups $G$ with ${\rm cd}(G)=1$ have been determined by Brewster et al. \cite{BWZ} (also see \cite{HS} and \cite{Pet} for related results).

In \cite{BLS1}, we focus on the depth of finite simple groups. One of the main results is \cite[Theorem 1]{BLS1}, which determines the simple groups of depth $3$ (it is easy to see that $\l(G) \geqs 3$ for every non-abelian simple group $G$); the groups that arise are recorded in  Table \ref{tab:main}. We also show that alternating groups have bounded depth (indeed, $\l(A_n) \leqs 23$ for all $n$, whereas $l(A_n)$ tends to infinity with $n$) and we obtain upper bounds on the depth of each simple group of Lie type. The exact depth of each sporadic simple group is given in \cite[Lemma 3.3]{BLS1}. We refer the reader to \cite{BLS2, BLS3} for results on analogous notions of length and depth for connected algebraic groups over algebraically closed fields and connected compact Lie groups.

\renewcommand{\arraystretch}{1.1}
\begin{table}[h]
\[\begin{array}{lll} \hline\hline
G && \hspace{5mm} \mbox{Conditions} \\ \hline
A_p && \hspace{5mm} \mbox{$p$ and $(p-1)/2$ prime, $p \not\in \{7,11,23\}$} \\
{\rm L}_2(q) && \left\{\begin{array}{l}
\mbox{$(q+1)/(2,q-1)$ or $(q-1)/(2,q-1)$ prime, $q \ne 9$; or} \\
\mbox{$q$ prime and $q \equiv \pm 3, \pm 13 \imod{40}$; or} \\
\mbox{$q=3^k$ with $k \geqs 3$ prime}
\end{array}\right. \\
{\rm L}_n^{\e}(q) && \hspace{5mm} \mbox{$n$ and $\frac{q^n-\e}{(q-\e)\,(n,q-\e)}$ both prime, $n \geqs 3$ and} \\
&& \hspace{5mm} (n,q,\e) \ne (3,4,+), (3,3,-), (3,5,-), (5,2,-) \\
{}^2B_2(q) & & \hspace{5mm} \mbox{$q-1$ prime} \\
{\rm M}_{23},  \; \mathbb{B} && \\ \hline \hline
\end{array}\]
\caption{The simple groups $G$ with $\l(G)=3$}
\label{tab:main}
\end{table}
\renewcommand{\arraystretch}{1}

Our goal in this paper is to extend the depth results in \cite{BLS1} in several different directions, both for simple groups, as well as arbitrary finite groups. We also revisit some of the aforementioned results of Janko and Harada from the 1960s, providing a precise description of the simple groups of small length. In turn, this allows us to describe the structure of arbitrary finite groups of small length and we can use this to classify the simple groups $G$ with $\cd(G)=2$, which extends one of the main results in \cite{BWZ}.

\subsection{Main results on depth}

By a theorem of Shareshian and Woodroofe \cite[Theorem 1.4]{SW}, it follows that $\l(G) \geqs 3$ for every insoluble finite group $G$. Our first main result determines all finite groups of depth $3$. In particular, notice that an obvious consequence of the theorem is that almost simple groups of depth $3$ are simple.

\begin{theorem}\label{depth3}
A finite group $G$ has depth $3$ if and only if either $G$ is soluble of chief length $3$, or $G$ is a simple group as in Table $\ref{tab:main}$.
\end{theorem}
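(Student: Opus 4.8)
The plan is to split according to solubility, reducing the statement to two assertions beyond the simple-group classification of \cite{BLS1}: first, that a soluble group $G$ satisfies $\l(G)=3$ precisely when its chief length $c(G)$ equals $3$; and second, that an insoluble group of depth $3$ must be simple, so that Table~\ref{tab:main} together with \cite[Theorem 1]{BLS1} finishes the job. Throughout I write $c(G)$ for the chief length of $G$, and I repeatedly use the consequence of \cite[Theorem 1.4]{SW} that $\l(G)\geqs 3$ for every insoluble group, equivalently that a group possessing a maximal subgroup of prime order is soluble.

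For soluble $G$ I would prove $\l(G)=c(G)$ by induction on $|G|$. The lower bound rests on the claim that every maximal subgroup $M$ of a soluble group satisfies $c(M)\geqs c(G)-1$: setting $K=\mathrm{Core}_G(M)$, the quotient $G/K$ is primitive with self-centralising minimal normal subgroup $V$ complemented by $M/K$, so $c(G/K)=c(M/K)+1$; since the $M$-invariant subgroups of $K$ refine the $G$-invariant ones, the number $c_M(K)$ of $M$-chief factors in $K$ is at least the number $c_G(K)$ of $G$-chief factors in $K$, and adding these relations gives $c(M)\geqs c(G)-1$. Induction then yields $\l(G)=1+\min_M\l(M)\geqs c(G)$. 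For the upper bound I would pick a minimal normal subgroup $N\not\leqs\Phi(G)$; a maximal subgroup $M$ not containing $N$ is then a complement, so $M\cong G/N$ and $\l(G)\leqs 1+\l(M)\leqs 1+c(G/N)=c(G)$. The one genuine subtlety lies in the remaining case, where every minimal normal subgroup is contained in $\Phi(G)$: lifting a minimal chain of $G/N$ succeeds only if the Frattini chief factor $N$ has rank one, so here I would have to verify (by a cohomological, module-theoretic argument) that a minimal normal subgroup inside $\Phi(G)$ cannot have rank $\geqs 2$. Granting $\l(G)=c(G)$, the soluble half of the theorem is immediate.

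Now let $G$ be insoluble with $\l(G)=3$ and fix a minimal chain $G>G_1>G_2>1$. Then $\l(G_1)=2$, so $G_1$ is soluble and $G_2$ is a maximal subgroup of $G_1$ of prime order. The first key step is to show $R(G)=1$. If not, choose a minimal normal subgroup $N\leqs R(G)$, necessarily elementary abelian. As $N\triangleleft G$ we have $NG_1\in\{G_1,G\}$, and $NG_1=G$ would make $G_1\cong G/N$ insoluble; hence $N\leqs G_1$. Since $N\triangleleft G_1$ and $G_2$ is maximal of prime order in $G_1$, either $N=G_2$ or $G_1=N\rtimes G_2$, and in both cases $G_1/N$ has prime order and is maximal in $G/N$. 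But $G/N$ is insoluble, contradicting \cite{SW}. Thus $R(G)=1$, and $\mathrm{soc}(G)=T_1\times\cdots$ is a product of nonabelian simple groups. A minimal normal subgroup $N=T^j$ is insoluble, so $N\not\leqs G_1$ and $G=NG_1$ with $G/N\cong G_1/(G_1\cap N)$ soluble; a second minimal normal subgroup would embed in this soluble quotient and hence be soluble, so $N=\mathrm{soc}(G)$ is the unique minimal normal subgroup and $C_G(N)=1$.

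It remains to exclude $j\geqs 2$ and the proper almost simple case $T=\mathrm{soc}(G)<G$, i.e.\ to prove $\l(G)\geqs 4$ for such $G$. This is the main obstacle. Here the reduction used for $R(G)$ no longer applies, since the relevant soluble maximal subgroup $G_1$ does not contain $N$, and one is instead forced to bound the depth of product-type and almost simple groups from below. I would exploit the embedding $G\leqs\mathrm{Aut}(T)\wr S_j$ together with the rigid shape of $G_1$ forced by $c(G_1)=2$ (namely $G_1=(C_s^a).C_{s'}$ with $C_{s'}$ of prime order) and explicit information on maximal subgroups coming from the Classification, the aim being to show that every maximal subgroup of such a $G$ has depth at least $3$, whence $\l(G)\geqs 4$. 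Once $G$ is reduced to a simple group, \cite[Theorem 1]{BLS1} and Table~\ref{tab:main} pin down exactly the depth-$3$ cases. The converse implications --- that a soluble group with $c(G)=3$ and a simple group in Table~\ref{tab:main} both have depth $3$ --- follow respectively from the soluble lemma and from \cite{BLS1}, completing the proof of Theorem~\ref{depth3}.
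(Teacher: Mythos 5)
Your overall architecture (split by solubility, reduce insoluble depth-$3$ groups to simple ones, then quote \cite[Theorem 1]{BLS1}) is the same as the paper's. The paper's proof, however, consists of two citations --- Kohler's theorem \cite[Theorem 2]{K} for the soluble half and the strong form of Shareshian--Woodroofe \cite[Theorem 1.4]{SW} for the insoluble half --- and in both places where you replace the citation by your own argument there is a genuine gap.

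In the soluble half, your lower bound $\l(G)\geqs {\rm chiefl}(G)$ via primitive quotients is correct, but the upper bound breaks exactly where you flag the ``subtlety'': the statement you propose to verify, namely that a minimal normal subgroup contained in $\Phi(G)$ must have rank one, is \emph{false}. Take $G=(C_4\times C_4)\rtimes C_3$, where $C_3$ acts via a matrix of order $3$ in ${\rm GL}_2(\Z/4\Z)$ whose reduction modulo $2$ acts irreducibly on $(C_2)^2$. Then $N=\Phi(C_4\times C_4)=(C_2)^2$ is a minimal normal subgroup of $G$ of rank $2$, and $N\leqs \Phi(G)$ since $\Phi(C_4\times C_4)\leqs\Phi(G)$. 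Kohler's theorem still holds for this group: ${\rm chiefl}(G)=3$ and $\l(G)=3$, witnessed by the chain $G>(C_2)^2{\rtimes}C_3>C_3>1$. But that chain is not obtained by lifting a chain of $G/N$ and then descending through $N$; the minimal chain ``absorbs'' the rank-$2$ Frattini factor in a single maximal step. So the Frattini case genuinely requires Kohler's argument (or the citation of \cite[Theorem 2]{K}, as in Lemma \ref{kolem}(i)); your proposed cohomological verification cannot succeed because the fact it targets does not hold.

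In the insoluble half, your reduction is correct as far as it goes: using only the weak consequence of \cite{SW} (insoluble groups have depth at least $3$), you cleanly prove $R(G)=1$ and that $G$ has a unique minimal normal subgroup $N=T^j$ with $G/N$ soluble. But the theorem needs $G=N$ and $j=1$, and this final exclusion --- ruling out almost simple groups with $T<G$ and groups with socle $T^j$, $j\geqs 2$ --- is precisely the step you leave as a plan (``I would exploit the embedding $G\leqs {\rm Aut}(T)\wr S_j$\dots'') rather than a proof. It is also precisely the content of the strong form of \cite[Theorem 1.4]{SW}, $\l(G)\geqs {\rm chiefl}(G)+2$ for insoluble $G$, which the paper invokes as Lemma \ref{kolem}(ii): with it, depth $3$ forces chief length $1$, hence simplicity, in one line. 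Carrying out your sketch would amount to re-proving the relevant cases of Shareshian--Woodroofe from CFSG data on maximal subgroups, and nothing in your text does this; as written, even the basic test cases (showing that ${\rm PGL}_2(q)$, ${\rm M}_{10}$, or a group of shape $T^2.2$ cannot have depth $3$) are not settled. So the proposal is incomplete at exactly the point where the proof has its only substantive input beyond \cite{BLS1}.
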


The next result classifies the finite groups of depth $4$. In part (iv) of the statement, a \emph{twisted wreath product} $T \,{\rm twr}_\phi\, S$ for non-abelian simple groups $S,T$ is as defined and studied in \cite{Bad}. The ingredients are a transitive action of $S$ on $k$ points with point stabiliser $S_1$, and a homomorphism $\phi:S_1\to {\rm Aut}(T)$ with image containing ${\rm Inn}(T)$.
Thus $T$ is isomorphic to a proper section of $S$; indeed, the subgroups $C = \phi^{-1}({\rm Inn}(T))$ and $D = {\rm ker}(\phi)\cap C$ satisfy $C/D \cong T$, and $(C,D)$ forms an \emph{$S$-maximal section} of $S$, as defined in \cite[Definition 4.1]{Bad}. Moreover $T \,{\rm twr}_\phi\, S$ is a semidirect product $T^k.S$ having $S$ as a maximal subgroup.

\begin{theorem}\label{depth4}
Suppose $G$ is a finite group of depth $4$. Then one of the following holds, where $p$ is a prime:
\begin{itemize}\addtolength{\itemsep}{0.2\baselineskip}
\item[{\rm (i)}] $G$ is soluble of chief length $4$.
\item[{\rm (ii)}] $G = T\times T$ or $T \times C_p$, where $T$ is simple of depth $3$ (as in Table $\ref{tab:main}$).
\item[{\rm (iii)}] $G = (C_p)^k. T$ (a possibly nonsplit extension), where $T$ is simple of depth $3$, and acts irreducibly on $(C_p)^k$.
\item[{\rm (iv)}] $G = T \,{\rm twr}_\phi\, S$, a twisted wreath product, where $S,T$ are simple, $T$ is a proper section of $S$, and $S$ has depth $3$.
\item[{\rm (v)}] $G$ is quasisimple and $Z(G) = C_p$.
\item[{\rm (vi)}] $G$ is almost simple with socle $T$, and $G/T$ is either $1$ or $C_p$.
\end{itemize}
\end{theorem}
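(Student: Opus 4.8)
The plan is to argue by reduction to maximal subgroups, followed by a case analysis on the socle of $G$. The basic tool is the elementary recursion $\l(G) = 1 + \min\{\l(M) : M \textrm{ maximal in } G\}$, which is immediate from the definition of depth. Since $\l(G)=4$, every maximal subgroup of $G$ has depth at least $3$, and at least one maximal subgroup $M$ has depth exactly $3$; by Theorem \ref{depth3}, such an $M$ is either soluble of chief length $3$ or one of the simple groups in Table \ref{tab:main}. I would first dispose of the soluble case: for soluble groups depth coincides with chief length, so a soluble $G$ of depth $4$ has chief length $4$, giving (i). From now on assume $G$ is insoluble, and split the analysis according to a minimal normal subgroup $N$ of $G$, and more generally according to $\mathrm{Soc}(G)$.

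Suppose first that $G$ has an abelian minimal normal subgroup $N = (C_p)^k$, and set $\bar G = G/N$. Fixing a maximal subgroup $M$ with $\l(M)=3$, I would consider whether $MN=G$. Since $N$ is a minimal normal abelian subgroup, $M\cap N$ is normal in $G=MN$, so $M$ either contains or complements $N$; in the complemented case $M\cong \bar G$, forcing $\bar G$ to be simple of depth $3$ by Theorem \ref{depth3}. Distinguishing a trivial action (which, using submultiplicativity $\l(G)\leqs \l(N)+\l(\bar G)$, forces $k=1$ and yields $T\times C_p$ in (ii)) from a faithful irreducible one (giving $(C_p)^k.T$ in (iii)), and treating the nonsplit configurations separately to obtain the nonsplit case of (iii) and the quasisimple case $Z(G)=C_p$ of (v), would complete this branch. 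Here I would also need to rule out $\bar G$ being insoluble but non-simple, for which the bound $\l(G)=4$ leaves no room for additional normal structure.

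It remains to treat the case where every minimal normal subgroup of $G$ is non-abelian, so $\mathrm{Soc}(G)=T_1\times\cdots\times T_k$ with the $T_i$ non-abelian simple and $G$ permuting the factors. If $k=1$ then $G$ is almost simple with socle $T$; since $\mathrm{Out}(T)$ is soluble, $G/T$ is soluble, and I would show via the recursion together with the known structure of maximal subgroups of almost simple groups that $G/T$ has chief length at most $1$, i.e. $G/T\in\{1,C_p\}$, giving (vi) (the case $G=T$ of a simple group of depth $4$ being the trivial instance). If $k\geqs 2$, then either the factors are individually normal in $G$, which for depth $4$ forces $k=2$ and $G=T\times T$ as in (ii), or $G$ permutes the factors transitively and $\mathrm{Soc}(G)$ is complemented by a simple group $S$ of depth $3$; using the complement $S$ as the required depth-$3$ maximal subgroup gives $\l(G)\leqs 1+\l(S)=4$, and I would recognise this configuration as the twisted wreath product $T\,\mathrm{twr}_\phi\,S$ of (iv).

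The main obstacle will be the non-abelian socle case with $k\geqs 2$, together with the bound $G/T\in\{1,C_p\}$ in the almost simple case. Both require ruling out any maximal subgroup of depth $\leqs 2$, and, more seriously, showing that no configuration other than those listed can simultaneously possess a maximal subgroup of depth exactly $3$ while having all maximal subgroups of depth $\geqs 3$. This rests on an O'Nan--Scott-type analysis of the subgroups supplementing the socle and on detailed information about the maximal subgroups of the depth-$3$ simple groups of Table \ref{tab:main}. Identifying precisely the twisted wreath products among the groups with complemented non-abelian socle, and verifying that extra outer or permutation structure in the remaining cases pushes the depth up to $5$, is where the bulk of the work lies.
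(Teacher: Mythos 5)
Your high-level plan (soluble case via Kohler, then a case analysis on minimal normal subgroups and the socle) resembles the paper's in outline, but it is missing the one tool that makes the paper's proof work: the Shareshian--Woodroofe inequality $\l(G) \geqs {\rm chiefl}(G)+2$ for insoluble $G$ (Lemma \ref{kolem}(ii)). Applied to $\l(G)=4$ this immediately gives ${\rm chiefl}(G) \leqs 2$, so $G$ is either simple or an extension $N.S$ with $N \cong T^k$ minimal normal and $S = G/N$ simple; every subsequent step of the paper's argument exploits this two-step structure. You never invoke this bound, nor prove a substitute, and precisely at the points where it is needed your proposal retreats to assertions: ``the bound $\l(G)=4$ leaves no room for additional normal structure'', ``$G/T$ has chief length at most $1$'', ``extra outer or permutation structure \ldots pushes the depth up to $5$''. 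These are not routine verifications. Ruling out, say, an almost simple $G$ with $|G/T|$ a product of two primes, or a non-abelian socle on whose factors $G$ acts intransitively, or an abelian minimal normal subgroup with $G/N$ insoluble but not simple, amounts to proving special cases of the Shareshian--Woodroofe theorem, which is itself a substantial result. So the heart of the proof is missing from the proposal.

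Two further concrete problems. First, in the non-abelian socle case with $k \geqs 2$ your dichotomy is wrong: if $G$ permutes the factors transitively, it does not follow that ${\rm Soc}(G)$ is complemented by a simple group of depth $3$. The configuration $G = T^p\la \a \ra$ with $\a$ cyclically permuting $p$ copies of $T$ (so $G/{\rm Soc}(G) \cong C_p$) is transitive, admits no such complement, and does not appear in the conclusion of the theorem, so it must be shown not to occur at all. The paper needs a dedicated lemma for exactly this (Lemma \ref{fab}: $\l(H^p\la\a\ra) \geqs \l(H)+2$ for nontrivial soluble $H$), which shows that the only candidate depth-$3$ maximal subgroups, namely the soluble subgroups $H^p.p$ forced by Theorem \ref{depth3}, in fact have depth at least $4$; your proposal has no counterpart to this step, and your claimed dichotomy simply skips over it (as well as over non-simple groups $G/{\rm Soc}(G)$). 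Second, a smaller logical slip: in the abelian case with trivial action you say the bound $\l(G) \leqs \l(N)+\l(\bar G)$ ``forces $k=1$''. An upper bound on $\l(G)$ cannot force $k=1$; what is needed is the lower bound (in fact equality) obtained by iterating Lemma \ref{add}(iii) over central subgroups of order $p$, which gives $\l(G) = k + \l(T)$ and hence $k=1$. The conclusion is correct, but the stated justification points the inequality the wrong way.
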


\begin{remark}\label{r:1}
Let us comment on the groups arising in parts (i)--(iv) of Theorem \ref{depth4}.
\begin{itemize}\addtolength{\itemsep}{0.2\baselineskip}
\item[{\rm (a)}] By a theorem of Kohler \cite[Theorem 2]{K}, the depth of a soluble group is equal to the length of a chief series, so every group arising in part (i) does indeed have depth $4$.
\item[{\rm (b)}] In parts (ii) and (iv), note that $G$ has a simple maximal subgroup of depth $3$. In particular, these groups have depth $4$, and the examples arising in (ii) can be listed by inspecting Table \ref{tab:main}. In (iii), for split extensions this is also the case; for nonsplit, $G$ must have a maximal subgroup $M = (C_p)^k.M_0$ with $M_0$ maximal in $T$ acting irreducibly on $(C_p)^k$ (so $\l(M_0)=2$ and $\l(M)=3$). See Section \ref{ss:exd4} for further comments on the nonsplit groups $G = (C_p)^k.T$ arising in part (iii).
\item[{\rm (c)}] In (iv), the possibilities for $S$ are recorded in Table \ref{tab:main}. Every proper non-abelian simple section $T$ of $S$ can occur. The simple sections of the groups $A_p$ and ${\rm L}_n^\e(q)$ in Table \ref{tab:main} cannot be listed. However, the proper simple sections of the other groups in the table can be determined: for ${\rm L}_2(q)$ they are $A_5$, ${\rm L}_2(q_0)$ (with $q=q_0^k$ and $q_0 \geqs 4$); for $^2\!B_2(q)$ there are none (since $q-1$ is prime); and those for ${\rm M}_{23}$ and $\BM$ can be listed (up to isomorphism) using \cite{Atlas}. 
\end{itemize}
\end{remark}

Next consider case (v) in Theorem \ref{depth4}, so $G$ is quasisimple with $Z(G)=C_p$ ($p$ prime) and let $T=G/Z(G)$, a simple group. Here $\l(G) = \l(T)+1$ (see Lemma \ref{add}(iii)), so $\l(G)=4$ if and only if $T$ is one of the groups in Table \ref{tab:main}. In particular, by considering the Schur multipliers of the relevant simple groups, we obtain the following result.

\begin{theorem}\label{depth4q}
Let $G$ be a quasisimple group with nontrivial centre. Then $\l(G)=4$ if and only if $G$ is one of the groups in Table $\ref{tab:quasi}$.
\end{theorem}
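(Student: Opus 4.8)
The plan is to reduce the statement, via the classification already obtained in Theorem \ref{depth4}, to a computation of Schur multipliers carried out family by family over Table \ref{tab:main}. For the forward direction, suppose $G$ is quasisimple with $Z(G) \ne 1$ and $\l(G) = 4$. A quasisimple group is perfect with simple central quotient, so a direct inspection of Theorem \ref{depth4} shows that only case (v) is consistent with these hypotheses: the groups in (i) are soluble, those in (ii) are either imperfect or have non-simple central quotient, and those in (iii) (with $k \geqs 2$, the action being then non-trivial), (iv) and (vi) are centreless. Thus $Z(G) = C_p$ for some prime $p$. Writing $T = G/Z(G)$, I would then invoke Lemma \ref{add}(iii), which gives $\l(G) = \l(T) + 1$; since $\l(T) \geqs 3$ for every non-abelian simple group, the condition $\l(G) = 4$ is equivalent to $\l(T) = 3$, i.e. to $T$ appearing in Table \ref{tab:main}.

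This reduces the theorem to the following enumeration problem: for each simple $T$ in Table \ref{tab:main}, list the quasisimple groups $G$ with $G/Z(G) \cong T$ and $Z(G) \cong C_p$. The quasisimple covers of $T$ are precisely the quotients $\hat{T}/N$ of the full covering group $\hat{T}$ by subgroups $N$ of the Schur multiplier $M(T) = Z(\hat{T})$, and such a quotient has centre $M(T)/N$; the constraint $Z(G) \cong C_p$ therefore selects the $N$ of prime index, so that $G$ exists exactly for the primes $p$ for which $M(T)$ admits a quotient $C_p$. I would then run through the families: $M(A_p) = C_2$ for every $A_p$ in the table (the exceptional groups $A_6, A_7$ of multiplier $C_6$ being excluded there); $M({\rm L}_2(q)) = C_2$ for $q$ odd, and trivial for $q$ even, the sole exception ${\rm L}_2(4) \cong A_5 \cong {\rm L}_2(5)$ already being covered by the odd case; $M({\rm L}_n^\e(q)) = C_{(n,q-\e)}$, which since $n$ is prime is either trivial or $C_n$ once the four exceptional triples $(3,4,+),(3,3,-),(3,5,-),(5,2,-)$ excluded in the table are set aside; $M({\rm M}_{23}) = 1$, contributing nothing; and $M(\mathbb{B}) = C_2$. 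The resulting covers $2.A_p$, ${\rm SL}_2(q)$, ${\rm SL}_n^\e(q)$ and $2.\mathbb{B}$ then populate Table \ref{tab:quasi}. The converse is immediate: each $G$ so obtained has $Z(G) = C_p$ and $G/Z(G)$ in Table \ref{tab:main}, whence $\l(G) = \l(T) + 1 = 4$ by Lemma \ref{add}(iii).

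I expect the Suzuki family ${}^2B_2(q)$ to be the main obstacle. For $q > 8$ the multiplier is trivial and these groups contribute nothing, but $M({}^2B_2(8)) = C_2 \times C_2$ is exceptional, and since $8 - 1 = 7$ is prime the group ${}^2B_2(8)$ does lie in Table \ref{tab:main}. The three index-$2$ subgroups of $M = C_2 \times C_2$ give a priori three central extensions with centre $C_2$, and the delicate point is to decide how many are non-isomorphic as abstract groups: here one must track the action of ${\rm Out}({}^2B_2(8)) = C_3$ on $M$, which permutes the three subgroups cyclically and hence identifies the three covers, leaving a single quasisimple group $2.{}^2B_2(8)$. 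The only remaining care needed is to confirm, for the infinite families, that no member listed in Table \ref{tab:main} coincides with one of the sporadically large multiplier cases — which is exactly what the stated exclusions (in particular $q \ne 9$ for ${\rm L}_2(q)$ and the four triples for ${\rm L}_n^\e(q)$) are designed to guarantee.
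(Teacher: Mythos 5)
Your proposal follows the paper's proof exactly: the paper likewise invokes Theorem \ref{depth4} to conclude $Z(G)=C_p$, applies Lemma \ref{add}(iii) to reduce the condition $\l(G)=4$ to $\l(T)=3$, i.e.\ to $T$ appearing in Table \ref{tab:main}, and then simply cites \cite[Theorem 5.1.4]{KL} for the Schur multiplier enumeration that you carry out family by family. So the structure and both key ingredients coincide; you merely supply more detail than the paper does.

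One factual slip in that detail is worth flagging. You assert that, once the four excluded triples are set aside, $M({\rm L}_n^{\e}(q))$ equals the generic value $C_{(n,q-\e)}$ for every ${\rm L}_n^{\e}(q)$ in Table \ref{tab:main}. This is false for $(n,q,\e)=(3,2,+)$: the group ${\rm L}_3(2)$ does lie in Table \ref{tab:main} (here $\frac{2^3-1}{(2-1)(3,1)}=7$ is prime, and $(3,2,+)$ is not among the exclusions), yet it has exceptional multiplier $C_2$ rather than the generic trivial one. The four exclusions in Table \ref{tab:main} are there for depth reasons, not multiplier reasons, so they cannot be relied upon to sweep away the exceptional-multiplier cases. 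Taken literally, your enumeration of the ${\rm L}_n^{\e}(q)$ row would therefore miss the quasisimple group $2.{\rm L}_3(2)$. The conclusion survives only because of the isomorphism ${\rm L}_3(2)\cong {\rm L}_2(7)$: this cover is ${\rm SL}_2(7)$, which your ${\rm L}_2(q)$ row already produces (since $(7-1)/2=3$ is prime), so Table \ref{tab:quasi} remains complete. The fix is simply to list the exceptional multipliers correctly (among the groups ${\rm L}_n^{\e}(q)$ in Table \ref{tab:main} with $n$ prime, ${\rm L}_3(2)$ is the only one) and invoke this isomorphism. Incidentally, your care over whether the three double covers of ${}^2B_2(8)$ are pairwise isomorphic (they are, being permuted by ${\rm Out}({}^2B_2(8))=C_3$) is correct but not needed: the entry $2.{}^2B_2(8)$ in Table \ref{tab:quasi}, and the truth of the theorem, do not depend on counting isomorphism classes of covers.
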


\renewcommand{\arraystretch}{1.1}
\begin{table}[h]
\[\begin{array}{ll} \hline\hline
G & \hspace{5mm} \mbox{Conditions} \\ \hline
2.A_p & \hspace{5mm} \mbox{$p$ and $(p-1)/2$ prime, $p \not\in \{7,11,23\}$} \\
{\rm SL}_{n}^{\e}(q) & \hspace{5mm} \mbox{$n = (n,q-\e)$ and $\frac{q^n-\e}{n(q-\e)}$ both prime, $n \geqs 3$ and} \\
& \hspace{5mm} (n,q,\e) \ne (3,4,+), (3,5,-) \\
{\rm SL}_{2}(q) & \left\{\begin{array}{l}
\mbox{$(q+1)/2$ or $(q-1)/2$ prime, $q \ne 9$; or} \\
\mbox{$q$ prime and $q \equiv \pm 3, \pm 13 \imod{40}$; or} \\
\mbox{$q=3^k$ with $k \geqs 3$ prime}
\end{array}\right. \\
2.{}^2B_2(8), \, 2.\mathbb{B} & \\ \hline\hline
\end{array}\]
\caption{The quasisimple groups $G$ with $Z(G) \ne 1$ and $\l(G)=4$}
\label{tab:quasi}
\end{table}
\renewcommand{\arraystretch}{1}

The next result sheds further light on the almost simple groups of depth $4$ arising in part (vi) of Theorem \ref{depth4}.

\begin{theorem}\label{asdepth4}
Suppose $G$ is an almost simple group of depth $4$. Then one of the following holds, where $T$ is the socle of $G$:
\begin{itemize}\addtolength{\itemsep}{0.2\baselineskip}
\item[{\rm (i)}] $G/T = C_p$, $p$ prime and $\l(T)=3$;
\item[{\rm (ii)}] $(G,T)$ is one of the cases in Table $\ref{tab:as1}$ (in each case, $\l(T)=4$);
\item[{\rm (iii)}] $G=T$ has a soluble maximal subgroup $M$ of chief length $3$, and 
$(G,M)$ is one of the cases in Table $\ref{tab:as2}$;
\item[{\rm (iv)}] $G=T$ has a simple maximal subgroup of depth $3$.
\end{itemize}
Moreover, all of the groups arising in cases (i), (ii) and (iii) have depth $4$.
\end{theorem}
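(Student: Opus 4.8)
The plan is to combine the structural reduction already provided by Theorem~\ref{depth4} with the elementary recursion
\[
\l(G) = 1 + \min\{\l(M) : M \text{ a maximal subgroup of } G\},
\]
together with the classification of depth-$3$ groups in Theorem~\ref{depth3}. Since $G$ is almost simple of depth $4$, Theorem~\ref{depth4}(vi) tells us that $G/T$ is either trivial or $C_p$ for some prime $p$, so from the outset we may split into the simple case $G=T$ and the proper case $G/T=C_p$. In either case the recursion shows that $\l(G)=4$ is equivalent to two conditions: (a) no maximal subgroup of $G$ has depth $\leqs 2$, and (b) at least one maximal subgroup $M$ has depth exactly $3$. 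By Theorem~\ref{depth3}, any such $M$ is either soluble of chief length $3$ or a simple group from Table~\ref{tab:main}, and this dichotomy is what ultimately produces the four cases in the statement.

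First I would treat the proper case $G/T=C_p$. Here $T$ is itself a maximal subgroup, so $\l(T)\geqs 3$, and the chain $G>T>\cdots>1$ gives $\l(G)\leqs 1+\l(T)$, whence $\l(T)\in\{3,4\}$. If $\l(T)=3$ then $\l(G)\leqs 4$, while $\l(G)\geqs 4$ because a proper almost simple group is insoluble but not simple, hence by Theorem~\ref{depth3} (and the remark that almost simple groups of depth $3$ are simple) cannot have depth $3$; this is case (i) and simultaneously verifies the ``moreover'' assertion there. If instead $\l(T)=4$, then condition (b) forces a maximal subgroup $M\ne T$ of depth $3$. As $T$ is maximal and $M\ne T$ we have $T\not\leqs M$, so $MT=G$ and $M/(M\cap T)\cong C_p$; thus $M$ has a normal subgroup of index $p$ and so cannot be non-abelian simple, whence by Theorem~\ref{depth3} it is soluble of chief length $3$. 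The existence of such an $M$ again gives $\l(G)=4$ by the same upper/lower bound argument, and cataloguing the resulting configurations yields Table~\ref{tab:as1} and case (ii).

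Next I would treat the simple case $G=T$ with $\l(T)=4$. Condition (b) supplies a maximal subgroup $M$ of depth $3$, which by Theorem~\ref{depth3} is either soluble of chief length $3$, giving case (iii) and an entry of Table~\ref{tab:as2}, or a simple group from Table~\ref{tab:main}, giving case (iv). The two tables are populated by running through the maximal subgroups of the simple groups $T$: using the classification of finite simple groups together with Aschbacher's theorem and the known lists of maximal subgroups of the classical, exceptional, alternating and sporadic groups, one identifies the soluble maximal subgroups of chief length $3$ and the simple maximal subgroups lying in Table~\ref{tab:main}, and one checks in each case that condition (a) holds, that is, that $T$ has no soluble maximal subgroup of chief length at most $2$ (insoluble maximal subgroups automatically have depth $\geqs 3$ by Shareshian and Woodroofe \cite{SW}). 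This last check is exactly what forces $\l(T)=4$ rather than $3$, and hence secures the ``moreover'' assertion for case (iii).

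The main obstacle is this final step: verifying condition (a) and producing the complete lists in Tables~\ref{tab:as1} and~\ref{tab:as2} requires a detailed, family-by-family examination of the maximal subgroups of the finite simple groups. The delicate points are to rule out maximal subgroups of depth $1$ or $2$ in the relevant groups $T$, and, in the proper case with $\l(T)=4$, to pin down precisely when $T$ admits a chief-length-$3$ soluble subgroup that extends to a maximal subgroup of $G=T.C_p$ while $T$ itself still carries a depth-$3$ maximal subgroup (so that $\l(T)=4$). By contrast, case (iv) is only a necessary condition and is deliberately excluded from the ``moreover'' statement: a simple group possessing a simple depth-$3$ maximal subgroup may itself appear in Table~\ref{tab:main}, and so have depth $3$ rather than $4$, so no converse can be asserted for (iv).
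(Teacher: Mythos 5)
You follow essentially the same route as the paper: reduce to $G/T \in \{1, C_p\}$ via Theorem \ref{depth4}(vi), analyse a depth-$3$ maximal subgroup $M$ through Theorem \ref{depth3}, and, when $G = T.p$, use the normal subgroup $M \cap T \triangleleft M$ of index $p$ to force $M$ to be soluble of chief length $3$. However, your treatment of the case $G \ne T$ contains a genuine non sequitur: from $\l(T)\geqs 3$ and $\l(G)\leqs 1+\l(T)$ you conclude ``whence $\l(T)\in\{3,4\}$''. With $\l(G)=4$, the inequality $\l(G)\leqs 1+\l(T)$ yields only the lower bound $\l(T)\geqs 3$; it places no upper bound whatsoever on $\l(T)$, and a priori one could have $\l(T)\geqs 5$ while $G=T.p$ still has depth $4$ through a soluble maximal subgroup not containing $T$. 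The correct dichotomy is $\l(T)=3$ (case (i)) versus $\l(T)\geqs 4$; your subsequent argument survives this change verbatim, since it only uses $\l(T)>3$ to ensure $M\ne T$. But the assertion $\l(T)=4$ in case (ii) is then not automatic: in the paper it is a \emph{conclusion} of the classification, verified entry by entry in Table \ref{tab:as1} --- for instance, for $G={\rm PGL}_{2}(q)$ the condition $\O(q\pm 1)\geqs 3$ is imposed precisely to force $\l(T)\geqs 4$, whence $\l(T)=4$ by \cite[Lemma 3.1]{BLS1}.

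A second, related point concerns the ``cataloguing'' you defer as the main obstacle: the paper pushes the structural analysis one step further than you do, and this extra step is what makes the case analysis tractable. Since $M$ is soluble of chief length $3$ and $M\cap T\triangleleft M$ has index $p$, Lemma \ref{kolem}(i) gives $\l(M\cap T)=2$; as $\l(T)\geqs 4$, the subgroup $M\cap T$ cannot be maximal in $T$. Hence $M$ is a \emph{novelty} maximal subgroup of $G$, and it is this highly restrictive condition --- not mere solubility --- that the paper exploits: it permits the appeal to \cite{LPS1} for the symmetric groups, disposes of the sporadic and exceptional groups quickly (the groups ${}^2B_2(q)$, ${}^2G_2(q)$, ${}^2F_4(q)$ and ${}^3D_4(q)$ have no novelty maximal subgroups at all), and keeps the classical case manageable via \cite{BHR} and \cite{KL}. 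Without this observation, your family-by-family examination would have to survey all soluble maximal subgroups of chief length $3$ in all groups $T.p$, a substantially heavier task. The remainder of your argument (the recursion $\l(G)=1+\min_M\l(M)$, the ``moreover'' verifications for (i)--(iii), and the observation that no converse is claimed for (iv)) agrees with the paper.
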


In Table \ref{tab:as2}, the required conditions when $G = {\rm L}_{2}(q)$ and $q$ is odd are rather complicated to state (mainly due to the fact that we need $\l(G) \ne 3$). To simplify the presentation of the table, we refer to the following conditions on $q$ (recall that $\O(n)$ denotes the number of prime divisors of $n$, counting multiplicities):
\begin{equation}\label{cond}
\left\{\begin{array}{l}
\mbox{$\O(q \pm 1) \geqs 3$} \\
\mbox{$q \not\equiv \pm 3, \pm 13 \imod{40}$ if $q$ is prime} \\
\mbox{$q \ne 3^k$ with $k \geqs 3$ prime.}
\end{array}\right.
\end{equation}

We refer the reader to Section \ref{ss:as} for further details on the simple groups that arise in part (iv) of Theorem \ref{asdepth4}. It is worth noting that there exist simple groups of depth $3$ with a simple maximal subgroup of depth $3$. For instance, $A_5$ is a maximal subgroup of ${\rm L}_{2}(11)$, and both groups have depth $3$ (see Table \ref{tab:main}). Similarly, ${\rm L}_{3}(3) < \BM$ is another example.

\renewcommand{\arraystretch}{1.1}
\begin{table}[h]
\[\begin{array}{lll} \hline\hline
T & G & \mbox{Conditions} \\ \hline
A_6 & {\rm PGL}_{2}(9), \, {\rm M}_{10} & \\
A_7,\, A_{11}, \, A_{23} & S_7, \, S_{11}, \, S_{23} & \\
{\rm L}_{2}(q) & {\rm PGL}_{2}(q) & \mbox{$q$ prime, $q \equiv \pm 11, \pm 19 \imod{40}$, $\O(q \pm 1) \geqs 3$} \\
{\rm L}_{3}(4) & {\rm PGL}_{3}(4) & \\
{\rm U}_{3}(5) & {\rm PGU}_{3}(5) & \\ \hline\hline
\end{array}\]
\caption{The almost simple groups $G=T.p$ with $\l(G)=\l(T)=4$}
\label{tab:as1}
\end{table}
\renewcommand{\arraystretch}{1}

\renewcommand{\arraystretch}{1.1}
\begin{table}[h]
\[\begin{array}{lll} \hline\hline
G & M & \mbox{Conditions} \\ \hline
A_p & p{:}((p-1)/2) & \mbox{$p$ prime, $\O(p-1)=3$} \\
A_6 & S_4, \, 3^2{:}4 & \\
{\rm L}_{2}(q) & \mathbb{F}_q{:}((q-1)/2), \, D_{q-1} & \mbox{$q$ odd, $\O(q-1)=3$, \eqref{cond} holds}
\\
& D_{q+1} & \mbox{$q$ odd, $\O(q+1)=3$, \eqref{cond} holds} \\
& S_4 & \mbox{$q$ prime, $q \equiv \pm 1 \imod{8}$, \eqref{cond} holds} \\
& \mathbb{F}_q{:}(q-1), \, D_{2(q-1)} & \mbox{$q$ even, $\O(q-1)=2$, $\O(q+1) \geqs 2$} \\
 & D_{2(q+1)} & \mbox{$q$ even, $\O(q+1)=2$, $\O(q-1) \geqs 2$} \\
{\rm L}_{3}^{\e}(q) & (C_{q-\e})^2{:}S_3 & \mbox{$q \geqs 8$ even, $q-\e$ prime, $\O(q^2+\e q+1) \geqs 2$} \\
{\rm L}_{n}^{\e}(q) & \left(\frac{q^n-\e}{(q-\e)(n,q-\e)}\right)\!{:}n & \mbox{$n \geqs 3$ prime,
$\O(\frac{q^n-\e}{(q-\e)(n,q-\e)}) = 2$} \\
{}^2B_2(q) & D_{2(q-1)} & \O(q-1)=2 \\
& (q\pm \sqrt{2q}+1){:}4 & \mbox{$q\pm \sqrt{2q}+1$ prime, $\O(q-1) \geqs 2$} \\
{}^2G_2(q) & (q\pm \sqrt{3q}+1){:}6 & \mbox{$q\pm \sqrt{3q}+1$ prime, $q>3$} \\
{}^3D_4(q) & (q^4-q^2+1){:}4 & \mbox{$q^4-q^2+1$ prime} \\
{\rm J}_{1} & 7{:}6, \, 11{:}10, \, 19{:}6, \, 2^3{:}7{:}3 & \\
{\rm J}_{4} & 43{:}14 & \\
{\rm Ly} & 67{:}22 & \\
{\rm Fi}_{24}' & 29{:}14 & \\
{\rm Th} & 31{:}15 & \\ \hline \hline
\end{array}\]
\caption{The simple groups $G$ of depth $4$ with a soluble maximal subgroup $M$ of depth $3$}
\label{tab:as2}
\end{table}
\renewcommand{\arraystretch}{1}

\subsection{Main results on length}

Next we turn to our main results on the lengths of finite groups. Recall that the finite simple groups of small length were studied by Janko and Harada in the 1960s, beginning with \cite{Jan}, which classifies the simple groups of length $4$ (since $\l(G) \geqs 3$, Iwasawa's theorem implies that  $l(G) \geqs 4$ for every non-abelian simple group $G$). In a second paper \cite{Jan2}, Janko describes the simple groups of length $5$ and this was extended by Harada \cite{Harada} to length at most $7$. In both papers, the main results state that either $G = {\rm L}_{2}(q)$ for some unspecified prime powers $q$, or $G$ belongs to a short list of specific groups. Later work by Cameron, Solomon and Turull \cite{CST} gives the exact length of all alternating and sporadic groups, and several strong results on the lengths of simple groups of Lie type are presented in the series of papers 
\cite{SST, ST, ST2} from the early 1990s. We refer the reader to the start of Section \ref{sec:length} for further details.

Our next result extends the earlier work in \cite{Jan, Jan2, Harada} by giving a precise classification of the simple groups of length at most $9$. Of course, it should be noted that our proof relies on the Classification of Finite Simple Groups, which was not available to Janko or Harada.

\begin{theorem}\label{t:length}
Let $G$ be a non-abelian finite simple group. Then $l(G) \leqs 9$ if and only if $G$ is one of the groups recorded in Table $\ref{tab:length}$, where $p$ is a prime number.
\end{theorem}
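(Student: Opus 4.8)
The plan is to reduce the classification to a manageable, family-by-family analysis using the Classification of Finite Simple Groups, exploiting the basic monotonicity principle that $l(M) \le l(G) - 1$ for every maximal subgroup $M$ of $G$, so that $l(G) \ge 1 + \max_M l(M)$. This gives a powerful lower bound: to show $l(G) \ge 10$ it suffices to exhibit a maximal subgroup $M$ with $l(M) \ge 9$, and iterating down a chain of maximal subgroups quickly forces large length once $|G|$ has many prime factors. Conversely, the upper bound $l(G) \le 9$ is established by producing an explicit unrefinable chain of length at most $9$, which one controls by choosing maximal subgroups whose successive orders shed prime factors efficiently. First I would recall the key structural input of Iwasawa's theorem and the formula $l(G) \ge \O(|G|)$ is \emph{false} in general (length can exceed $\O(|G|)$), but note the cruder bound $l(G) \ge \O(|G|)$ does hold in one direction via unrefinable chains through a Sylow tower only for soluble $G$; for simple $G$ I would instead use the precise length computations already available in the literature.

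The core of the argument proceeds by the CFSG case division. For the alternating groups, I would invoke the exact values of $l(A_n)$ computed by Cameron, Solomon and Turull \cite{CST}, whose formula grows with $n$, to pin down precisely which $n$ satisfy $l(A_n) \le 9$. For the sporadic groups, the exact length of each is known (again from \cite{CST} and the character-table data in \cite{Atlas}), so this is a finite check yielding the sporadic entries of Table \ref{tab:length}. The substantial work is the groups of Lie type, where I would lean on the detailed estimates of Seitz, Solomon and Turull \cite{SST,ST,ST2}. The strategy there is to bound $l(G)$ below by passing to a maximal parabolic or a maximal torus normaliser and summing the length contributions, using that $l(G)$ for a group of Lie type of rank $r$ over $\F$ grows roughly linearly in the rank and with $\O(q)$-type quantities in $q$. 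This isolates the bounded-rank, restricted-$q$ cases as the only survivors, with ${\rm L}_2(q)$ (for suitable primes or prime powers $q$) forming the main infinite family, exactly as in the earlier work of Janko and Harada.

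The hard part, and the main obstacle, will be the ${\rm L}_2(q)$ analysis and the delineation of precisely which $q$ give $l({\rm L}_2(q)) \le 9$. Here the length depends sensitively on the arithmetic of $q$ through the factorisations of $q$, $q-1$ and $q+1$: a maximal chain descends from ${\rm L}_2(q)$ through a Borel subgroup $\F{:}((q-1)/(2,q-1))$, a dihedral normaliser of a torus, or the subfield/exceptional subgroups ($A_4$, $S_4$, $A_5$), and the length is governed by $\O$ of the relevant torus orders together with the contribution of the field automorphism structure when $q = p^f$. Controlling this requires the careful bookkeeping of maximal subgroups of ${\rm L}_2(q)$ (Dickson's classification) and an exact length formula in terms of $\O(q-1)$, $\O(q+1)$ and $\O(f)$, then solving the resulting inequality $l({\rm L}_2(q)) \le 9$. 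I expect this to produce a condition of the shape ``$\O(q\pm 1)$ small together with bounded $\O(f)$'', matching the entries in Table \ref{tab:length}; verifying that no sporadic families of Lie type of rank $\ge 3$ slip through (by confirming their length already exceeds $9$ for all $q$) is the final, mostly computational, loose end to close.
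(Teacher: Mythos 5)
Your overall plan---CFSG case division, exact lengths of alternating and sporadic groups from \cite{CST}, Borel/parabolic lower bounds together with \cite{SST,ST,ST2} for the groups of Lie type, and an exact arithmetic formula for $l({\rm L}_{2}(q))$ whose inequality $l \leqs 9$ is then solved---is essentially the route the paper takes. The paper saves part of this labour by first quoting Harada's pre-CFSG classification (Theorem \ref{t:har}, from \cite{Harada}) of the simple groups of length at most $7$, so the full family-by-family analysis is only carried out for lengths $8$ and $9$; and its ${\rm L}_{2}(q)$ formula (Lemma \ref{l:easy}) reads $l(G)=\max\{\O(q-1)+f,\,\O(q+1)+1\}$ for $q=p^f$ odd (finitely many exceptions), so the resulting conditions involve $f$ itself (forcing $f \in \{1,2,3,5\}$ for $p \geqs 7$), not $\O(f)$ as you predict---the $f$ comes from the elementary abelian unipotent radical of the Borel subgroup.

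There is, however, a genuine logical error in your stated strategy for the upper bounds. You write that ``the upper bound $l(G) \leqs 9$ is established by producing an explicit unrefinable chain of length at most $9$''. This is backwards: $l(G)$ is the \emph{maximal} length of an unrefinable chain, so exhibiting a single chain of length at most $9$ gives no upper bound on $l(G)$ whatsoever (exhibiting chains yields \emph{lower} bounds on $l(G)$, and upper bounds only on the depth $\l(G)$). To prove $l(G) \leqs 9$ one must control \emph{all} unrefinable chains, for instance via the recursion $l(G) = 1 + \max_M l(M)$ taken over all maximal subgroups $M$ of $G$ (which requires knowledge of every class of maximal subgroups, not a favourable choice of one), or via the exact length formulas of \cite{CST,ST,ST2} and Lemma \ref{l:easy}; this is precisely what the paper does. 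Since you also invoke those exact computations, your plan is repairable, but the step as you describe it would fail. A related slip: your parenthetical ``length can exceed $\O(|G|)$'' is false---every unrefinable chain has length at most $\O(|G|)$, since each index contributes at least one prime factor; the correct statement is that $l(G) = \O(|G|)$ holds for soluble $G$ (Iwasawa), while for simple $G$ one typically has the strict inequality $l(G) < \O(|G|)$, which is the sense in which $l(G) \geqs \O(|G|)$ fails in general.
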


\renewcommand{\arraystretch}{1.1}
\begin{table}[h]
\[\begin{array}{cll} \hline\hline
l(G) & G & \mbox{Conditions} \\ \hline
4 & A_5, \, {\rm L}_{2}(q) & \mbox{$q=p>5$, $\max\{\O(q \pm 1)\}=3$ and $q \equiv \pm 3, \pm 13 \imod{40}$} \\
5 & {\rm L}_{2}(q) & \mbox{$q \in \{ 7, 8, 9, 11, 19, 27, 29 \}$, or $q=p$ and $\max\{\Omega( q \pm 1) \} = 4$} \\
6 & A_7,  {\rm J}_1,  {\rm L}_{2}(q) & \mbox{$q \in \{25,125\}$, or $q=p$ and $\max\{\Omega( q \pm 1) \} = 5$} \\
7 & {\rm M}_{11}, {\rm U}_{3}(3),  {\rm U}_{3}(5) & \\
& {\rm L}_{2}(q) & \mbox{$q \in \{16, 32, 49, 121, 169\}$, or $q=p$ and $\max\{\O(q \pm 1)\} = 6$, or} \\
&  & \mbox{$q=p^3$, $\O(q-1)=4$ and $\O(q+1) \leqs 6$} \\
8 & {\rm M}_{12},  {}^2B_2(8), {\rm L}_{3}(3) & \\
& {\rm L}_{2}(q) & \mbox{$q=p$ and $\max\{\O(q \pm 1)\} = 7$, or} \\
& & \mbox{$q=p^2$, $\O(q-1)=6$ and $\O(q+1) \leqs 7$, or} \\
& & \mbox{$q=p^3$, $\O(q-1)=5$ and $\O(q+1) \leqs 7$, or} \\
& &  \mbox{$q=p^3$, $\O(q-1) \leqs 4$ and $\O(q+1) = 7$, or} \\
& & \mbox{$q=p^5$, $\O(q-1)=3$ and $\O(q+1) \leqs 7$} \\
9 & A_8, {\rm U}_{4}(2), {\rm L}_{3}(4) & \\
& {\rm U}_{3}(q) & \mbox{$q \in \{4,11,13,29\}$, or $q=p$, $\O(q \pm 1) = 3$, $\O(q^2-q+1) \leqs 8$, } \\
& & \mbox{$q \equiv 2 \imod{3}$ and $q \equiv \pm 3, \pm 13 \imod{40}$} \\
& {\rm L}_{2}(q) & \mbox{$q \in \{81,128,2187\}$, or $q=p$ and $\max\{\O(q \pm 1)\} = 8$, or} \\
& & \mbox{$q=p^2$, $\O(q-1) =7$ and $\O(q+1) \leqs 8$, or} \\
& & \mbox{$q=p^2$, $\O(q-1) =6$ and $\O(q+1) = 8$, or} \\
& & \mbox{$q=p^3$, $\O(q-1) = 6$ and $\O(q+1) \leqs 8$, or} \\
& & \mbox{$q=p^3$, $\O(q-1) \leqs 5$ and $\O(q+1) = 8$, or} \\
& & \mbox{$q=p^5$, $\O(q-1) = 4$ and $\O(q+1) \leqs 8$, or} \\
& & \mbox{$q=p^5$, $\O(q-1) = 3$ and $\O(q+1) = 8$} \\ \hline\hline
\end{array}\]
\caption{The simple groups $G$ of length at most $9$}
\label{tab:length}
\end{table}
\renewcommand{\arraystretch}{1}

The proof of Theorem \ref{t:length} is given in Section \ref{s:len}, together with the proof of the following corollary, which describes the structure of finite groups of small length. Recall that soluble groups $G$ have length $\O(|G|)$, so we focus on insoluble groups.

\begin{corol}\label{c:small}
Let $G$ be a finite insoluble group, in which case $l(G) \geqs 4$.
\begin{itemize}\addtolength{\itemsep}{0.2\baselineskip}
\item[{\rm (i)}] $l(G)=4$ if and only if $G$ is simple as in line $1$ of Table $\ref{tab:length}$.
\item[{\rm (ii)}] $l(G)=5$ if and only if one of the following holds:

\vspace{1mm}

\begin{itemize}\addtolength{\itemsep}{0.2\baselineskip}
\item[{\rm (a)}] $G$ is simple as in line $2$ of Table $\ref{tab:length}$; or
\item[{\rm (b)}] $G = T \times C_p$ with $T$ simple of length $4$ (as in Table $\ref{tab:length}$) and $p$ a prime; or
\item[{\rm (c)}] $G = {\rm SL}_{2}(q)$ or ${\rm PGL}_{2}(q)$, and either $q=5$, or $q>5$ is a prime such that $\max\{\O(q \pm 1)\}=3$ and $q \equiv \pm 3, \pm 13 \imod{40}$.
\end{itemize}
\item[{\rm (iii)}] $l(G)=6$ if and only if one of the following holds:

\vspace{1mm}

\begin{itemize}\addtolength{\itemsep}{0.2\baselineskip}
\item[{\rm (a)}] $G$ is simple as in line $3$ of Table $\ref{tab:length}$; or
\item[{\rm (b)}] $G = T \times C_p$, or a quasisimple group $p.T$, or an almost simple group $T.p$, where $T$ is simple of length $5$ (as in Table $\ref{tab:length}$) and $p$ a prime; the quasisimple groups occurring are ${\rm SL}_2(q)$, $3.{\rm L}_2(9)$, and the almost simple groups are ${\rm PGL}_2(q)$, ${\rm M}_{10}$, $S_6$, ${\rm L}_2(8).3$ and ${\rm L}_2(27).3$; or
\item[{\rm (c)}] $G={\rm L}_2(q) \times (p.r)$, $({\rm L}_2(q)\times p).2$, ${\rm SL}_2(q) \times p$ or $2.{\rm L}_2(q).2$, where $p,r$ are primes and ${\rm L}_2(q)$ has length $4$, as in Table $\ref{tab:length}$.
\end{itemize}
\end{itemize}
\end{corol}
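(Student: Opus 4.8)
The plan is to reduce everything to the classification of simple groups in Theorem~\ref{t:length} by exploiting the additivity of length over normal subgroups. Recall that $l(G)=l(N)+l(G/N)$ for every $N \normeq G$, so that $l(G)$ equals the sum of the lengths of the composition factors of $G$, where an abelian (prime order) factor contributes $1$ and a non-abelian simple factor $T$ contributes $l(T) \geqs 4$. Since two non-abelian composition factors would force $l(G) \geqs 8$, an insoluble group $G$ with $l(G) \leqs 6$ has a \emph{unique} non-abelian composition factor $T$, and writing $c = \Omega(|G|/|T|)$ for the number of abelian factors we obtain $l(G) = l(T) + c$ with $l(T) \in \{4,5,6\}$. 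This immediately gives the trichotomy $(l(T),c) = (4,0)$ for part~(i), $(l(T),c)\in\{(5,0),(4,1)\}$ for part~(ii), and $(l(T),c)\in\{(6,0),(5,1),(4,2)\}$ for part~(iii); in each case the admissible $T$ are read off from the relevant line of Table~\ref{tab:length}.

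When $c=0$ there is nothing to build and $G=T$ is simple, giving cases (i), (ii)(a) and (iii)(a). The heart of the argument is to determine, for $c \geqs 1$, all the ways of assembling $G$ from its one non-abelian factor $T$ and its $c$ abelian factors. I would organise this through the soluble radical $R=R(G)$ and the layer: since $T$ is the unique non-abelian factor, $G/R$ is almost simple with socle $T$, so $l(G) = \Omega(|R|) + l(T) + \Omega(|(G/R)/T|)$, where the three summands measure the abelian factors lying \emph{below} the socle (including the centre of the quasisimple cover), the socle itself, and the \emph{outer} part $(G/R)/T \leqs \mathrm{Out}(T)$. For $c=1$ exactly one prime is available, and it can sit in only one of these three places, yielding the direct product $T \times C_p$, a quasisimple cover $p.T$ (possible precisely when $p$ divides the Schur multiplier $M(T)$), or an almost simple extension $T.p$ (possible precisely when $\mathrm{Out}(T)$ has an element of order $p$). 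Listing these for the length-$4$ groups, where $\mathrm{Out}(T)=M(T)=C_2$, gives (ii)(b),(c); for the length-$5$ groups the specific covers and almost simple groups in (iii)(b) are obtained by inspecting $\mathrm{Out}(T)$ and $M(T)$ via \cite{Atlas}. For instance $A_6={\rm L}_2(9)$ contributes $2.A_6={\rm SL}_2(9)$, $3.A_6=3.{\rm L}_2(9)$ and the three index-$2$ groups ${\rm PGL}_2(9),S_6,{\rm M}_{10}$, while ${\rm L}_2(8)$ and ${\rm L}_2(27)$ contribute the field-automorphism extensions ${\rm L}_2(8).3$ and ${\rm L}_2(27).3$.

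For the hardest case $c=2$, which forces $l(T)=4$ and hence $T={\rm L}_2(q)$ with $\mathrm{Out}(T)=M(T)=C_2$, I would distribute two primes among the same three locations. Since the central location is bounded by $M(T)=C_2$ and the outer location by $\mathrm{Out}(T)=C_2$, each contributing at most one factor, the only feasible distributions are: both primes forming a soluble direct factor, giving $G={\rm L}_2(q)\times(p.r)$; one central and one direct, giving ${\rm SL}_2(q)\times p$; one outer and one direct, giving $({\rm L}_2(q)\times p).2$; and one central and one outer, giving $2.{\rm L}_2(q).2$. A point needing care here is to exclude an irreducible module $(C_p)^2$ sitting below the socle, which is handled by noting that ${\rm L}_2(p)$ has no nontrivial $2$-dimensional module over $\mathbb{F}_p$, so a second factor below the socle can only be central or a direct summand. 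Finally, the reverse implications are immediate: by additivity each group in the lists has exactly the claimed length, once one checks using \cite{Atlas} and standard facts on automorphisms and Schur multipliers that the relevant extensions genuinely exist.

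The main obstacle, then, is the forward enumeration in the cases $c \geqs 1$: translating the purely numerical constraint $l(T)+c = l(G)$ into a complete and non-redundant list of isomorphism types of $G$. This is where one must combine the structural reduction via $R(G)$ with precise information on $\mathrm{Out}(T)$ and $M(T)$ for the small simple groups $T$, and rule out the stray module-type extensions; the bookkeeping is most delicate in the $c=2$ case of part~(iii).
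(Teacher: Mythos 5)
Your overall strategy is sound and reaches the correct lists: additivity of length isolates a unique non-abelian composition factor $T$ with $l(T)+c=l(G)$, and your structural analysis via the soluble radical (with $G/R(G)$ almost simple with socle $T$) is a genuinely different route from the paper's for part (ii), where the paper instead notes that $\l(G)=4$ and invokes its classification of depth-$4$ groups (Theorem \ref{depth4}) to obtain the three shapes $T\times C_p$, $p.T$, $T.p$ before applying Janko's theorem. (For part (iii) the paper offers no details beyond ``readily checked'', so your write-up is in fact more explicit there.) However, your treatment of the hardest case $c=2$ contains a genuine gap.

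The gap is your claim that ``the central location is bounded by $M(T)=C_2$\ldots each contributing at most one factor'', which you use to exclude the distribution in which \emph{both} abelian factors are central. That bound is valid only for perfect (quasisimple) central extensions; it fails for general central extensions. Concretely, central extensions $1\to C_4\to G\to {\rm L}_2(q)\to 1$ are classified by $H^2({\rm L}_2(q),C_4)\cong {\rm Hom}(M({\rm L}_2(q)),C_4)\cong C_2$ (using that ${\rm L}_2(q)$ is perfect), and the nontrivial class gives the central product $G={\rm SL}_2(q)\circ C_4$. This group has $R(G)=Z(G)=C_4$, so both abelian composition factors sit centrally below the socle --- precisely the case your enumeration declares infeasible. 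Moreover it is not isomorphic to any group produced by your four distributions as you describe them: its centre is $C_4$ (so $G\not\cong {\rm SL}_2(q)\times C_2$, whose centre is $C_2\times C_2$), and its unique subgroup of index $2$ is ${\rm SL}_2(q)$ (so $G$ is not of the form $({\rm L}_2(q)\times p).2$ or ${\rm L}_2(q)\times(p.r)$). It is covered only by the type $2.{\rm L}_2(q).2$, which your argument derives solely from the ``one central, one outer'' distribution; so the final list in the corollary happens to survive, but your enumeration, read literally, wrongly proves that such a group cannot exist. The repair: when both primes lie in $R=R(G)$, first show $R\leqs Z(G)$ --- here note that the fact you need is that $T$ has no faithful $2$-dimensional representation over \emph{any} prime field $\mathbb{F}_p$ (your phrasing ``${\rm L}_2(p)$ has no nontrivial $2$-dimensional module over $\mathbb{F}_p$'' conflates the module characteristic with the defining characteristic; the correct statement follows since $T\leqs {\rm GL}_2(p)$ would force $T\leqs {\rm SL}_2(p)$, which has a unique involution), and also treat non-abelian $R$ via solubility of ${\rm Aut}(R)$. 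Then write $G=G'R$ with $G'\cap R\leqs M(T)=C_2$, so $G'\cong T$ or ${\rm SL}_2(q)$, and conclude that $G$ is $T\times R$, ${\rm SL}_2(q)\times C_r$, or the central product ${\rm SL}_2(q)\circ C_4$, the last being of type $2.{\rm L}_2(q).2$.
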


Let $G = {\rm L}_{2}(q)$, where $q$ is a prime, and consider the conditions on $q$ in the first row of Table \ref{tab:length}. One checks that the first ten primes that satisfy the given conditions are as follows:
\[
q \in \{13,43,67,173,283,317,653,787,907,1867\},
\]
but it is not known if there are infinitely many such primes. The following more general problem is addressed in \cite{AST}: Does there exist an infinite set $\mathcal{S}$ of non-abelian finite simple groups and a positive integer $N$ such that $l(G) \leqs N$ for all $G \in \mathcal{S}$? The main result of \cite{AST} gives a positive answer to this question. The key ingredient is a purely number theoretic result \cite[Theorem C]{AST}, which states that for each positive integer $n$, there is an infinite set of primes $\mathcal{P}$ and a positive integer $N$ such that $\O(p^n-1) \leqs N$ for all $p \in \mathcal{P}$. More precisely, for $n=2$ they show that the conclusion holds with $N=21$, which immediately implies that there are infinitely many primes $p$ with $l({\rm L}_{2}(p)) \leqs 20$. The same problem arises in work of Gamburd and Pak (see \cite[p.416]{GP}), who state that $l({\rm L}_{2}(p)) \leqs 13$ for infinitely many primes $p$ (giving \cite{HR} as a reference).

We establish the following strengthening of the results in \cite{AST, GP}.

\begin{theorem}\label{t:inf}
There are infinitely many finite non-abelian simple groups $G$ with $l(G) \leqs 9$.
\end{theorem}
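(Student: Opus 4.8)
The plan is to produce the infinite family inside the groups ${\rm L}_{2}(p)$ with $p$ prime, using the classification already established in Theorem \ref{t:length}. Inspecting the entries for ${\rm L}_{2}(q)$ with $q=p$ in Table \ref{tab:length}, one sees that the length of ${\rm L}_{2}(p)$ is controlled entirely by the quantity $m = \max\{\Omega(p-1),\Omega(p+1)\}$: if $4 \leq m \leq 8$ then $l({\rm L}_{2}(p)) = m+1 \leq 9$, while if $m=3$ then $l({\rm L}_{2}(p)) = 4$ precisely when $p \equiv \pm 3, \pm 13 \pmod{40}$ (and for $p>5$ one always has $m \geq 3$, since the shift among $p\pm 1$ divisible by $4$ then has at least three prime factors). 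Thus all of the group theory is absorbed into Theorem \ref{t:length}, and the statement reduces to a purely number-theoretic assertion: there are infinitely many primes $p$ with $\Omega(p-1) \leq 8$ and $\Omega(p+1) \leq 8$.

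It is worth emphasising why this is genuinely harder than the input used in \cite{AST}. The bound $\Omega(p^2-1) \leq 21$ from \cite{AST} only constrains the \emph{sum} $\Omega(p-1)+\Omega(p+1)$, and hence only forces $m \leq 20$ and $l({\rm L}_{2}(p)) \leq 20$; a small sum is perfectly compatible with one of $p-1$, $p+1$ having many prime factors. To reach length $9$ one must bound $\Omega(p-1)$ and $\Omega(p+1)$ \emph{separately}, which is exactly the strengthening supplied by the appendix.

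The heart of the matter is therefore the number-theoretic result of Heath-Brown proved in the appendix, and this is where I expect the main difficulty to lie. I would establish it by sieve methods: the goal is a simultaneous almost-prime result for the two linear forms $n \mapsto n-1$ and $n \mapsto n+1$ evaluated at primes $n=p$, forcing each of $p-1$ and $p+1$ to have at most eight prime factors. The natural tool is a weighted sieve of Richert/Chen type applied to the shifted-prime problem; the delicate points are the careful apportioning of the admissible prime factors between the two forms and the control of the error terms, which requires a Bombieri--Vinogradov-type equidistribution estimate for primes in arithmetic progressions (and, for the sharpest constants, a switching of roles between the two forms in the style of Chen's argument).

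Granting the appendix, the theorem follows immediately. To dispose of the single borderline possibility $m=3$ with the congruence failing, I would simply carry an additional condition $p \equiv 3 \pmod{40}$ through the sieve; since sieving in a fixed arithmetic progression costs only a change of constants, this still leaves infinitely many primes, and for each of them Theorem \ref{t:length} gives $l({\rm L}_{2}(p)) \leq 9$ (length $4$ if $m=3$, and length $m+1 \leq 9$ otherwise). Hence there are infinitely many simple groups ${\rm L}_{2}(p)$ of length at most $9$, as required.
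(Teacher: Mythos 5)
Your overall route coincides with the paper's: reduce to the groups ${\rm L}_{2}(p)$ and invoke the Heath-Brown appendix (Theorem \ref{t:hr}). The genuine gap is in your treatment of the borderline case $m=3$, where $m=\max\{\Omega(p-1),\Omega(p+1)\}$. First, you cannot both ``grant the appendix'' and impose $p\equiv 3\imod{40}$: Theorem \ref{t:hr} produces primes $p\equiv 5\imod{72}$, hence $p\equiv 5\imod{8}$, while $p\equiv 3\imod{40}$ forces $p\equiv 3\imod{8}$; the two residue classes are disjoint, so there is nothing to intersect. You would therefore have to rerun the sieve in your own class, and there the claim that this ``costs only a change of constants'' fails in an essential way. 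The class $5\imod{72}$ is engineered so that the exact powers of $2$ and $3$ dividing $p-1$ and $p+1$ are pinned down and \emph{balanced} ($p-1\equiv 4\imod 8$, and $p+1$ is divisible by $6$ but not by $4$ or $9$); only then does $\Omega((p^2-1)/24)\leqs 7$, combined with the trivial fact that each of $(p-1)/4$, $(p+1)/6$ has a prime factor, split into $\Omega(p\pm 1)\leqs 2+6=8$ \emph{on each side separately}. In your class one has $p-1\equiv 2\imod 8$ and $p+1\equiv 4\imod 8$, but the prime $3$ is uncontrolled: every member of the natural sieve sequence $\{(p^2-1)/8\}$ is divisible by $3$, so hypothesis $(\Omega_1)$ of \cite{HR} fails for it, and if one instead sieves $(p^2-1)/24$, then for output primes with $3\mid p+1$ the same numerology gives only $\Omega(p+1)\leqs \Omega(12)+6=9$. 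Since the sieve does not let you choose which class mod $3$ its output primes lie in, you do not obtain infinitely many primes with $m\leqs 8$; repairing this forces you to add a balanced congruence mod $3$ (and to recheck the sieve data), i.e.\ to reconstruct precisely the congruence engineering behind the choice $5\imod{72}$.

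The gap is easy to close, and the paper's proof shows the cleanest way: replace Theorem \ref{t:length} by Lemma \ref{l:easy}(ii), which gives $l({\rm L}_{2}(p))\leqs 1+\max\{4,\Omega(p\pm 1)\}$ for every prime $p\geqs 5$ (the exceptional primes $5,7,11,19,29$ are absorbed into the maximum), so that Theorem \ref{t:hr} finishes the proof with no congruence condition at all. Alternatively, keep your table-based reduction and observe that the borderline case resolves itself for the appendix's primes: if $p\equiv 5\imod{72}$ and $m=3$, then $p\equiv 5\imod 8$ already rules out $p\equiv \pm 1\imod 8$, and if $p\equiv \pm 1\imod 5$ then $20\mid p-1$ or $30\mid p+1$, forcing $\Omega(p-1)\geqs 4$ or $\Omega(p+1)\geqs 4$ (equality would require $p=21$ or $p=29$, and neither is a prime in the class $5\imod{72}$), contradicting $m=3$. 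Hence such primes automatically satisfy $p\equiv\pm 3,\pm 13\imod{40}$, and Table \ref{tab:length} gives $l({\rm L}_{2}(p))=4$. Either repair makes your argument agree with the paper's.
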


In fact we show that $l({\rm L}_{2}(p)) \leqs 9$ for infinitely many primes $p$.
As explained in Section \ref{s:hbb}, this is easily deduced from the following number-theoretic result of Heath-Brown, which is of independent interest.

\begin{theorem}[Heath-Brown]\label{th:hb}
There are infinitely many primes $p\equiv 5 \imod{72}$ for which
  \[\Omega((p^2-1)/24))\leqs 7.\]
\end{theorem}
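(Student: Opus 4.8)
The plan is to produce infinitely many primes $p$ in the arithmetic progression $p\equiv 5\imod{72}$ for which $(p^2-1)/24$ has few prime factors, by applying a weighted sieve. Write $n=(p^2-1)/24=(p-1)(p+1)/24$. The congruence $p\equiv 5\imod{72}$ is chosen precisely so that the factor $24$ divides $p^2-1$ cleanly (indeed $72=8\cdot 9$ forces $8\mid p-1$ and $9\mid p+1$ up to the right split, so that $24\mid p^2-1$ and the quotient is an integer coprime to small primes), and also so that the residue class is compatible with the later sieve conditions. The aim is to bound $\Omega(n)\leqslant 7$ for infinitely many such $p$, so that altogether $\Omega(p^2-1)=\Omega(24)+\Omega(n)\leqslant 4+7$ with the structural input giving the sharper count stated.

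The key steps, in order: first I would set up the sieve problem with the sequence $\mathcal{A}=\{\,p^2-1 : p\equiv 5\imod{72},\ p\leqslant x\,\}$, using the primes $p$ (restricted to the progression) as the object being counted and $n$ as the quantity to be almost-primed. Since $p$ ranges over primes, the relevant distribution estimate is the Bombieri--Vinogradov theorem, which controls the error terms in $\sum_{p\leqslant x}$ over moduli up to $x^{1/2-\varepsilon}$ on average; this gives a sieve of level $D=x^{1-\varepsilon}$ for the variable $p^2-1$ (level $x^{1/2}$ in each of $p\pm 1$). Second, I would feed this level of distribution into a weighted linear sieve of Richert/Rosser--Iwasawa type, tailored to bound the number of prime factors of $n$ rather than merely detecting an almost-prime; the weighting is designed to discount those $p$ for which $(p^2-1)/24$ acquires too many small prime factors. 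Third, one computes the resulting sieve dimension: because we are sieving the two linear factors $p-1$ and $p+1$ simultaneously, the effective dimension is $2$, and the comparison of the main term (a positive constant times $x/(\log x)^{2}$, via a singular series that is positive thanks to the $p\equiv 5\imod{72}$ choice removing local obstructions) against the error term determines the best admissible bound on $\Omega(n)$. Optimising the sieve weights against dimension~$2$ and level $x^{1-\varepsilon}$ is exactly what yields the numerical constant $7$.

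The main obstacle is the numerical optimisation in the weighted sieve: extracting the specific bound $\Omega(n)\leqslant 7$ (rather than some larger constant) requires a careful, essentially quantitative, choice of the sifting limits and weight function, balancing the Rosser--Iwasawa upper and lower bound functions $F$ and $f$ for dimension $\kappa=2$ against the level-of-distribution exponent and the buffer consumed by the fixed factor $24$. A secondary technical point is verifying positivity and evaluating the singular series for the progression $p\equiv 5\imod{72}$, ensuring no small prime is forced to divide $n$ in a way that would inflate $\Omega(n)$; this is a finite local computation but must be done with care so that the constant in the main term is provably positive. I expect the Bombieri--Vinogradov input to be entirely standard, so the real work is the sieve-theoretic bookkeeping that converts level $x^{1-\varepsilon}$ and dimension $2$ into the sharp value $7$.
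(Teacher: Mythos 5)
Your proposal follows essentially the same route as the paper: Heath-Brown's proof applies the weighted sieve of Halberstam and Richert (Theorem 10.2 of their book) to the set $\mathcal{A}=\{(p^2-1)/24 : p\equiv 5 \imod{72},\ p\leqslant x\}$, with sieve dimension $\kappa=2$ (the density function satisfies $\omega(\ell)=2$ at every prime $\ell\geqslant 5$ and vanishes at $2$ and $3$), distribution input from the Bombieri--Vinogradov theorem, and the numerical optimisation taken from Porter's calculations, which show that $r>6.7$ is admissible with the parameters $u=2.2$, $v=22$, whence $r=7$.

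There is, however, a genuine error in your level-of-distribution claim, namely that Bombieri--Vinogradov ``gives a sieve of level $D=x^{1-\varepsilon}$ for the variable $p^2-1$ (level $x^{1/2}$ in each of $p\pm 1$)''. What the sieve actually requires is control of $|\mathcal{A}_d|=\#\{p\leqslant x : d\mid (p^2-1)/24\}$; this condition places $p$ in $\omega(d)$ residue classes modulo $72d$, and Bombieri--Vinogradov controls these counts, on average over $d$, only for $72d\leqslant x^{1/2-\varepsilon}$. One cannot combine ``level $x^{1/2}$ in each factor'' into level $x^{1-\varepsilon}$ for the product: the moduli $d$ being sieved do not split into a modulus for $p-1$ times a modulus for $p+1$ in the way this would require, and no distribution estimate of level $x^{1-\varepsilon}$ is known for this sequence (it would go far beyond Bombieri--Vinogradov). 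Accordingly, the paper verifies the condition $\Omega(R(2,\alpha))$ with $\alpha=\tfrac{1}{2}$, not with $\alpha$ near $1$. This matters for your accounting of why the answer is $7$: with dimension $2$ and level $x^{1-\varepsilon}$ the weighted sieve would yield an exponent well below $7$, whereas the constant $7$ is precisely what dimension $\kappa=2$ together with $\alpha=\tfrac{1}{2}$ produces in Porter's computation. Your conclusion survives only because the correct, weaker level still suffices; the optimisation must be run at $\alpha=\tfrac{1}{2}$. (Two harmless slips: the weights are Richert's, and the usual attribution is Rosser--Iwaniec, not ``Rosser--Iwasawa''; and the sifted set should consist of the integers $(p^2-1)/24$ rather than $p^2-1$, as your later discussion in effect assumes.)
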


See Appendix \ref{appendix} for the proof of this theorem, which implies that
there are infinitely many primes $p$ for which $\max\{\Omega(p\pm 1)\} \leqs 8$.

\subsection{Main results on chain differences and ratios}

Finally, we study the relationship between the length and depth of a finite group. Our first result determines the simple groups of chain difference two (see Section \ref{s:cd} for the proof). This extends earlier work of Brewster et al. \cite[Theorem 3.3]{BWZ} (also see Theorem \ref{t:bwz}), who described the simple groups of chain difference one.

\begin{theorem}\label{t:maincd}
Let $G$ be a finite simple group. Then ${\rm cd}(G)=2$ if and only if one of the following holds:
\begin{itemize}\addtolength{\itemsep}{0.2\baselineskip}
\item[{\rm (i)}] $G=A_7$, ${\rm J}_{1}$ or ${\rm U}_{3}(5)$.
\item[{\rm (ii)}] $G = {\rm L}_{2}(q)$ and either $q \in \{7,8,11,27,125\}$, or $q$ is a prime and one of the following holds:

\vspace{1mm}

\begin{itemize}\addtolength{\itemsep}{0.2\baselineskip}
\item[{\rm (a)}] $\max\{\O(q \pm 1)\} = 4$ and either $\min\{\O(q \pm 1)\} = 2$, or $q \equiv \pm 3, \pm 13 \imod{40}$.
\item[{\rm (b)}]$\max\{\O(q \pm 1)\} = 5$, $\min\{\O(q \pm 1)\} \geqs 3$ and $q \not\equiv \pm 3, \pm 13 \imod{40}$.
\end{itemize}
\end{itemize}
\end{theorem}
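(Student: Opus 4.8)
The plan is to reduce the classification to a finite verification against Table~\ref{tab:length}. Since $G$ is simple we have $\l(G)\geqs 3$, so $\cd(G)=2$ is equivalent to $l(G)=\l(G)+2$, and the first task is to bound $l(G)$. Within every family of simple groups the length grows without bound, while the depth stays bounded: the groups of Lie type and the sporadic groups have depth bounded by an absolute constant, and $\l(A_n)\leqs 23$ for all $n$ (\cite{BLS1}). Hence $\cd(G)=l(G)-\l(G)\to\infty$ along each family, so $\cd(G)=2$ can hold for only finitely many simple groups, all of bounded length. Combining the depth bounds of \cite{BLS1} with the lengths tabulated in \cite{CST}, I would check that every simple group with $l(G)\geqs 10$ has $\cd(G)\geqs 3$; for the alternating groups this amounts to the statement that $\cd(A_n)=2$ holds only for $n=7$ (while $A_5,A_6$ give $\cd=1$ and $\cd(A_n)\geqs 3$ for $n\geqs 8$). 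Thus $\cd(G)=2$ forces $l(G)\leqs 9$, placing $G$ in Table~\ref{tab:length}.

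With $l(G)\leqs 9$ and $\l(G)\geqs 3$, and since no simple group of length $8$ or $9$ has depth exceeding $5$ (as one reads off from Table~\ref{tab:main} and the depth-$4$ results of Theorem~\ref{asdepth4}), the only possibilities are $(\l,l)\in\{(3,5),(4,6),(5,7)\}$. I would first settle the entries of Table~\ref{tab:length} not of the form ${\rm L}_2(q)$. The length-$6$ groups $A_7$ and ${\rm J}_1$ lie outside Table~\ref{tab:main} and both have depth $4$ (for $A_7$ one descends through the depth-$3$ maximal subgroup ${\rm L}_2(7)$, and ${\rm J}_1$ occurs in Table~\ref{tab:as2}), so each has $\cd=2$. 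For ${\rm U}_3(5)$, of length $7$, a direct inspection of its maximal subgroups -- each of depth at least $4$, since $A_7$ has depth $4$ and the parabolic $5^{1+2}{:}8$ has chief length at least $5$ -- gives $\l({\rm U}_3(5))=5$, whence $\cd=2$; this completes part~(i). The remaining entries are excluded by the same bookkeeping: ${}^2B_2(8)$ and ${\rm L}_3(3)$ belong to Table~\ref{tab:main} but have length $8$, giving $\cd=5$, while $M_{11},M_{12},{\rm U}_3(3),{\rm U}_4(2),{\rm L}_3(4),A_8$ (and the length-$9$ groups ${\rm U}_3(q)$) have depth $4$ or $5$ with length in $\{7,8,9\}$, so in each case $l\neq\l+2$.

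The substance of the proof is the family $G={\rm L}_2(q)$, where both invariants can be computed explicitly. The depth is $3$ precisely under the criteria of Table~\ref{tab:main}: one descends in three steps either through a dihedral subgroup $D_{q\pm 1}$ when $(q\pm 1)/2$ is prime, or through the maximal subgroup $A_4$ (of chief length $2$) when $q\equiv\pm 3,\pm 13\imod{40}$; in all other cases $\l(G)=4$, the fastest descent having length four (through $A_5$, $S_4$, a subfield subgroup, or a dihedral subgroup of chief length $3$). The length is $l(G)=1+\max\{\O(q\pm 1)\}$ once $\max\{\O(q\pm 1)\}\geqs 4$, the longest chain running through the Borel subgroup or a maximal dihedral subgroup; for small $q$ the exceptional subgroups $A_4,S_4,A_5$ can lengthen the longest chain. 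Matching $l=\l+2$ now separates cleanly: $\l=3$ with $l=5$ forces $\max\{\O(q\pm 1)\}=4$ together with the depth-$3$ criterion, which is exactly condition~(a); and $\l=4$ with $l=6$ forces $\max\{\O(q\pm 1)\}=5$ together with the negation of that criterion, namely $\min\{\O(q\pm 1)\}\geqs 3$ and $q\not\equiv\pm 3,\pm 13\imod{40}$, which is condition~(b). The finitely many prime-power values and the small primes at which a sporadic subgroup shifts the length yield the explicit list $q\in\{7,8,11,27,125\}$.

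The main obstacle is twofold. The length bound of the first paragraph -- ruling out large depth -- rests on the quantitative depth estimates of \cite{BLS1} (equivalently on the lower bound for the chain ratio of simple groups established elsewhere in this paper); without such input there is no a~priori ceiling on $\l(G)$. The more delicate point, however, is the exact length computation for ${\rm L}_2(q)$: one must track the maximal subgroups $A_4$, $S_4$ and $A_5$, whose presence is controlled by the residues of $q$ modulo $8$ and modulo $5$ (hence the congruences modulo $40$), both to pin down the length for small $q$ and to separate the near-misses ${\rm L}_2(9),{\rm L}_2(19),{\rm L}_2(25),{\rm L}_2(29)$, which turn out to have chain difference $1$ or $3$ rather than $2$.
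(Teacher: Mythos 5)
Your top-down reduction is legitimate and genuinely different from the paper's organization. The paper never bounds $l(G)$ globally; it works family by family, with the section-monotonicity of chain difference (Lemma~\ref{l:bwz}) as its main engine, saving the hard direct analysis for ${\rm L}_2(q)$ (Lemma~\ref{l:cdl2}). You instead deduce from ${\rm cr}(G) \geqs 5/4$ that $\cd(G)=2$ forces $l(G) \leqs 10$, and then $l(G) \leqs 9$ via the equality clause of Theorem~\ref{t:maincr}, placing $G$ in Table~\ref{tab:length}. This is not circular, since the paper's proof of Theorem~\ref{t:maincr} does not use Theorem~\ref{t:maincd}. (Your stated justification that groups of Lie type have depth bounded by an absolute constant is not correct -- the bounds in \cite{BLS1} involve $\O(f)$ and are unbounded -- but the chain-ratio route rescues the reduction, as your final paragraph acknowledges.)

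The decisive gap is that, after this reduction, the remaining verification is \emph{not} finite, contrary to your framing (``finite verification'', ``bookkeeping'', ``finitely many prime-power values''). Table~\ref{tab:length} contains infinite families of prime powers -- ${\rm L}_2(p^2)$, ${\rm L}_2(p^3)$, ${\rm L}_2(p^5)$ and ${\rm U}_3(p)$ -- and for each one must decide whether $\l = l-2$. Your matching for ${\rm L}_2(q)$ treats only $(\l,l)=(3,5)$ and $(4,6)$, and never addresses $(\l,l)=(5,7)$ for ${\rm L}_2(q)$: when $q=p^3$ with $\O(q-1)=4$ and $\O(q+1)\leqs 6$ we have $l(G)=7$, and $\l(G)=5$ holds precisely when $\l({\rm L}_2(p))=4$, so one must prove there is \emph{no} prime $p$ with $\O(p^3-1)=4$, $\O(p^3+1)\leqs 6$, $\O(p\pm 1)\geqs 3$ and $p \not\equiv \pm 3,\pm 13 \imod{40}$. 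This is exactly the delicate argument in the paper's Lemma~\ref{l:cdl2} (forcing $p \equiv 2 \imod 3$ via primality of $p^2+p+1$, then eliminating each residue class modulo $40$, using divisibility of $p^6-1$ by $7$ in the last case); nothing in your proposal supplies it.

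Similarly, your exclusion of $(\l,l)=(6,8)$ and $(7,9)$ rests on the assertion that ``no simple group of length $8$ or $9$ has depth exceeding $5$'', which cannot be ``read off'' from the depth-$3$ and depth-$4$ classifications: for ${\rm L}_2(p^2)$ the shortest chain runs through ${\rm PGL}_2(p)$ and gives only $\l(G) \leqs 2+\l({\rm L}_2(p)) \leqs 6$, and depth $6$ does occur for suitable $p$. Showing that depth $6$ never co-occurs with length $8$ is again a nontrivial congruence argument in Lemma~\ref{l:cdl2}; asserting the depth bound is essentially asserting the theorem in this case. Finally, a secondary but genuine error: your length formula $l({\rm L}_2(q)) = 1+\max\{\O(q\pm1)\}$ fails for $q=p^f$ with $f \geqs 2$, since by Lemma~\ref{l:easy}(ii) the Borel subgroup contributes $\O(q-1)+f$, not $\O(q-1)+1$; for instance $l({\rm L}_2(49))=7$, not $6$. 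The excluded prime-power families are precisely where the substance of the theorem lies, so as it stands the proposal proves the prime case and the sporadic bookkeeping, but not the theorem.
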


The \emph{chain ratio} of a finite group $G$ is given by ${\rm cr}(G) = l(G)/\l(G)$. By the aforementioned theorem of Iwasawa \cite{I}, ${\rm cr}(G)=1$ if and only if $G$ is supersoluble. Let us also observe that there are soluble, but not supersoluble, groups $G$ with the property that ${\rm cr}(G)$ is arbitrarily close to $1$. For example, if $G = S_4 \times C_{n}$, where $n$ is the product of the first $k$ primes, then $l(G) = k+4$ and $\l(G) \geqs k$. 

The next result establishes a best possible lower bound on the chain ratio of simple groups $G$. In particular, we see that ${\rm cr}(G)$ is bounded away from $1$, and Theorem \ref{t:cd} below shows that the same is true for all finite groups with trivial soluble radical.

\begin{theorem}\label{t:maincr}
Let $G$ be a non-abelian finite simple group. Then
\[
{\rm cr}(G) \geqs \frac{5}{4},
\]
with equality if and only if $l(G)=5$ and $\l(G)=4$.
\end{theorem}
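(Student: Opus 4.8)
The plan is to prove the inequality $\mathrm{cr}(G) = l(G)/\l(G) \geqs 5/4$ by combining lower bounds on $l(G)$ with the classification of small-depth simple groups recorded in Table~\ref{tab:main}. Since $G$ is non-abelian simple, Iwasawa's theorem gives $l(G) \geqs 4$, and the theorem of Shareshian--Woodroofe gives $\l(G) \geqs 3$. The natural first step is to partition the argument according to the value of $\l(G)$, because the chain ratio is most delicate precisely when the depth is small; for large depth the bound should follow from a crude general length estimate.

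First I would dispose of the case $\l(G) = 3$. Here the simple groups are exactly those in Table~\ref{tab:main}, and I would argue that every such group has $l(G) \geqs 4$ by Iwasawa, so $\mathrm{cr}(G) \geqs 4/3 > 5/4$ automatically. Thus no group of depth $3$ can achieve equality, and the ratio $5/4$ is safely exceeded. Next I would treat $\l(G) = 4$: here the bound $\mathrm{cr}(G) \geqs 5/4$ is equivalent to $l(G) \geqs 5$. Since $\l(G) = 4 \ne 3$, the group $G$ is \emph{not} in Table~\ref{tab:main}, and I would invoke Theorem~\ref{t:length} and Table~\ref{tab:length}: the simple groups of length exactly $4$ are precisely $A_5$ and certain ${\rm L}_2(q)$ with $q \equiv \pm 3, \pm 13 \imod{40}$, and for all of these one checks directly (from Table~\ref{tab:main}) that $\l(G) = 3$, not $4$. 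Hence any simple group with $\l(G) = 4$ must have $l(G) \geqs 5$, giving $\mathrm{cr}(G) \geqs 5/4$ with equality exactly when $l(G) = 5$, as claimed in the statement.

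It then remains to handle $\l(G) \geqs 5$, where I expect the main obstacle to lie, since the naive bound $l(G) \geqs 4$ only yields $\mathrm{cr}(G) \geqs 4/5 < 5/4$ and is useless. The key is to show that once the depth grows, the length grows proportionally faster. I would aim to prove a general lower bound of the shape $l(G) \geqs \frac{5}{4}\l(G)$ directly, or equivalently $l(G) \geqs \l(G) + \lceil \l(G)/4 \rceil$, by exploiting the structure of unrefinable chains through a maximal subgroup $M$ of $G$ realizing the depth. The relation $\l(G) = \l(M) + 1$ for a suitable maximal $M$, together with an inductive control comparing $l(M)$ and $\l(M)$, should drive the estimate; alternatively one can appeal to the explicit depth bounds and length estimates for simple groups of Lie type from \cite{SST,ST,ST2,BLS1}, since for groups of depth $\geqs 5$ the relevant families (large-rank Lie type and large alternating groups) have length substantially exceeding $\frac{5}{4}$ times their depth.

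The sharpness claim — that equality holds if and only if $(l(G),\l(G)) = (5,4)$ — follows by assembling the three cases: depth $3$ gives $\mathrm{cr} \geqs 4/3$ with strict inequality, depth $\geqs 5$ gives $\mathrm{cr} \geqs 5/4$ and I would verify the inequality is strict there (no group of depth $\geqs 5$ can have length exactly $\tfrac54\l(G)$ unless $\l(G)$ is a multiple of $4$, which would force $l(G) \geqs 10$ and one checks the bound is never tight), and the depth $4$ case is the unique source of equality, occurring precisely for the length-$5$ simple groups of Table~\ref{tab:length} that are not already of depth $3$. The hardest part will be establishing the clean proportional bound for $\l(G) \geqs 5$ uniformly across all families; I anticipate this reduces to the Lie-type length/depth estimates already available, so the work is in verifying that those estimates comfortably clear the $5/4$ threshold rather than in any new structural input.
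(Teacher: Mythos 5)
Your handling of depths $3$ and $4$ is correct: every group in Table \ref{tab:main} has $l(G)\geqs 4$, hence ${\rm cr}(G)\geqs 4/3$, and since every simple group of length $4$ (line 1 of Table \ref{tab:length}) lies in Table \ref{tab:main}, any simple group of depth $4$ has length at least $5$; this is also how the paper pins down the equality cases (see Remark \ref{r:cr}). The genuine gap is the case $\lambda(G)\geqs 5$, which is where essentially all of the content of the theorem lies, and neither mechanism you propose for it works as stated. The inductive one fails structurally: a maximal subgroup $M$ realizing $\lambda(G)=\lambda(M)+1$ need not be simple, so there is no inductive hypothesis to invoke; in fact such $M$ is typically soluble (for ${}^2B_2(q)$ with $q-1$ prime the depth is realized through the dihedral subgroup $D_{2(q-1)}$, which is supersoluble and so has ${\rm cr}(M)=1$ by Iwasawa's theorem), and for soluble $M$ the ratio $l(M)/\lambda(M)$ can equal $1$. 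Even if one somehow knew ${\rm cr}(M)\geqs 5/4$, the recursion $l(G)\geqs l(M)+1$, $\lambda(G)=\lambda(M)+1$ only yields $l(G)\geqs \tfrac{5}{4}\lambda(G)-\tfrac{1}{4}$, which does not close to the claimed bound, and iterating loses a further $\tfrac14$ at each step.

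Your fallback---``appeal to the length/depth estimates of \cite{SST,ST,ST2,BLS1} and verify they clear the $5/4$ threshold''---is not a finishing verification but \emph{is} the proof: it is exactly what the paper carries out in Lemmas \ref{l:salt}, \ref{l:c2}, \ref{l:ex} and \ref{l:c1}, and it cannot be organized by the value of the depth, because one has no way to enumerate the simple groups of depth at least $5$. Instead one must argue family by family over \emph{all} simple groups, combining the lower bound $l(G)\geqs \O(|B|)+r$ (via a Borel subgroup) with upper bounds such as $\lambda(G)\leqs 3\O(f)+36$ from \cite[Theorem 4]{BLS1}, and these general estimates are insufficient for bounded rank over small fields: the paper needs ad hoc unrefinable chains for ${\rm L}_{2}(q)$ with $q=p^2,p^3$, for ${\rm L}_{3}(q)$, ${\rm U}_{3}(q)$, ${\rm PSp}_{4}(q)$, ${}^2B_2(q)$, ${\rm U}_3(2^f)$ and others, with the ${\rm L}_{2}(q)$ family (which contains all the near-extremal examples) requiring the most delicate analysis. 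Your strictness claim for depth $\geqs 5$ (``one checks the bound is never tight'') is likewise asserted with no mechanism and requires the same case analysis. In short, the proposal correctly isolates the easy cases and the characterization of equality, but the inequality itself is left unproven for all groups of depth at least $5$.
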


It follows from \cite[Corollary 9]{BLS1} that there exists an absolute constant $a$
such that 
\[
l(G) \leqs a\, \cd(G)
\] 
for every non-abelian finite simple group $G$. As an immediate corollary of Theorem \ref{t:maincr}, we deduce that $a=5$ is the best possible constant.

\begin{corol}\label{c:cr}
Let $G$ be a non-abelian finite simple group. Then $l(G) \leqs 5\,{\rm cd}(G)$, with equality if and only if $l(G)=5$ and $\l(G)=4$.
\end{corol}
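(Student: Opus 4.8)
The plan is to deduce the corollary directly from Theorem \ref{t:maincr}, since the asserted inequality is nothing more than the chain ratio bound ${\rm cr}(G) \geqs 5/4$ rewritten in terms of the chain difference. First I would recall the two definitions involved, namely $\cd(G) = l(G) - \l(G)$ and ${\rm cr}(G) = l(G)/\l(G)$, and note that because $G$ is non-abelian simple we have $\l(G) \geqs 3 > 0$, so the division defining the chain ratio is legitimate and the passage between the two formulations causes no difficulty.

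The core step is a one-line rearrangement. The inequality $l(G) \leqs 5\,\cd(G)$ is equivalent, upon substituting $\cd(G) = l(G) - \l(G)$, to
\[
l(G) \leqs 5\big(l(G) - \l(G)\big),
\]
which simplifies to $5\,\l(G) \leqs 4\,l(G)$, and hence to $l(G)/\l(G) \geqs 5/4$, that is, ${\rm cr}(G) \geqs 5/4$. This is exactly the conclusion of Theorem \ref{t:maincr}, so the inequality in the corollary follows at once.

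For the equality statement, the same chain of equivalences shows that $l(G) = 5\,\cd(G)$ holds if and only if $5\,\l(G) = 4\,l(G)$, i.e.\ if and only if ${\rm cr}(G) = 5/4$. By the equality clause of Theorem \ref{t:maincr}, this occurs precisely when $l(G) = 5$ and $\l(G) = 4$, which is the assertion to be proved. Since such groups exist (for instance the groups ${\rm L}_{2}(q)$ in the second line of Table \ref{tab:length}), equality is genuinely attained, which confirms that the constant $5$ is best possible.

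In short, there is no real obstacle here: all of the substance resides in Theorem \ref{t:maincr}, and the corollary is a purely formal algebraic consequence obtained by converting the lower bound on the chain ratio into an upper bound on the length in terms of the chain difference.
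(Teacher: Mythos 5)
Your proposal is correct and follows essentially the same route as the paper, which likewise observes that $l(G) \leqs a\,\cd(G)$ is equivalent to ${\rm cr}(G) \geqs a/(a-1)$ and then invokes Theorem \ref{t:maincr} with $a=5$, including its equality clause. One tiny caveat: your parenthetical citing the second line of Table \ref{tab:length} as witnesses of equality is slightly imprecise (not every group of length $5$ there has depth $4$; the exact list is in Remark \ref{r:cr}), but this aside is not needed for the corollary and does not affect the argument.
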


\begin{remark}\label{r:cr}
The simple groups $G$ with $l(G)=5$ and $\l(G)=4$ arising in Theorem \ref{t:maincr} and Corollary \ref{c:cr} can be determined by combining Theorem \ref{t:length} with \cite[Theorem 1]{BLS1}. The groups that arise are all of the form ${\rm L}_{2}(q)$ and either $q \in \{9,19,29\}$, or $q$ is a prime with $\max\{\O(q \pm 1)\}=4$,  $\min\{\O(q \pm 1)\} \geqs 3$ and $q \not\equiv \pm 3, \pm 13 \imod{40}$.
\end{remark}

Our final result, which applies Theorem \ref{t:maincr}, relates the structure of an arbitrary finite group $G$
with its chain difference. We let $R(G)$ denote the soluble radical of $G$.

\begin{theorem}\label{t:cd}
Let $G$ be a finite group. Then
\[
l(G/R(G)) \leqs 10\, \cd(G).
\]
In particular, if $R(G)=1$ then 
${\rm cr}(G) \geqs 10/9$.
\end{theorem}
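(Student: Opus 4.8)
The plan is to reduce to the case $R(G)=1$ and then bound $l(G)$ by twice the length of the socle, using Corollary \ref{c:cr} to bound the socle length by $5\,\cd(G)$.

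First I would record the two additivity principles on which everything rests. For a normal subgroup $N \trianglelefteq G$ one has $l(G) = l(N) + l(G/N)$ (lift a maximal chain of $G/N$ to $G > \cdots > N$ and append a maximal chain of $N$; conversely each maximal step of a chain of $G$ contributes a maximal step to exactly one of $N$ and $G/N$), together with $\l(G) \leqs \l(N) + \l(G/N)$ (concatenate minimal chains). Subtracting gives the superadditivity of chain difference,
\[
\cd(G) \geqs \cd(N) + \cd(G/N).
\]
Taking $N = R(G)$ and using $\cd(R(G)) \geqs 0$ yields $\cd(G) \geqs \cd(G/R(G))$. Since $R(G/R(G)) = 1$, it therefore suffices to prove the sharper statement $l(G) \leqs 10\,\cd(G)$ for every finite group $G$ with trivial soluble radical, and then apply it to $G/R(G)$.

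So assume $R(G) = 1$ and write $S = \mathrm{soc}(G) = T_1 \times \cdots \times T_k$ with each $T_i$ non-abelian simple. Two estimates drive the proof. First, superadditivity gives $\cd(G) \geqs \cd(S) \geqs \sum_i \cd(T_i)$, and Corollary \ref{c:cr} gives $l(T_i) \leqs 5\,\cd(T_i)$, whence
\[
l(S) = \sum_i l(T_i) \leqs 5 \sum_i \cd(T_i) \leqs 5\,\cd(G).
\]
Second, I claim that $l(G/S) \leqs l(S)$; granting this, additivity of length gives $l(G) = l(S) + l(G/S) \leqs 2\,l(S) \leqs 10\,\cd(G)$. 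The ``in particular'' clause is then immediate: a nontrivial group with $R(G)=1$ is insoluble, so $\cd(G) \geqs 1$ by Iwasawa's theorem, and rearranging $l(G) \leqs 10\,\cd(G) = 10\bigl(l(G)-\l(G)\bigr)$ gives $\mathrm{cr}(G) = l(G)/\l(G) \geqs 10/9$.

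It remains to prove $l(G/S) \leqs l(S)$, which is the main obstacle. Since $R(G)=1$ forces $C_G(S)=1$, the conjugation action on the factors yields a normal series $S \trianglelefteq K \trianglelefteq G$, where $K$ is the kernel of the permutation action of $G$ on $\{T_1,\ldots,T_k\}$; here $K/S \hookrightarrow \prod_i \mathrm{Out}(T_i)$ and $P := G/K$ is a subgroup of $S_k$. Using additivity of length together with its monotonicity under passage to subgroups (any maximal chain of a subgroup extends upward to an unrefinable chain of the overgroup), I obtain
\[
l(G/S) = l(K/S) + l(P) \leqs \sum_i l(\mathrm{Out}(T_i)) + l(S_k).
\]
The permutation term is controlled by the Cameron--Solomon--Turull bound $l(S_k) < \tfrac{3}{2}k$ from \cite{CST}. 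The crux is then the inequality $l(\mathrm{Out}(T)) \leqs l(T) - 2$ for every non-abelian simple $T$: summing this over $i$ and adding the permutation bound gives $l(G/S) \leqs \sum_i(l(T_i)-2) + \tfrac{3}{2}k = l(S) - \tfrac{1}{2}k \leqs l(S)$. This comparison of $l(\mathrm{Out}(T))$ with $l(T)$ is where genuine case analysis (rather than formal manipulation) is required: it is routine for alternating and sporadic groups, and for groups of Lie type over a field of order $p^f$ it reduces to showing that the field-automorphism contribution $\Omega(f)$ is dominated by the length of a maximal torus, via elementary cyclotomic estimates such as $\Omega(p^f-1) \geqs \tau(f) \geqs \Omega(f)+1$. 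I expect this last step to absorb most of the work.
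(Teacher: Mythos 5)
Your proposal is correct and follows essentially the same route as the paper: the paper likewise reduces to $G/R(G)$, bounds the socle length by $5\,\cd(G)$ via Corollary \ref{c:cr} (this is Proposition \ref{p:ss}), and then bounds everything above the socle by the socle's length using the \cite{CST} bound on $l(S_k)$ together with a per-simple-group comparison of the outer automorphism group with $l(T)$ (Lemma \ref{l:aut}, which proves $l({\rm Aut}(S)) \leqs 2l(S)$ for semisimple $S$ via the inequality $\log_2|{\rm Out}(T)|+3/2 \leqs l(T)$, a slightly stronger form of your $l({\rm Out}(T)) \leqs l(T)-2$). Your only real deviation is presentational: you work with the series $S \lhd K \lhd G$ inside $G$ itself, whereas the paper embeds $G/R(G)$ into ${\rm Aut}({\rm Soc}(G/R(G)))$ and computes the length of the full automorphism group.
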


Combining this theorem with \cite[Proposition 2.2]{AST}, it follows that
\[
\O(|G/R(G)|) \leqs 100\, \cd(G)^2.
\]

Note that the length of $G$ itself need not be bounded in terms of $\cd(G)$;
indeed, if $G$ is supersoluble then $\cd(G)=0$ while $l(G)$ may be arbitrarily large.
However, we show in Proposition \ref{p:ss} below, that, if $\ss(G)$ denotes the direct product of the non-abelian composition factors of $G$ (with multiplicities), then
\[
l(\ss(G)) \leqs 5\, \cd(G).
\]
This extends Corollary \ref{c:cr} dealing with simple groups, and serves as a useful tool
in the proof of Theorem \ref{t:cd} above.

The layout of the paper is as follows. After some preliminaries in Section \ref{s:prel}, we prove our main results on depth (Theorems \ref{depth3}--\ref{asdepth4}) in Section \ref{proofs:depth}. Section \ref{sec:length} contains the proofs of our main results on length, namely Theorems \ref{t:length} and \ref{t:inf}, and Corollary \ref{c:small}. Finally, in Section \ref{cdr} we consider chain differences and chain ratios, proving Theorems \ref{t:maincd}, \ref{t:maincr} and \ref{t:cd}.

\section{Preliminaries}\label{s:prel}

We begin by recording some preliminary results, which will be needed in the proofs of our main theorems. Given a finite group $G$, we write ${\rm chiefl}(G)$ for the length of a chief series of $G$. Recall that $l(G)$ and $\l(G)$ denote the length and depth of $G$, respectively, as defined in the Introduction. Let $\cd(G) = l(G)-\l(G)$ be the chain difference of $G$.

\begin{lem}\label{add}
Let $G$ be a finite group and let $N$ be a normal subgroup of $G$.
\begin{itemize}\addtolength{\itemsep}{0.2\baselineskip}
\item[{\rm (i)}] $l(G) = l(N)+l(G/N)$.
\item[{\rm (ii)}] $\l(G/N) \leqs \l(G) \leqs \l(N) + \l(G/N)$.
\item[{\rm (iii)}] If $N$ has prime order, then $\l(G) = \l(G/N)+1$.
\end{itemize}
\end{lem}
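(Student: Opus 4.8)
The plan is to prove the three parts of Lemma \ref{add} in order, since each is essentially a statement about how unrefinable chains of $G$ interact with the normal subgroup $N$. The unifying principle is the correspondence theorem: maximal subgroups of $G$ containing $N$ correspond bijectively to maximal subgroups of $G/N$, and this lets us build and decompose chains across the quotient.

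For part (i), I would establish the two inequalities $l(G) \geqs l(N)+l(G/N)$ and $l(G) \leqs l(N)+l(G/N)$ separately. For the lower bound, take a maximal (length-realising) unrefinable chain $G/N = \bar{G}_0 > \bar{G}_1 > \cdots > 1$ in $G/N$ and a maximal unrefinable chain in $N$; pulling the quotient chain back through the projection $G \to G/N$ gives an unrefinable chain $G = G_0 > \cdots > N$ of length $l(G/N)$ (maximality is preserved under the correspondence), and then splicing on the chain inside $N$ produces an unrefinable chain of $G$ of total length $l(N)+l(G/N)$, so $l(G) \geqs l(N)+l(G/N)$. The reverse inequality is the content that needs care: given any unrefinable chain of $G$, I would intersect each term $G_i$ with $N$ and also project each $G_i$ into $G/N$, and argue that at each step $G_i > G_{i+1}$ exactly one of the two induced chains (the chain of intersections $G_i\cap N$ in $N$, or the chain of images $G_iN/N$ in $G/N$) drops by one maximal step while the other stays equal. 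This is the standard Jordan--Hölder-style bookkeeping; the key point is that because $G_{i+1}$ is maximal in $G_i$, the pair $(G_iN, G_{i+1}N)$ and the pair $(G_i\cap N, G_{i+1}\cap N)$ cannot both be proper, so every unrefinable chain of $G$ decomposes into an unrefinable chain of $N$ of some length $a$ and an unrefinable chain of $G/N$ of length $b$ with $a+b$ equal to the length of the original chain. Since every unrefinable chain of $N$ has length at most $l(N)$ and every unrefinable chain of $G/N$ has length at most $l(G/N)$, this yields $l(G) \leqs l(N)+l(G/N)$, and combined with the lower bound gives equality.

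For part (ii), the same two constructions give the bounds for depth, but now I want minimal rather than maximal chains. The upper bound $\l(G) \leqs \l(N)+\l(G/N)$ comes from splicing a minimal unrefinable chain of $G/N$ (pulled back to a chain from $G$ down to $N$) with a minimal unrefinable chain of $N$, producing an unrefinable chain of $G$ of length $\l(N)+\l(G/N)$; since $\l(G)$ is the minimum over all unrefinable chains, $\l(G)$ is at most this value. The lower bound $\l(G/N) \leqs \l(G)$ follows because any unrefinable chain of $G$ projects (via the decomposition established in part (i)) to an unrefinable chain of $G/N$ whose length is at most the length of the original; taking an unrefinable chain of $G$ of minimal length $\l(G)$, its projected part is an unrefinable chain of $G/N$ of length at most $\l(G)$, and since $\l(G/N)$ is the minimal such length we get $\l(G/N) \leqs \l(G)$.

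For part (iii), with $N$ of prime order, $N$ is a minimal normal subgroup and has no proper nontrivial subgroups, so $l(N)=\l(N)=1$. The upper bound $\l(G) \leqs \l(G/N)+1$ is then immediate from part (ii). For the matching lower bound $\l(G) \geqs \l(G/N)+1$, I would take a minimal unrefinable chain of $G$ and apply the decomposition from part (i): it splits into an unrefinable chain of $N$ of length $a \in \{0,1\}$ and an unrefinable chain of $G/N$ of length $b$, with $a+b = \l(G)$. If $a=1$ then $b \geqs \l(G/N)$ forces $\l(G) \geqs \l(G/N)+1$. The only danger is the case $a=0$, meaning the entire chain passes through subgroups each containing $N$ — but then every term contains $N$, so the chain never reaches $1$ (it can descend only as far as $N$), contradicting that it is a genuine unrefinable chain of $G$ ending at the trivial group. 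Hence $a=1$ necessarily, and $\l(G)=\l(G/N)+1$. I expect the main obstacle to be the careful verification in part (i) that every maximal step $G_i > G_{i+1}$ contributes exactly one to either the $N$-part or the $G/N$-part and never zero or two; this relies on the fact that if both $G_{i+1}N = G_iN$ and $G_{i+1}\cap N = G_i\cap N$, then by a standard isomorphism-theorem argument $G_{i+1}=G_i$, contradicting properness, and conversely both cannot strictly drop since that would refine the maximal step.
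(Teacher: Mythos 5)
Your parts (ii) and (iii) are correct, and (iii) is in substance the paper's own proof (the paper passes to images in $G/N$, observes that exactly one repetition occurs, namely at the largest $i$ with $N \leqs G_i$, deletes it to get an unrefinable chain of length $\l(G)-1$ in $G/N$, and then invokes (ii)). The genuine problem is in part (i): your claimed decomposition is half false. It is \emph{not} true that at a maximal step $G_i > G_{i+1}$ the chain of intersections with $N$ either stays equal or drops by one maximal step; consequently an unrefinable chain of $G$ need not induce an \emph{unrefinable} chain of $N$. Concretely, let $T$ be a non-abelian simple group, $G = T \times T$, $N = T \times 1$, and let $G_1 = \{(t,t) : t \in T\}$ be the diagonal subgroup, which is maximal in $G$ (this is exactly the configuration arising in part (ii) of Theorem \ref{depth4}). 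Then $G_0 \cap N = N \cong T$ while $G_1 \cap N = 1$, so at this single maximal step the intersection collapses from $T$ to $1$, which is certainly not a maximal step of $N$. Since your upper bound $l(G) \leqs l(N)+l(G/N)$ rests on applying ``every unrefinable chain of $N$ has length at most $l(N)$'' to the intersection chain, the argument as written fails.

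The repair is easy, because you never actually needed unrefinability on the $N$ side. What is true, and what your bookkeeping should say, is: (a) at each step not both induced chains can drop and not both can stay equal (your justifications for these two facts are correct), so if $a$ is the number of intersection-drops and $b$ the number of image-drops then $a+b=t$; and (b) when the image does drop, $G_{i+1}N/N$ is genuinely maximal in $G_iN/N$ --- you assert this but do not prove it; it follows from the modular (Dedekind) law, since any $K$ with $G_{i+1}N \leqs K \leqs G_iN$ satisfies $K = (K\cap G_i)N$ and $K \cap G_i \in \{G_{i+1},G_i\}$ by maximality of $G_{i+1}$ in $G_i$. On the intersection side, drop the maximality claim entirely: after deleting repetitions you have a strictly decreasing chain from $N$ to $1$, and refining it to an unrefinable chain shows $a \leqs l(N)$; together with $b \leqs l(G/N)$ this yields $l(G) \leqs l(N)+l(G/N)$. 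With these adjustments part (i) closes, and parts (ii) and (iii) stand unchanged, since they use only (a), (b) and, in (iii), the fact that $C_p$ has no proper nontrivial subgroups, so there the intersection chain can drop at most once and the issue above cannot occur. For comparison, the paper does not prove (i) at all --- it quotes Lemma 2.1 of \cite{CST} --- and dismisses (ii) as straightforward, so your self-contained treatment, once repaired, writes out more than the paper does.
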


\begin{proof}
Part (i) is \cite[Lemma 2.1]{CST} for part (i), and part (ii) is very straighforward.

Now consider part (iii). Suppose $|N|=p$, a prime, let $t = \l(G)$, and let 
\begin{equation}\label{chai}
G = G_0 > G_1 > \cdots > G_t=1
\end{equation}
be an unrefinable chain of subgroups of $G$. Pick $i$ maximal such that $N\leqs G_i$. Then $G_{i+1} < G_{i+1}N \leqs G_i$, and so $G_{i+1}N = G_i$. Hence, writing $\bar G_i$ for the image of $G_i$ in $G/N$, we have $\bar G_{i+1} = \bar G_i$, and so taking images in the chain \eqref{chai} and deleting repetitions gives an unrefinable chain of length less than 
$t$ in $G/N$. Consequently $\l(G/N) < \l(G)$. Since also $\l(G/N) \geqs \l(G)-1$ by (ii), the conclusion follows. 
\end{proof}

Notice that Lemma \ref{add}(i) implies that the length of a finite group is equal to the sum of the lengths of its composition factors. In particular, if $G$ is soluble then $l(G) = \O(|G|)$, which is the number of prime divisors of $|G|$ (counting multiplicities).

The next result is \cite[Lemma 1.3]{BWZ}, which is an easy corollary of Lemma \ref{add}.

\begin{lem}\label{l:bwz}
If $G$ is a finite group, $B \leqs G$ and $A$ is a normal subgroup of $B$, then
\[
\cd(G) \geqs \cd(B/A) + \cd(A).
\]
In particular, ${\rm cd}(G) \geqs {\rm cd}(L)$ for every section $L$ of $G$.
\end{lem}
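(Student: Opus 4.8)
The plan is to factor the desired inequality through the intermediate subgroup $B$, splitting it into a \emph{quotient step} and a \emph{subgroup step}:
\[
\cd(G) \;\geqs\; \cd(B) \;\geqs\; \cd(B/A) + \cd(A).
\]
The ``in particular'' clause then drops out immediately: a section $L$ of $G$ has the form $B/A$, and since the length of any group dominates its depth we have $\cd(A) \geqs 0$, whence $\cd(G) \geqs \cd(B/A) = \cd(L)$.

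The quotient step is a direct consequence of Lemma \ref{add}. Applying parts (i) and (ii) of that lemma to the group $B$ and its normal subgroup $A$ gives $l(B) = l(A) + l(B/A)$ and $\l(B) \leqs \l(A) + \l(B/A)$. Subtracting the second relation from the first yields
\[
\cd(B) = l(B) - \l(B) \geqs \big(l(A) - \l(A)\big) + \big(l(B/A) - \l(B/A)\big) = \cd(A) + \cd(B/A),
\]
exactly as needed.

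The subgroup step is where the real content lies, and is the main obstacle, although it too is elementary once set up correctly. I would fix any unrefinable chain $G = C_0 > C_1 > \cdots > C_s = B$ running from $G$ down to $B$; such a chain exists because a proper subgroup of a finite group always lies in a maximal subgroup, so one may descend from $G$, at each stage passing to a maximal subgroup containing $B$, until $B$ is reached. The crucial observation is that concatenating this connecting chain with any unrefinable chain from $B$ to $1$ produces an unrefinable chain of $G$, since maximality is preserved across the junction at $B$. Appending a chain from $B$ to $1$ that realises the depth of $B$ gives $\l(G) \leqs s + \l(B)$, while appending one that realises the length of $B$ gives $l(G) \geqs s + l(B)$.

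Subtracting these two bounds, the common connector length $s$ cancels:
\[
\cd(G) = l(G) - \l(G) \geqs \big(s + l(B)\big) - \big(s + \l(B)\big) = l(B) - \l(B) = \cd(B).
\]
Combining this with the quotient step yields $\cd(G) \geqs \cd(B) \geqs \cd(B/A) + \cd(A)$, which is the assertion. The one subtle point, which I expect to be the heart of the argument, is that the \emph{same} connecting chain must be used both to bound $l(G)$ from below and to bound $\l(G)$ from above, so that its length cancels cleanly; the remaining verifications — existence of a connecting chain, and that concatenation across the junction preserves unrefinability — are routine.
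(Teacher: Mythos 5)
Your proof is correct, and it is worth noting that the paper itself gives no argument at all for this lemma: it simply cites \cite[Lemma 1.3]{BWZ} and remarks that the statement is ``an easy corollary of Lemma \ref{add}.'' Your two-step decomposition $\cd(G) \geqs \cd(B) \geqs \cd(B/A) + \cd(A)$ makes precise what that remark glosses over. The quotient step is indeed the easy corollary: applying Lemma \ref{add}(i) and (ii) to $A \normeq B$ and subtracting gives $\cd(B) \geqs \cd(A) + \cd(B/A)$ exactly as you write. But the subgroup step $\cd(G) \geqs \cd(B)$ is \emph{not} a formal consequence of Lemma \ref{add}, which only concerns normal subgroups; your chain-concatenation argument --- fixing one unrefinable chain of length $s$ from $G$ down to $B$, using it to get both $l(G) \geqs s + l(B)$ and $\l(G) \leqs s + \l(B)$, and letting $s$ cancel in the difference --- is precisely the missing ingredient, and your emphasis on using the \emph{same} connecting chain for both bounds is the right point to flag (with two different connectors of lengths $s_1, s_2$ one would only get $\cd(G) \geqs \cd(B) + s_1 - s_2$, which is useless if $s_1 < s_2$). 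All the junction and existence verifications you call routine are indeed routine and correct, as is the reduction of the ``in particular'' clause via $\cd(A) \geqs 0$. In short: your proposal supplies a complete, self-contained proof of a statement the paper outsources to \cite{BWZ}.
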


\begin{lem}\label{kolem}
Let $G$ be a finite group.
\begin{itemize}\addtolength{\itemsep}{0.2\baselineskip}
\item[{\rm (i)}] If $G$ is soluble, then $\l(G) = {\rm chiefl}(G)$.
\item[{\rm (ii)}] If $G$ is insoluble, then $\l(G) \geqs {\rm chiefl}(G)+2$.
\end{itemize}
\end{lem}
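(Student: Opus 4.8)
Part (i) is precisely Kohler's theorem \cite[Theorem 2]{K} (already quoted in Remark \ref{r:1}(a)), so all the content lies in part (ii), which I would prove by strong induction on $|G|$. The engine is the identity $\l(G)=1+\min\{\l(M):M\text{ maximal in }G\}$, valid because an unrefinable chain of $G$ consists of a maximal subgroup $M$ followed by an unrefinable chain of $M$. Having fixed a maximal subgroup $M$ with $\l(G)=\l(M)+1$, I would first record the auxiliary inequality ${\rm chiefl}(M)\geqs {\rm chiefl}(G)-1$ for every maximal $M\leqs G$: choosing a minimal normal subgroup $N$ (so that ${\rm chiefl}(G)={\rm chiefl}(G/N)+1$), either $N\leqs M$, in which case ${\rm chiefl}(M)\geqs {\rm chiefl}(M/N)+1\geqs {\rm chiefl}(G/N)$ by induction, or $MN=G$, in which case ${\rm chiefl}(M)\geqs {\rm chiefl}(M/(M\cap N))={\rm chiefl}(G/N)$.

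The induction then splits on the solubility of this chosen $M$. If $M$ is insoluble, the inductive hypothesis gives $\l(M)\geqs {\rm chiefl}(M)+2\geqs {\rm chiefl}(G)+1$, whence $\l(G)=\l(M)+1\geqs {\rm chiefl}(G)+2$, as wanted. If $M$ is soluble then $\l(M)={\rm chiefl}(M)$ by part (i), so I must instead establish the sharper bound ${\rm chiefl}(M)\geqs {\rm chiefl}(G)+1$. Taking a minimal normal subgroup $N$, the case $N\leqs M$ forces $N$ abelian (otherwise $M\supseteq N$ would be insoluble), hence $G/N$ insoluble, and it is dispatched by the inductive hypothesis for $G/N$ together with $\l(M)\geqs \l(M/N)+1$ and $\l(G/N)\leqs \l(M/N)+1$. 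This leaves the case $MN=G$ with $N$ non-abelian (solubility of $M$ forces $G/N\cong M/(M\cap N)$ soluble, so $N$ cannot be abelian), where ${\rm chiefl}(M)=\O(|M\cap N|)+{\rm chiefl}(G)-1$. Thus everything reduces to the single clean statement that $\O(|M\cap N|)\geqs 2$ whenever $M$ is a soluble maximal subgroup with $MN=G$ and $N$ a non-abelian minimal normal subgroup.

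To prove $\O(|M\cap N|)\geqs 2$ I would rule out $|M\cap N|\in\{1,p\}$ for a prime $p$; write $N=T^k$ with $T$ non-abelian simple. If $|M\cap N|=p$, then $M\cap N$ is normal in $M$ but not in $G$ (a nontrivial proper normal subgroup of $N$ would contradict minimality of $N$), so $M=N_G(M\cap N)$ and hence $M\cap N=N_N(M\cap N)$; but the normaliser in $N=T^k$ of a subgroup of order $p$ strictly contains it — directly via the centralising factors when $k\geqs 2$, and by Burnside's normal $p$-complement theorem when $k=1$ — a contradiction. The genuinely hard case, which I expect to be the main obstacle, is $M\cap N=1$: here $M$ is a maximal \emph{complement} to the non-abelian minimal normal subgroup $N=T^k$, and to derive a contradiction I would invoke the structure theory of such configurations. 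By the twisted wreath product analysis of Baddeley \cite{Bad}, maximality of a complement to $N$ forces the relevant point stabiliser in $M$ to map onto a subgroup of ${\rm Aut}(T)$ containing ${\rm Inn}(T)$, so that $T$ is a section of $M$; as $T$ is non-abelian simple, this contradicts the solubility of $M$. Apart from this complement case — the one point where the O'Nan--Scott/Baddeley description of maximal subgroups complementing a non-abelian socle is needed — the argument uses only elementary chain manipulation, Lemma \ref{add}, Kohler's theorem, and the bound $\l(H)\geqs 3$ for insoluble $H$ (\cite{SW}).
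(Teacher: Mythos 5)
The paper itself does not prove this lemma: part (i) is quoted from Kohler \cite[Theorem 2]{K} and part (ii) from Shareshian--Woodroofe \cite[Theorem 1.4]{SW}, so your induction is an attempt at a self-contained proof of a result the authors only cite. Most of your skeleton is sound: the recursion $\l(G)=1+\min_M \l(M)$, the auxiliary inequality ${\rm chiefl}(M)\geqs {\rm chiefl}(G)-1$, the case of insoluble $M$, and the case $N\leqs M$ are all correct, and your treatment of $M\cap N=1$ via the O'Nan--Scott/Baddeley description (the point stabiliser maps onto a subgroup of ${\rm Aut}(T)$ containing ${\rm Inn}(T)$, so $T$ is a section of $M$) is legitimate, provided you first pass to the quotient by ${\rm core}_G(M)$ so that the coset action is faithful and primitive.

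The genuine gap is the identity ${\rm chiefl}(M)=\O(|M\cap N|)+{\rm chiefl}(G)-1$, which is false. Since $M\cap N\lhd M$ and $M/(M\cap N)\cong G/N$, what is true is ${\rm chiefl}(M)=c+{\rm chiefl}(G)-1$, where $c$ is the number of $M$-chief factors inside $M\cap N$; and $c$ can be strictly smaller than $\O(|M\cap N|)$, because chief factors of a soluble group are elementary abelian of order $p^a$ with $a$ possibly at least $2$ (compare ${\rm chiefl}(A_4)=2$ with $\O(12)=3$). So what you must prove is $c\geqs 2$, i.e.\ that $M\cap N$ is nontrivial \emph{and is not a minimal normal subgroup of $M$}; ruling out $|M\cap N|\in\{1,p\}$ does not achieve this, and your case analysis says nothing about the possibility that $M\cap N$ is an irreducible $M$-module of order $p^a$, $a\geqs 2$. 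Fortunately, your own normaliser argument closes the hole: if $M\cap N$ were minimal normal in the soluble group $M$, it would be elementary abelian, and exactly as in your order-$p$ case one gets $N_N(M\cap N)=M\cap N$; but a self-normalising abelian $p$-subgroup $P$ of $N=T^k$ must be a Sylow $p$-subgroup of $N$ (normalisers grow in $p$-groups), hence $P=\prod_i P_i$ with each $P_i$ Sylow in $T_i$ and $N_{T_i}(P_i)=C_{T_i}(P_i)=P_i$, and Burnside's normal $p$-complement theorem then contradicts the simplicity of $T$. With the reduction restated in terms of chief factors, and the Burnside step extended from order $p$ to arbitrary elementary abelian $M\cap N$ in this way, your argument does go through; as written, however, it is incomplete.
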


\begin{proof}
Part (i) is \cite[Theorem 2]{K} and part (ii) is \cite[Theorem 1.4]{SW}.
\end{proof}

\begin{lem}\label{fab}
Let $H$ be a finite nontrivial soluble group, $p$ a prime, and suppose $G = H^p\la \a\ra$ where $\a^p \in H^p$ and $\a$ permutes the $p$ factors transitively. Then $\l(G) \geqs \l(H)+2$.
\end{lem}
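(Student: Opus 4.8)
The plan is to work entirely with chief lengths. Since $H$ is soluble, so is $G = H^p\la\a\ra$, and Lemma \ref{kolem}(i) gives $\l(G) = {\rm chiefl}(G)$ and $\l(H) = {\rm chiefl}(H) =: k$. Thus it suffices to prove ${\rm chiefl}(G) \geqs k+2$. Since $H^p \normeq G$ with $G/H^p \cong C_p$, additivity of chief length along the normal subgroup $H^p$ gives ${\rm chiefl}(G) = {\rm chiefl}_G(H^p) + 1$, where ${\rm chiefl}_G(H^p)$ denotes the number of factors in a $G$-chief series of $H^p$. So the target becomes ${\rm chiefl}_G(H^p) \geqs k+1$: I need to locate one ``extra'' $G$-chief factor beyond the naive count of $k$.

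First I would fix a chief series $1 = H_0 \normeq H_1 \normeq \cdots \normeq H_k = H$ of $H$ and lift it to a $G$-invariant normal series of $H^p$. Writing $\s$ for the $p$-cycle by which $\a$ permutes the factors, conjugation by $\a$ carries the $j$-th factor isomorphically to the $\s(j)$-th factor, and the wrap-around map (induced by $\a^p \in H^p$) is an inner automorphism of $H$; since inner automorphisms preserve each normal subgroup $H_i$ setwise, the subgroups $H_i^{(p)} := H_i \times \cdots \times H_i$ ($p$ copies, one in each factor) are $\a$-invariant, hence normal in $G$. This produces a $G$-normal series $1 = H_0^{(p)} \normeq \cdots \normeq H_k^{(p)} = H^p$ whose $i$-th factor is isomorphic to $V_i^p$, where $V_i = H_i/H_{i-1}$ is the $i$-th chief factor of $H$. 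Each $V_i^p$ refines into at least one $G$-chief factor.

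The key point is that the top factor contributes \emph{two}. The chief factor $V_k = H/H_{k-1}$ is abelian, so $H$ (and hence $H^p$) acts trivially on it by conjugation; therefore $H^p$ acts trivially on $V_k^p$, which is accordingly a module for $G/H^p \cong C_p$. As $\a$ permutes the $p$ copies of $V_k$ regularly, $V_k^p$ is the induced module $V_k \otimes_{\mathbb{F}_r} \mathbb{F}_r[C_p]$ (with $r$ the prime dividing $|V_k|$), which contains the proper nonzero ``diagonal'' submodule $V_k \otimes \la \sum_g g \ra$. Hence $V_k^p$ is $G$-reducible and refines into at least two $G$-chief factors. Summing the contributions gives ${\rm chiefl}_G(H^p) \geqs (k-1) + 2 = k+1$, whence ${\rm chiefl}(G) \geqs k+2 = \l(H)+2$, as required.

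The main obstacle is the bookkeeping in the nonsplit case $\a^p \neq 1$: I must verify that the lifted series $H_i^{(p)}$ really is $G$-invariant despite the twist, which is precisely where I use that the wrap-around discrepancy is inner and so fixes each normal term $H_i$. The remaining ingredient---that the degree-$p$ permutation (induced) module over any $\mathbb{F}_r$ is reducible---is uniform and elementary, since its trivial submodule is proper whenever $p \geqs 2$.
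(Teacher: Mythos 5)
Your overall route---lifting a chief series of $H$ to a $G$-invariant series of $H^p$ and extracting one extra chief factor from the top layer---is genuinely different from the paper's proof and can be made to work, but as written there is a real gap at the invariance step. You assert that $H_i^{(p)} = H_i \times \cdots \times H_i$ (the \emph{same} subgroup $H_i$ in every coordinate) is $\alpha$-invariant because the wrap-around automorphism is inner. That is a non sequitur: conjugation by $\alpha$ carries the $j$-th factor to the $\sigma(j)$-th factor by an isomorphism which, read through the identifications with $H$, is an automorphism $\psi_j$ of $H$, and the hypothesis $\alpha^p \in H^p$ controls only the composite $\psi_{p-1}\cdots\psi_0$ around the full cycle (it is inner); the individual legs $\psi_j$ may be outer and need not fix $H_i$ setwise. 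For a concrete failure, take $H = C_2 \times C_2 = \langle a,b\rangle$, $p=3$, and let $\theta$ be the order-$3$ automorphism cycling $a \mapsto b \mapsto ab$; with legs $\psi_0 = \theta$, $\psi_1 = \theta^{-1}$, $\psi_2 = \mathrm{id}$ the wrap-around is the identity (so one may even take $\alpha^3 = 1$), yet $\alpha(a,1,1)\alpha^{-1} = (1,b,1) \notin \langle a\rangle \times \langle a\rangle \times \langle a\rangle$. So the series you build is simply not normal in $G$ in general: a chief series of $H$ need not be preserved by the automorphisms that $\alpha$ induces.

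The repair is the orbit-twisted lift, and it is telling that this is exactly the device in the paper's own (different) proof. Set $K_i = \prod_{j=0}^{p-1} H_i^{\alpha^j}$, with $H_i$ placed in one fixed coordinate: each conjugate $H_i^{\alpha^j}$ is a normal subgroup of the corresponding factor, $\alpha$ visibly permutes these conjugates, and the cycle closes up precisely because $H_i^{\alpha^p} = H_i$ (as $\alpha^p \in H^p$ acts innerly on that factor and $H_i \lhd H$)---this is where ``inner automorphisms preserve normal subgroups'' legitimately enters. With $K_i$ in place of $H_i^{(p)}$ your argument goes through: $K_i/K_{i-1} \cong V_i^p$, the top layer $H^p/K_{k-1}$ is elementary abelian with trivial $H^p$-action, $\alpha$ acts on it by a genuine coordinate permutation (the inner wrap-around induces the identity on this abelian quotient), and the diagonal is a proper nonzero $G$-submodule, yielding ${\rm chiefl}(G) \geqs (k-1)+2+1 = k+2$ as you intended. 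By contrast, the paper argues by induction on $\Omega(|H|)$: it takes a minimal normal subgroup $N$ of $H$, forms $M = \prod_i N^{\alpha^i} \lhd G$ (note the same twist), passes to $G/M \cong (H/N)^p.p$, and finishes with Kohler's theorem; your argument, once corrected, is an unrolled, non-inductive version that exhibits all $k+2$ chief factors at once, with the paper's base case $(C_q)^p.p$ playing the role of your top-layer computation.
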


\begin{proof}
We proceed by induction on $\O(|H|)$, the number of prime factors of $|H|$ (counting multiplicities). For the base case, $H$ has prime order $q$. The chief length of $G = (C_q)^p.p$ is more than 2, so by Lemma \ref{kolem}(i), we have $\l(G) \geqs 3 = \l(H)+2$ in this case. 

Now assume that $|H|$ is not prime, and let $N$ be a minimal normal subgroup of $H$. Then $H/N \ne 1$, and by Lemma \ref{kolem}(i) we have $\l(H/N) = \l(H)-1$.
Let $M = \prod_{i=0}^{p-1} N^{\a^i}$. Then $1 \ne M \triangleleft G$, and $\l(G) \geqs \l(G/M)+1$, again by Lemma \ref{kolem}(i). Applying the induction hypothesis to $G/M \cong (H/N)^p.p$, we have
\[
\l(G/M) \geqs \l(H/N)+2.
\]
It follows that $\l(G) \geqs \l(H/N)+3 = \l(H)+2$, as required.
\end{proof}

The next lemma on the length of ${\rm L}_{2}(q)$ will be useful later.

\begin{lem}\label{l:easy}
Let $G = {\rm L}_{2}(q)$, where $q=p^f \geqs 5$ and $p$ is a prime.
\begin{itemize}\addtolength{\itemsep}{0.2\baselineskip}
\item[{\rm (i)}] If $q$ is even, then $l(G) = \Omega(q-1)+f+1$.
\item[{\rm (ii)}] If $q$ is odd, then either
\begin{equation}\label{e:le}
l(G) = \max\{\O(q-1)+f, \O(q+1)+1\},
\end{equation}
or $q \in \{7,11,19,29\}$ and $l(G)=5$, or $q=5$ and $l(G)=4$.
\end{itemize}
\end{lem}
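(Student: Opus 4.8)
The plan is to compute the length of $G = {\rm L}_2(q)$ directly from the subgroup structure, using Lemma~\ref{add}(i) together with the classification of maximal subgroups of ${\rm L}_2(q)$ (due to Dickson). First I would recall that the maximal subgroups of $G$ are, up to conjugacy: the Borel subgroup $B = \mathbb{F}_q{:}C_{(q-1)/d}$ (a point stabiliser in the natural action on $q+1$ points, where $d=(2,q-1)$), the dihedral groups $D_{2(q-1)/d}$ and $D_{2(q+1)/d}$, the subfield and imprimitive-type subgroups, and the exceptional subgroups $A_4$, $S_4$, $A_5$ arising for small or special $q$. Since an unrefinable chain of maximal length must begin with a maximal subgroup $M$ of maximal possible length, the key identity is
\[
l(G) = 1 + \max_M l(M),
\]
the maximum taken over (representatives of) maximal subgroups $M$ of $G$. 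The strategy is therefore to compute $l(M)$ for each type of maximal subgroup and take the maximum.

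For the even case (i), I would argue that the relevant maximal subgroups are the Borel subgroup $B = (C_p)^f {:} C_{q-1}$ and the dihedral subgroups $D_{2(q-1)}$, $D_{2(q+1)}$, together with any ${\rm L}_2(q_0)$ subfield subgroups. Each of these is soluble except possibly the subfield subgroups, so by Lemma~\ref{add}(i) (or the remark following it) its length is just $\O$ of its order. One computes $l(B) = f + \O(q-1)$ since $|B| = p^f(q-1)$, and $l(D_{2(q\pm 1)}) = 1 + \O(q\pm 1)$. The claim is that $B$ always achieves the maximum, giving $l(G) = 1 + f + \O(q-1)$; to justify this I would note $f + \O(q-1) \geqs 1 + \O(q-1)$ trivially, that $\O(q+1) < \O(q-1) + f$ can be checked (using that $q-1$ and $q+1$ are coprime in the even case and a short size comparison), and that any subfield subgroup ${\rm L}_2(q_0)$ with $q = q_0^r$ has length strictly smaller by induction. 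This yields the clean formula in (i).

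For the odd case (ii) the same method applies but with $d = 2$, so $B = (C_p)^f{:}C_{(q-1)/2}$, the dihedral maximals become $D_{q-1}$ and $D_{q+1}$, and crucially the exceptional subgroups $A_5$ (present when $q \equiv \pm 1 \imod 5$ or $q=p$ with suitable congruence) and $S_4$ can now occur and are insoluble or have large length relative to small $q$. Computing $l(B) = f + \O((q-1)/2) = f + \O(q-1) - 1$ and $l(D_{q\mp 1}) = 1 + \O(q\mp 1)$, the competition between the Borel contribution $\O(q-1)+f$ (via $B$, after adding the leading $1$ one gets $\O(q-1)+f$, accounting for the factor of $2$ pulled out) and the split-torus contribution $\O(q+1)+1$ (via $D_{q+1}$) produces the maximum in \eqref{e:le}. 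The finitely many exceptional cases $q \in \{5,7,11,19,29\}$ are exactly those where an $A_5$ or $S_4$ maximal subgroup (or a coincidence of small parameters) makes $l(G)$ exceed or differ from the generic formula; these I would handle by direct inspection of the maximal subgroup lattice, e.g.\ using $l(A_5)=4$, $l(S_4)=4$, and known length values.

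The main obstacle will be the odd case (ii): ruling out that the exceptional maximal subgroups $A_5$ and $S_4$ contribute a longer chain than $B$ or $D_{q\pm 1}$ in the \emph{generic} range, and pinning down precisely the small exceptions $\{5,7,11,19,29\}$. Since $l(A_5) = l(S_4) = 4$ is a fixed constant while $\O(q-1)+f$ and $\O(q+1)+1$ grow, the exceptional subgroups can only dominate when $q$ is small, so the obstacle is really a careful finite check rather than a conceptual difficulty; the bookkeeping of the factor $d=2$ (so that $\O((q\mp1)/2)$ rather than $\O(q\mp 1)$ enters) is where sign errors are most likely to creep in and must be done with care.
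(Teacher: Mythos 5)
Your overall strategy --- computing $l(G)=1+\max_M l(M)$ over Dickson's list of maximal subgroups, evaluating the soluble ones via $\O$ of their orders, and handling subfield subgroups by induction --- is essentially the paper's argument (the paper shortcuts case (i) by citing \cite[Theorem 1]{ST}, the case $q=p$ by citing \cite[Proposition 5.2]{CST}, and organises the remaining cases as an induction on $\O(f)$, tracking the ${\rm PGL}_{2}(q^{1/2})$ contribution with a Kronecker delta). However, there is a genuine gap at the step where you dispose of the exceptional maximal subgroups $A_4$, $S_4$, $A_5$ in case (ii). You argue that since $l(A_5)=l(S_4)=4$ is a fixed constant ``while $\O(q-1)+f$ and $\O(q+1)+1$ grow, the exceptional subgroups can only dominate when $q$ is small'', so that only a careful finite check remains. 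This is false: $\max\{\O(q\pm1)\}$ does not tend to infinity with $q$. For prime $q>5$ one always has $\max\{\O(q\pm1)\}\geqs 3$, and the primes with $\max\{\O(q\pm1)\}=3$ --- exactly the primes at which the generic value $\max\{\O(q\pm1)\}+1=4$ in \eqref{e:le} could be beaten by a maximal $S_4$ or $A_5$ (which would contribute $1+4=5$) --- include $13, 43, 67, 173, 283,\ldots$; as the paper notes in Section \ref{s:intro}, it is not known whether there are infinitely many such primes (compare Theorem \ref{th:hb}, whose entire point is that $\max\{\O(p\pm1)\}$ stays bounded on an infinite set of primes). So the set of potentially bad $q$ cannot be enumerated, and your proposed finite check cannot even be set up.

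What actually closes the argument is arithmetic, not growth: one must intersect the condition $\max\{\O(q\pm1)\}\leqs 3$ with the congruence conditions under which the exceptional subgroups are maximal. If $S_4$ is maximal then $q\equiv \pm 1 \imod{8}$, so $8 \mid q\mp 1$ and hence $\O(q\mp1)\geqs 4$ unless $q\mp1=8$, i.e.\ $q=7$. If $A_5$ is maximal (with $q=p$ prime) then $q\equiv \pm 1\imod{10}$, so $10 \mid q\mp1$; combining this with the fact that $4$ divides one of $q\pm1$ and $3$ divides one of $q\pm1$, a short case analysis ($20\mid q\mp1$ forces $\O(q\mp1)\geqs 4$ unless $q\mp1=20$; $12\mid q\pm1$ forces $\O(q\pm1)\geqs 4$ unless $q\pm1=12$; $30\mid q\mp1$ forces $\O(q\mp1)\geqs 4$ unless $q\mp1=30$) leaves exactly $q\in\{11,19,29\}$. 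The subgroup $A_4$, of length $3$, never wins for prime $q>5$, since $\max\{\O(q\pm1)\}\geqs 3$ as noted above. This congruence analysis (or simply citing \cite[Proposition 5.2]{CST}, as the paper does) is the missing ingredient. Once it is in place, your induction for $f\geqs 2$ goes through, provided you also compare against the maximal subgroups ${\rm PGL}_{2}(q^{1/2})$ and ${\rm L}_{2}(q^{1/r})$ --- including for the exceptional base values $p\in\{3,5,7,11,19,29\}$, where $l({\rm L}_{2}(p))$ exceeds the generic formula --- which is exactly the bookkeeping the paper's Kronecker-delta term and its explicit check (e.g.\ $p=29$, $f=2$) carry out.
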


\begin{proof}
Part (i) is a special case of \cite[Theorem 1]{ST}, noting that $2^f{:}(2^f-1)$ is a Borel subgroup of $G$. Now assume $q$ is odd. The case $f=1$ follows from \cite[Proposition 5.2]{CST}, so let us assume $f \geqs 2$. We proceed by induction on $\O(f)$.

First assume $\O(f)=1$, so $f$ is a prime, and let $M$ be a maximal subgroup of $G$. By inspecting
\cite[Tables 8.1, 8.2]{BHR}, either $M = {\rm PGL}_{2}(p)$ or $A_5$ (for $f=2$ only), or
$M = p^f{:}((p^f-1)/2)$, $D_{p^f \pm 1}$ or ${\rm L}_{2}(p)$, which gives
\[
l(G) = \max\{\O(q-1)+f, \O(q+1)+1, l({\rm L}_{2}(p))+1+\delta_{2,f}\},
\]
where $\delta_{i,j}$ is the familiar Kronecker delta. It is easy to check that \eqref{e:le} holds if $p \in \{3,5,7,11,19,29\}$. For example, if $p=29$ and $f=2$, then $\O(q-1)=6$ and $l({\rm L}_{2}(p)) = 5$. For any other prime $p$,
\[
l({\rm L}_{2}(p)) +1+\delta_{2,f} = \max\{\O(p\pm 1)\}+2+\delta_{2,f} \leqs \max\{\O(q-1)+f, \O(q+1)+1\}
\]
and the result follows.

Similarly, if $\O(f) \geqs 2$ then
\[
l(G) = \max\{\O(q-1)+f, \O(q+1)+1, l({\rm L}_{2}(q^{1/r}))+1+\delta_{2,r} \,:\, r \in \pi(f)\},
\]
where $\pi(f)$ is the set of prime divisors of $f$, and induction gives
\[
l({\rm L}_{2}(q^{1/r})) = \max\{\O(q^{1/r}-1)+f/r, \O(q^{1/r}+1)+1\}.
\]
Therefore
\[
l({\rm L}_{2}(q^{1/r}))+1+\delta_{2,r} \leqs \max\{\O(q-1)+f, \O(q+1)+1\}
\]
and we conclude that \eqref{e:le} holds.
\end{proof}

\section{Depth}\label{proofs:depth}

In this section we prove our results on the depth of finite groups, namely Theorems \ref{depth3}--\ref{asdepth4}.

\subsection{Proof of Theorem \ref{depth3}}

Let $G$ be a finite group. For soluble groups, the theorem is an immediate corollary of Lemma \ref{kolem}(i), so let us assume $G$ is insoluble. Here Lemma \ref{kolem}(ii) implies that $\l(G) \geqs {\rm chiefl}(G)+2$. Hence if $\l(G)=3$, then ${\rm chiefl}(G)=1$ and thus $G$ is simple, and is as in Table \ref{tab:main} by \cite[Theorem 1]{BLS1}. Conversely, the groups in the table indeed have depth $3$.

\subsection{Proof of Theorem \ref{depth4}}

Suppose $G$ is a finite group and $\l(G)=4$. If $G$ is soluble then it has chief length $4$ by Lemma \ref{kolem}(i), as in part (i) of Theorem \ref{depth4}. Now assume $G$ is insoluble. Since $\l(G) \geqs {\rm chiefl}(G)+2$ by Lemma \ref{kolem}(ii), it follows that ${\rm chiefl}(G) \leqs 2$. If ${\rm chiefl}(G)=1$ then $G$ is simple, so (vi) holds.

Now assume that ${\rm chiefl}(G)=2$. Then $G$ has a minimal normal subgroup $N \cong T^k$ for some simple (possibly abelian) group $T$, and $G/N \cong S$ is simple (also
possibly non-abelian).

Suppose first that $k=1$ and $S,T$ are both non-abelian. Then $G \cong S\times T$ by the Schreier hypothesis. When $S\not \cong T$, any maximal subgroup of $G$ is of the form $S_0\times T$ or $S \times T_0$ (with $S_0,T_0$ maximal in $S,T$ respectively), and neither of these can have depth 3, by Theorem \ref{depth3}. Hence $S\cong T$. Now $G$ has a maximal subgroup $M$ of depth 3, and $M$ cannot be of the above form $S_0\times T$ or $S \times T_0$. It follows that $M$ is a diagonal subgroup isomorphic to $T$, and hence $\l(T) = 3$ and $G$ is as in conclusion (ii) of Theorem \ref{depth4}.

Next suppose $k=1$ and $S$ or $T$ is an abelian simple group $C_p$. Then $G$ is one of $T\times C_p$, $S\times C_p$, a quasisimple group $p.S$ or an almost simple group $T.p$. The latter two possibilities are conclusions (v) and (vi). Now assume $G = T\times C_p$, and let $M$ be a maximal subgroup of $G$ of depth 3. Then $M$ is either $T$ or $T_0\times C_p$, where $T_0$ is maximal in $T$. In the latter case Theorem \ref{depth3} shows that $T_0\times C_p$ is soluble of chief length 3, hence ${\rm chiefl}(T_0)=2$. Therefore in both cases $T$ has depth 3, and so $G$ is as in conclusion (ii).

We may now assume that $k>1$.  Suppose $T$ is non-abelian and $S = C_p$. Then $k=p$ and $G= N\la \a\ra = T^p\la \a\ra$, where $\a^p \in N$ and $\a$ permutes the $p$ factors transitively.
Let $M$ be a maximal subgroup of $G$ of depth $3$. Then $M \ne N$ by Theorem \ref{depth3}, so, replacing $\a$ by another element in the coset $N\a$ if necessary, $M$ is of the form $\prod_{i=0}^{p-1}H^{\a^i}\la \a\ra \cong H^p.p$ for some maximal subgroup $H$ of $T$. Also $M$ is soluble, again by Theorem \ref{depth3}. But now Lemma \ref{fab} implies that $\l(M) \geqs \l(H)+2 \geqs 4$, which is a contradiction, so this case does not arise.

Next consider the case where $T = C_p$ and $S$ is non-abelian. Here $G= (C_p)^k.S$, where $S$ acts irreducibly on $V:=(C_p)^k$. Let $M$ be a maximal subgroup of $G$ of depth 3. If $V \not \leqs M$ then $M$ maps onto $S$, hence $M\cong S$ by Theorem \ref{depth3}, and conclusion (iii) holds. Now assume $V \leqs M$, so that $M$ is soluble, by Theorem \ref{depth3}. Then $M/V = S_0$, a maximal subgroup of $S$, and by Lemma \ref{kolem}(i), $\l(S_0) < \l(M)=3$. Hence $\l(S)=3$ and again (iii) holds.

It remains to handle the case where both $T$ and $S$ are non-abelian. Here $G = T^k.S$ and $S$ acts transitively on the $k$ factors. Let $M$ be a maximal subgroup of $G$ of depth 3. Then $T^k\not \leqs M$ by Theorem \ref{depth3}. Hence $M$ maps onto $S$, so $M\cong S$, again by Theorem \ref{depth3}. In particular, $\l(S)=3$.  Write $\O = (G:M)$, the coset space of $M$ in $G$. As $M$ is a core-free maximal subgroup of $G$, it follows that $G$ acts primitively on $\O$ and $T^k$ is a regular normal subgroup. At this point the O'Nan-Scott theorem (see \cite{LPS}, for example) implies that $G$ is a twisted wreath product $T\,{\rm twr}_\phi\,S$, as in conclusion (iv) of Theorem \ref{depth4}.

\vs

This completes the proof of Theorem \ref{depth4}.

\subsection{Examples for Theorem \ref{depth4}}\label{ss:exd4}

Consider the nonsplit groups $G = (C_p)^k.T$ arising in part (iii) of Theorem \ref{depth4}. Here $T$ is a simple group of depth $3$ (so the possibilities for $T$ are given in Table \ref{tab:main}) and $V = (C_p)^k$ is a nontrivial irreducible module for $T$ over $\mathbb{F}_q$ with $q=p^f$ for some $f \geqs 1$. In particular, $\dim V \geqs 2$, $p$ divides $|T|$ and the second cohomology $H^2(T,V)$ is nontrivial. As noted in Remark \ref{r:1}(b), $G$ has a maximal subgroup $M = (C_p)^k.S$ with $\l(M)=3$, where $S<T$ is maximal and acts irreducibly on $V$. Note that $S$ is soluble and has chief length $2$. It will be difficult to give a complete classification of the depth $4$ groups of this form, but we can identify some genuine examples:

\vspace{1mm}

\noindent \emph{Example.} Let $T = {\rm M}_{23}$, so $S = 23{:}11$. Now $T$ has an $11$-dimensional irreducible module $V$ over $\mathbb{F}_2$. Moreover, one checks that $S$ acts irreducibly on $V$ and $H^2(T,V) \ne 0$, hence there is a nonsplit group $2^{11}.{\rm M}_{23}$ of depth $4$.

\vspace{1mm}

\noindent \emph{Example.} Take $T = A_5$, $S = A_4$ and let $V$ be a $3$-dimensional irreducible module for $T$ over $\mathbb{F}_5$. Then $S$ acts irreducibly on $V$ and $H^2(T,V) \ne 0$, so there is a nonsplit group $5^{3}.A_5$ of depth $4$.

\vspace{1mm}

\noindent \emph{Example.} Suppose $T = {\rm L}_{n}(r)$, where $n \geqs 3$ is a prime and $(n,r-1)=1$, so
$S = \left(\frac{r^n-1}{r-1}\right){:}n$.
Let $V$ be the natural module for $T = {\rm SL}_{n}(r)$. Then $S$ acts irreducibly on $V$, and a theorem of Bell \cite{Bell} implies that $H^2(T,V) \ne 0$ if and only if
\[
(n,r) \in \{(3,3^a>3), (3,2), (3,5), (4,2), (5,2)\}.
\]
In particular, there is a nonsplit group $3^{9}.{\rm L}_{3}(27)$ of depth $4$ (note that we need $\l({\rm L}_{3}(r)) = 3$, which in this case means that $r^2+r+1$ is a prime). Thanks to Bell's result, there are also nonsplit groups $2^3.{\rm L}_{3}(2)$, $5^3.{\rm L}_{3}(5)$ and $2^5.{\rm L}_{5}(2)$, each of which has depth $4$.

\subsection{Proof of Theorem \ref{depth4q}}\label{ss:d4q}

Let $G$ be a quasisimple group with nontrivial centre and depth $4$. Write $G/Z(G)=T$, a non-abelian simple group. By Theorem \ref{depth4}, we have $Z(G)=C_p$ for a prime $p$, so Lemma \ref{add}(iii) implies that $\l(T)=3$. Theorem \ref{depth4q} now follows by considering the Schur multipliers of the simple groups in Table \ref{tab:main} (see \cite[Theorem 5.1.4]{KL}, for example).

\subsection{Proof of Theorem \ref{asdepth4}}

Let $G$ be an almost simple group with $\l(G)=4$ and socle $T$. First assume $G \ne T$, so Theorem \ref{depth4} implies that $G/T = C_p$ for a prime $p$. If $\l(T)=3$, then we are in case (i) of Theorem \ref{asdepth4}. Now assume $\l(T) \geqs 4$. We claim that (ii) holds, so $\l(T) = 4$ and $(G,T)$ is one of the cases in Table \ref{tab:as1}.

To see this, let $M$ be a maximal subgroup of $G$ of depth $3$. Then $M \ne T$, so $G=TM$ and $M \cap T \triangleleft M$ has index $p$. By Theorem \ref{depth3}, $M$ is soluble and Lemma \ref{kolem}(i) implies that $\l(M \cap T) = 2$, so $M \cap T$ is not maximal in $T$. Therefore, $G$ has a novelty soluble maximal subgroup $M$ of chief length $3$. This property is highly restrictive and we can determine all the possibilities for $G$ and $M$.

First assume $T = A_n$ is an alternating group, so $G = A_n.2$. If $n=6$ then $\l(T) = 4$ and one checks that $\l({\rm PGL}_{2}(9)) = \l({\rm M}_{10}) = 4$, while  $\l(S_6) = 5$. Now assume $n\ne 6$, so that $G = S_n$. By part (I) of the main theorem of \cite{LPS1}, the novelty soluble maximal subgroups of $S_n$ are $S_2\wr S_4 < S_8$ and $C_p{:}C_{p-1}<S_p$ for $p \in \{7,11,17,23\}$. Of these, only $C_p{:}C_{p-1}$ for $p \in \{7,11,23\}$ have depth $3$, so $S_7$, $S_{11}$ and $S_{23}$ are the only depth $4$ groups arising in this case.

If $T$ is a sporadic group then $G = T.2$ and one checks (by inspection of the Atlas \cite{Atlas}) that $G$ does not have a maximal subgroup $M$ with the required properties.

Now assume $T$ is a simple group of Lie type over $\mathbb{F}_q$. If $T$ is an exceptional group of Lie type, then all the maximal soluble subgroups of $G$ are known (see \cite{CLSS, LSS}) and one checks that there are no relevant examples (it is helpful to note that if $T = {}^2B_2(q)$, ${}^2G_2(q)$, ${}^2F_4(q)$ or ${}^3D_4(q)$, then $G$ does not have any novelty maximal subgroups). Finally, suppose $T$ is a classical group. For the low-rank groups, it is convenient to consult the relevant tables in \cite{BHR}; in this way, one checks that the only cases that arise are the ones listed in Table \ref{tab:as1} (in each case, $\l(T)=4$). For example, if $G = {\rm PGL}_{2}(q)$, where $q$ is a prime and $q \equiv \pm 11, \pm 19 \imod{40}$, then $G$ has a maximal subgroup $M = S_4$ and $M \cap T = A_4$ is non-maximal in $T$ (note that the additional condition $\O(q \pm 1) \geqs 3$ is needed to ensure that $\l(T) \geqs 4$, which means that $\l(T)=4$ by \cite[Lemma 3.1]{BLS1}). By inspecting \cite{KL}, it is easy to check that no examples arise when $G$ is one of the remaining classical groups not covered by \cite{BHR}. We conclude that part (ii) of Theorem \ref{asdepth4} holds.

To complete the proof, we may assume $G=T$ has depth $4$. Let $M$ be a maximal subgroup of $G$ with $\l(M)=3$. By Theorem \ref{depth3}, either $M$ is simple (and we are in part (iv) of Theorem \ref{asdepth4}), or $M$ is soluble of chief length $3$. It remains to show that in the latter case, the possibilities for $G$ and $M$ are given in Table \ref{tab:as2}. To do this, we essentially repeat the above argument, but now there are more cases to consider because $G=T$ and there is no novelty condition.

First assume $G = A_n$. It is easy to verify the result for $n \leqs 16$ (with the aid of {\sc Magma}, for example), so let us assume $n \geqs 17$. By the O'Nan-Scott theorem (see \cite{LPS}), the only soluble maximal subgroups of $G$ are of the form $M = {\rm AGL}_{1}(p) \cap G = C_p{:}C_{(p-1)/2}$, with $n = p$ a prime. Here Lemma \ref{kolem}(i) implies that $\l(M) = \O(p-1)$, which explains the condition $\O(p-1)=3$ in Table \ref{tab:as2}.

Next assume $G$ is a sporadic group. The groups with $\l(G) = 4$ can be read off from \cite[Lemma 3.3]{BLS1} and the cases appearing in Table \ref{tab:as2} are obtained by inspecting the lists of maximal subgroups of $G$ in the Atlas \cite{Atlas}.

Finally suppose $G$ is a simple group of Lie type over $\mathbb{F}_q$. As noted above, if $G$ is an exceptional group then all of the soluble maximal subgroups of $G$ are known and it is routine to read off the cases with such a subgroup of depth $3$ (for $G = {}^2B_2(q)$, note that we need the extra condition $\O(q-1) \geqs 2$ to ensure that $\l(G)=4$). Similarly, the result for classical groups is obtained by carefully inspecting \cite{BHR} (for the low-rank groups) and \cite{KL} (in the remaining cases). Once again, extra conditions on $q$ are needed to get $\l(G)=4$.

\vs

This completes the proof of Theorem \ref{asdepth4}.

\subsection{Examples for Theorem \ref{asdepth4}}\label{ss:as}

It is not feasible to give a complete description of the simple groups $G$ of depth $4$ with a simple maximal subgroup $M$ of depth $3$, as in part (iv) of Theorem \ref{asdepth4}, but we can give some partial information.

\vspace{1mm}

\noindent \emph{Sporadic groups.} Let $G$ be a sporadic group and recall that $\l(G)$ is recorded in \cite[Table 2]{BLS1}. By inspecting the Atlas \cite{Atlas}, excluding the Monster group $\mathbb{M}$, it is easy to see that the pairs $(G,M)$ with $\l(G)=4$ and $M$ a simple maximal subgroup of depth $3$ are as follows: 
\[
\begin{array}{lllll}
({\rm M}_{11}, {\rm L}_{2}(11)) & ({\rm M}_{12}, {\rm L}_{2}(11)) & ({\rm M}_{22}, {\rm L}_{2}(11)) & ({\rm M}_{24}, {\rm L}_{2}(7)) & ({\rm M}_{24}, {\rm L}_{2}(23)) \\
({\rm M}_{24}, {\rm M}_{23}) & ({\rm J}_{1}, {\rm L}_{2}(11)) & ({\rm J}_{2}, A_5) & ({\rm Suz}, {\rm L}_{2}(25)) & ({\rm Co}_2, {\rm M}_{23}) \\
({\rm Co}_3, {\rm M}_{23}) & ({\rm Fi}_{23}, {\rm L}_{2}(23)) & ({\rm Th},{\rm L}_{3}(3)) & &
\end{array}
\]
The pair $(\mathbb{M},{\rm L}_{2}(59))$ is another example (in particular, the Monster has depth $4$), but a complete list of the simple maximal subgroups of $\mathbb{M}$ of depth $3$ is not available.

\vspace{1mm}

\noindent \emph{Alternating groups.} Let $G = A_n$ be an alternating group. With the aid of {\sc Magma}, it is easy to check that for $n \leqs 100$, the possibilities for $(M,n)$ are as follows:
\[
\begin{array}{llllll}
(A_5,6) & ({\rm L}_{2}(7),7) & ({\rm L}_{3}(3),13) & ({\rm L}_{2}(13),14) & ({\rm M}_{23},23) & ({\rm L}_{3}(5),31) \\
({\rm L}_{5}(2),31) & ({\rm L}_{2}(37),38) & ({\rm L}_{2}(43),44) & ({\rm L}_{2}(47),48) & (A_{47},48) & ({\rm L}_{2}(53),54) \\
({\rm L}_{2}(59),60) & (A_{59},60) & ({\rm L}_{2}(61),62) & ({\rm L}_{2}(25),65) & ({\rm L}_{2}(67),68) & ({\rm L}_{2}(73),74) \\
({\rm L}_{2}(13),78) & ({\rm L}_{2}(83),84) & (A_{83},84) & (A_{87},88) & &
\end{array}
\]
The main theorem of \cite{LPS} on the maximal subgroups of symmetric and alternating groups provides some useful information in the general case, but it is not possible to state a precise result.

\vspace{1mm}

\noindent \emph{Exceptional groups.} Let $G$ be an exceptional group of Lie type over $\mathbb{F}_q$. If $G = {}^2B_2(q)$, ${}^2G_2(q)$, ${}^2F_4(q)'$, ${}^3D_4(q)$ or
$G_2(q)$, then the maximal subgroups of $G$ are known and one can read off the relevant examples $M$ with $M$ simple of depth 3: either $(G,M)$ is one of 
$({}^2F_4(2)', {\rm L}_{2}(25))$, $(G_2(3), {\rm L}_{2}(13))$, $(G_2(4), {\rm L}_{2}(13))$,
or 
\begin{itemize}\addtolength{\itemsep}{0.2\baselineskip}
\item $G = {}^2B_2(q)$ and $M={}^2B_2(q_0)$ with $q=q_0^k$, $q_0>2$ and both $k$ and $q_0-1$ are primes; or
\item $G = G_2(q)$ with $q=p^f$ for a prime $p \geqs 5$, $M = {\rm L}_{2}(13)$ or ${\rm L}_{2}(8)$, and the precise conditions on $q$ for the maximality of $M$ are given in  \cite[Table 8.41]{BHR}.
\end{itemize}
In the remaining cases, by combining results of Liebeck and Seitz \cite{LS99} with recent work of Craven \cite{Craven}, we deduce that there are no examples with $M$ an alternating or sporadic group. Strong restrictions on the remaining possibilities when $M$ is a group of Lie type can be obtained by applying \cite[Theorem 8]{LS_Durham} (defining characteristic) and the main theorem of \cite{LS99} (non-defining characteristic).

\vspace{1mm}

\noindent \emph{Classical groups.} Finally, suppose $G$ is a simple classical group with natural $n$-dimensional module $V$. By Aschbacher's subgroup structure theorem \cite{asch}, either $M$ belongs to a collection $\C(G)$ of \emph{geometric} subgroups, or $M \in \mathcal{S}(G)$ is almost simple and acts irreducibly on $V$. By inspecting \cite{BHR,KL}, it is possible to determine the relevant examples with $M \in \C(G)$ simple of depth 3 (the precise list of cases will depend on some delicate number-theoretic conditions). For example, suppose $G = {\rm U}_n(q)$ is a unitary group, where $n=q+1$ and $q \geqs 5$ is a prime. If $(q^{n-1}+1)/(q+1)$ is also a prime, then $G$ has a simple maximal subgroup $M = {\rm U}_{n-1}(q)$ of depth $3$ (here $M$ is the stabiliser of a non-degenerate $1$-space). For instance, $G = {\rm U}_{6}(5)$ has a maximal subgroup $M = {\rm U}_{5}(5)$ of depth $3$. It is not feasible to determine all the cases that arise with $M \in \mathcal{S}(G)$, although this can be achieved for the low-dimensional classical groups (that is, the groups with $n \leqs 12$) by inspecting the relevant tables in \cite[Chapter 8]{BHR}.

\section{Length} \label{sec:length}

In this section we prove our main results on length, namely Theorems \ref{t:length} and \ref{t:inf}, and Corollary \ref{c:small}. We begin by recalling some of the main results from the literature on the lengths of simple groups. 

The length of each alternating group is given by \cite[Theorem 1]{CST}, which states that  
\begin{equation}\label{e:an}
l(A_n) = \left\lfloor \frac{3n-1}{2}\right\rfloor - b_n - 1,
\end{equation}
where $b_n$ is the number of ones in the base $2$ expansion of $n$. Similarly, the length of each sporadic simple group is presented in \cite[Tables III and IV]{CST} (given more recent advances in our understanding of the maximal subgroups of sporadic groups, it is easy to verify that the ``Probable Values" recorded in \cite[Table IV]{CST} are correct). 

Now let $G$ be a finite simple group of Lie type over $\mathbb{F}_q$, where $q=p^f$ for a prime $p$. Let $r$ be the twisted Lie rank of $G$ and let $B$ be a Borel subgroup. By considering a descending chain of subgroups passing through $B$, it follows that 
\[
l(G) \geqs l(B)+r = \O(|B|)+r
\]
noting that $B$ is soluble. More precisely, if $p=2$ then \cite[Theorem 1]{ST} gives
\[
l(G) = l(B)+r+\e,
\]
where $\e=1$ if $G \cong {\rm U}_{2r+1}(2)$, otherwise $\e=0$. By \cite[Theorem A*]{ST2}, the same conclusion holds if $p>2$ and $q$ is sufficiently large.

Turning to Theorem \ref{t:length}, let $G$ be a non-abelian finite simple group. First recall that $\l(G) \geqs 3$ and ${\rm cd}(G) \geqs 1$, so $l(G) \geqs 4$.  The simple groups of length $4$ were classified by Janko \cite[Theorem 1]{Jan}. In a second paper \cite{Jan2}, he proved that every simple group of length $5$ is of the form ${\rm L}_{2}(q)$ for some prime power $q$ (but he did not give any further information on the prime powers that arise). In later work of Harada \cite{Harada}, this result was extended to simple groups of length at most $7$.

\begin{thm}[Harada, \cite{Harada}]\label{t:har}
Let $G$ be a finite simple group with $l(G) \leqs 7$. Then either
\begin{itemize}\addtolength{\itemsep}{0.2\baselineskip}
\item[{\rm (i)}] $G = {\rm U}_3(3), \, {\rm U}_3(5), \, A_7, \, {\rm M}_{11}, \, {\rm J}_1$; or
\item[{\rm (ii)}] $G = {\rm L}_{2}(q)$ for some prime power $q$.
\end{itemize}
\end{thm}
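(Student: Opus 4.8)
The plan is to apply the Classification of Finite Simple Groups and to treat the alternating, sporadic and Lie type families separately. In each family I would exhibit a lower bound for $l(G)$ that grows with the natural size parameter, so that only finitely many candidates can have $l(G) \leqs 7$; these survivors are then resolved by computing their length directly. The key tool for the Lie type groups is the bound $l(G) \geqs \O(|B|) + r$ recorded above, where $B$ is a Borel subgroup and $r$ is the twisted Lie rank.

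For the alternating groups the exact formula \eqref{e:an} gives $l(A_n) = \lfloor (3n-1)/2 \rfloor - b_n - 1$, which grows essentially linearly in $n$; a short calculation yields $l(A_5)=4$, $l(A_6)=5$, $l(A_7)=6$ and $l(A_n) \geqs 9$ for all $n \geqs 8$, so $l(A_n) \leqs 7$ exactly when $n \in \{5,6,7\}$. Since $A_5 \cong {\rm L}_{2}(4)$ and $A_6 \cong {\rm L}_{2}(9)$ already fall under part (ii), the only new alternating example is $A_7$. For the sporadic groups the length of each is listed in the tables of \cite{CST}; reading these off shows that ${\rm M}_{11}$ and ${\rm J}_{1}$ are the only sporadic groups with $l(G) \leqs 7$, both appearing in part (i).

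The heart of the proof concerns the groups of Lie type over $\F$ with $q = p^f$. Since the unipotent radical of $B$ has order $q^N$, where $N$ is the number of positive roots, we have $\O(|B|) \geqs Nf$, and hence $l(G) \geqs Nf + r$. As $r$ and $N$ grow with the rank (indeed $N$ grows quadratically), this forces $l(G) > 7$ for all groups of rank beyond a small bound; and for each fixed group of small rank the term $Nf$ forces $l(G) > 7$ once $f$, or $q$, is large. Only finitely many pairs $(G,q)$ therefore survive. Setting aside the ${\rm L}_{2}(q)$, which are all permitted in part (ii), one examines the remaining rank-$1$ and rank-$2$ candidates case by case: for ${\rm U}_{3}(q) = {}^2\!A_2(q)$, analysing $|B| = q^3(q^2-1)/(3,q+1)$ leaves precisely ${\rm U}_{3}(3)$ and ${\rm U}_{3}(5)$, each of length $7$, while the Suzuki groups ${}^2\!B_2(q)$ are eliminated since the even-characteristic formula of \cite{ST} already gives $l({}^2\!B_2(8)) = 8$, and ${}^2\!G_2(q)$, $G_2(q)$ and all higher-rank groups exceed $7$ by the same bound. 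The exact lengths of the finitely many remaining small groups are computed from their maximal subgroups via \cite{BHR, KL} (and Lemma \ref{l:easy} for any ${\rm L}_{2}(q)$), confirming that no further examples arise.

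The main obstacle is the precise bookkeeping for the small-rank groups near the boundary $l(G)=7$. Because $l(G) \geqs \O(|B|) + r$ is only a lower bound, the borderline values of $q$ cannot be decided from the Borel subgroup alone: one must identify an actual longest chain, which requires accurate maximal subgroup information. Pinning down the exact cut-off for ${\rm U}_{3}(q)$ is the most delicate point, since for $q \in \{3,5\}$ the small torus factors $q^2-1$ (after division by $(3,q+1)$) and the contribution $3\O(q)$ from $q^3$ combine to give $\O(|B|) = 6$ exactly, so that $l = 7$ is attained; confirming that these groups have length exactly $7$, and that no nearby $q$ slips below $8$, is where the arithmetic is most sensitive.
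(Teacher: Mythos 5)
The first thing to note is that the paper does not prove this statement at all: it is quoted verbatim from Harada \cite{Harada}, whose 1968 argument predates the Classification and proceeds by entirely different (local, pre-CFSG) methods. Your CFSG-based strategy is therefore not comparable with any in-paper proof of Theorem \ref{t:har} itself; what it does match, almost exactly, is the paper's own proof of the stronger Theorem \ref{t:length} in Section \ref{sec:length}: the formula \eqref{e:an} for alternating groups, the tables of \cite{CST} for sporadic groups, and the Borel-subgroup bound $l(G) \geqs \O(|B|) + r$ together with \cite[Theorem 1]{ST} for groups of Lie type, with the finitely many survivors settled by inspecting maximal subgroups. So your route is the modern one, and in outline it is sound; your treatment of $A_n$, the sporadic groups, ${\rm U}_{3}(q)$ and ${}^2B_2(q)$ is correct (for instance, the Borel bound does eliminate every ${\rm U}_{3}(q)$ with $q \notin \{3,5\}$, and $l({}^2B_2(8)) = \O(7)+2\cdot 3+1 = 8$).

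There is, however, a concrete gap in your low-rank bookkeeping. Your explicit list of rank $\leqs 2$ candidates omits ${\rm L}_{3}(q)$ and ${\rm PSp}_{4}(q)$, and the claim that everything not listed ``exceeds $7$ by the same bound'' fails precisely there: for $G = {\rm L}_{3}(3)$ the Borel subgroup has order $27 \cdot 4 = 2^2\cdot 3^3$, so $\O(|B|)+r = 5+2 = 7$, and for $G = {\rm PSp}_{4}(3)$ one gets $\O(|B|)+r = \O(2\cdot 3^4)+2 = 7$ as well. Thus the Borel filter does not eliminate these two groups, and since neither appears in conclusion (i) or (ii), each must be eliminated by exhibiting a chain of length at least $8$: for ${\rm L}_{3}(3)$ the soluble point stabiliser $3^2{:}{\rm GL}_{2}(3)$ of order $2^4\cdot 3^3$ has length $7$, giving $l({\rm L}_{3}(3)) \geqs 8$, and for ${\rm PSp}_{4}(3)$ the maximal subgroup $2^4.A_5$ gives $l({\rm PSp}_{4}(3)) \geqs 1+4+l(A_5) = 9$ (this is exactly how the paper argues in Lemmas \ref{l:l8} and \ref{l:l9}). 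Relatedly, you mis-identify the delicate point of the proof: since ${\rm U}_{3}(3)$ and ${\rm U}_{3}(5)$ are permitted by conclusion (i), nothing about their exact lengths needs to be settled to prove the stated implication; what must be shown is that every simple group outside (i) and (ii) has length at least $8$, and the sensitive cases are exactly the off-list survivors of the Borel filter, namely ${\rm L}_{3}(3)$ and ${\rm PSp}_{4}(3)$. Your closing catch-all (``compute the exact lengths of the finitely many remaining small groups'') would cover these once they are actually named, so the gap is repairable, but as written the case analysis is incomplete at the only genuinely borderline groups.
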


For the groups in part (i) of Theorem \ref{t:har}, it is easy to check that $A_7$ and ${\rm J}_1$ have length $6$, the others have length $7$. 

\subsection{Proof of Theorem \ref{t:length}}\label{s:len}

We are now ready to prove Theorem \ref{t:length}. In Lemma \ref{l:psl2} we first deal with the groups ${\rm L}_{2}(q)$ in all lengths, and then we handle the remaining simple groups, considering lengths $8$ and $9$ separately (see Lemmas \ref{l:l8} and \ref{l:l9}).

\begin{lem}\label{l:psl2}
Theorem \ref{t:length} holds if $G \cong {\rm L}_{2}(q)$.
\end{lem}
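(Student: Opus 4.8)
The plan is to establish Theorem \ref{t:length} for $G \cong {\rm L}_{2}(q)$ by directly exploiting the length formula already provided in Lemma \ref{l:easy}, which expresses $l(G)$ in terms of $\O(q-1)$, $\O(q+1)$ and the field extension degree $f$. The strategy reduces the classification to a purely number-theoretic bookkeeping exercise: for each target value $\ell \in \{4,5,6,7,8,9\}$, I determine exactly which triples $(p,f,\text{residue conditions})$ force $l(G) = \ell$, and check these against the entries of Table \ref{tab:length}.

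First I would split into the even and odd characteristic cases, matching Lemma \ref{l:easy}. When $q = 2^f$, the formula $l(G) = \O(q-1)+f+1$ is an exact identity, so I simply solve $\O(2^f-1)+f+1 = \ell$ for each $\ell$; here the small sporadic overlaps (such as ${}^2B_2(8)$ at length $8$, recorded separately in the table) do not interfere since $G$ is ${\rm L}_{2}$. When $q = p^f$ is odd, I use the main formula \eqref{e:le}, namely $l(G) = \max\{\O(q-1)+f,\,\O(q+1)+1\}$, together with the finite list of exceptional small cases $q \in \{5,7,11,19,29\}$ where \eqref{e:le} fails. The bulk of the argument is then to enumerate, for each $\ell$, all ways the maximum can equal $\ell$: I treat $f=1$ (so $q=p$ prime, and the condition becomes $\max\{\O(p\pm 1)\} = \ell-1$, with the extra $q \equiv \pm 3, \pm 13 \imod{40}$ congruence entering only at the low lengths where the $A_5$ subgroup is relevant), then $f=2$, $f=3$, $f=5$ and so on, noting that larger $f$ is impossible once $f+2 > \ell$ since $\O(q-1)+f \geqs f+1$ and $\O(q+1) \geqs 1$ force a lower bound growing with $f$.

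The routine but essential checks are: verifying that the congruence conditions modulo $40$ correctly capture when the maximal subgroup $A_5$ (equivalently ${\rm PGL}_2$-type obstructions) contributes to or caps the length at the borderline lengths $4$ and $9$, and confirming the precise prime-factorisation constraints $\O(q-1)$, $\O(q+1)$ in each row of the table. For the composite-degree cases I would invoke the inductive structure already built into Lemma \ref{l:easy}, so that no new subgroup analysis is needed beyond reading off maxima. Each resulting condition is then compared line-by-line with Table \ref{tab:length}, and the finitely many exceptional small $q$ are handled by direct inspection (for instance $q=9$ gives length $4$ and must be listed, while $q \in \{7,11,19,29\}$ give length $5$).

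The main obstacle I anticipate is not conceptual but combinatorial: keeping the case analysis over $(f, \O(q-1), \O(q+1))$ complete and non-redundant, especially for lengths $8$ and $9$ where several distinct $(f, \O(q\mp 1))$ profiles yield the same total length (the table already shows five or seven subcases). The delicate points are ensuring that the $\max$ is achieved by the intended term (so that, e.g., a large $\O(q+1)$ does not secretly push the length above the target), and correctly excluding the sporadic exceptional $q$ from the generic formula before applying it. I would organise this as a short table-driven verification, checking each row of Table \ref{tab:length} against the formula and, conversely, confirming that no value of $q$ outside the table can produce $l(G) \leqs 9$.
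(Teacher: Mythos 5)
You follow the same route as the paper: its proof of this lemma is precisely a reduction to Lemma \ref{l:easy} (which disposes of $p=2$ and $f=1$ immediately), followed by a case analysis over $f$ for odd $q=p^f$ with $f \geqs 2$ --- the paper pins down $p \geqs 7$ and $f \in \{2,3,5\}$ using $\Omega(p^2-1) \geqs 5$, which is your growth-in-$f$ bound made precise. So the skeleton of your plan is the right one.

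However, your description of the remaining work as ``purely number-theoretic bookkeeping'' and a ``table-driven verification'' conceals the step where the paper's actual effort lies, and as written your plan would stall there. An enumeration of the profiles $(f,\Omega(q-1),\Omega(q+1))$ compatible with \eqref{e:le} produces conditions that do \emph{not} appear in Table \ref{tab:length}: for instance, at length $7$ it produces ``$q=p^2$, $\Omega(q-1)=5$, $\Omega(q+1)\leqs 6$'' and ``$q=p^3$, $\Omega(q-1)=3$, $\Omega(q+1)=6$'', whereas the table instead lists the explicit values $q\in\{49,121,169\}$ and omits the second profile entirely. To prove the lemma one must show that the first profile forces $p\in\{7,11,13\}$ (the paper: writing $(p+1)/2=2r$, the integers $r$, $2r-1$, $4r-1$ cannot all be prime for $r\geqs 5$, by working modulo $3$) and that the second profile is empty (the paper: $7\mid p^6-1$ forces $7\mid p+1$, hence $p=83$, and then $p^2+p+1$ is composite); a similar congruence argument shows $l(G)=6$ is impossible for $q=p^3$ with $p\geqs 7$, which is why no such row exists at length $6$. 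These prime-constellation exclusions, not the enumeration of maxima, are the substance of the paper's proof, and nothing in your proposal supplies them or acknowledges that they go beyond routine checking; without them, neither your forward matching of rows nor your converse claim that no $q$ outside the table has $l(G)\leqs 9$ can be completed.
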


\begin{proof}
Write $q=p^f$, where $p$ is a prime. In view of Lemma \ref{l:easy}, the result is clear if $p=2$ or $f=1$, so let us assume $p \geqs 3$ and $f \geqs 2$, in which case
\[
l(G) = \max\{\O(q-1)+f, \O(q+1)+1\}.
\]
Since $l(G) \leqs 9$, it follows that $f \leqs 7$ and it is easy to verify the result when $p \in \{3,5\}$. Now assume $p \geqs 7$, in which case $\O(p^2-1) \geqs 5$ and $f \in \{2,3,5\}$.

Suppose $f=5$, so $\O(q-1) \geqs 3$ and we have $l(G)\in \{8,9\}$. For $l(G)=8$ we must have $\O(q-1)=3$ and $\O(q+1) \leqs 7$; one checks that there are primes $p$ with these properties:
\[
p \in \{3,7,23,83,263,1187, \ldots\}.
\]
Similarly, for $l(G)=9$ we need $\O(q-1)=4$ and $\O(q+1) \leqs 8$, or $\O(q-1)=3$ and $\O(q+1)=8$; in both cases, there are primes satisfying these conditions.

Next consider the case $f=3$, so $\O(q-1) \geqs 3$ and $l(G) \in \{6,7,8,9\}$. First assume $l(G)=6$, so $\O(q-1) = 3$ and thus $(p-1)/2$ and $p^2+p+1$ are both primes. In particular, $p \equiv -1 \imod{12}$ and $p>11$. Therefore, $\O(p+1) \geqs 4$ and $p^2-p+1$ is divisible by $3$, hence $\O(q+1) \geqs 6$ and we have reached a contradiction.
Next assume $l(G)=7$, so $\O(q-1)=3$ or $4$. If $\O(q-1)=3$ then we need $\O(q+1)=6$, which forces $\O(p+1)=4$ and $p^2-p+1=3r$ for some prime $r$. But $7$ divides $p^6-1$, so $7$ must divide $p+1$ and thus $p=83$ is the only possibility. But then  $p^2+p+1$ is composite, so this case does not arise. However, there are primes
\[
p \in \{7,11,83,1523, 20507, 28163, \ldots \}
\]
satisfying the conditions $\O(q-1)=4$ and $\O(q+1) \leqs 6$, so this case is recorded in Table \ref{tab:length}. Similarly, if $l(G)=m \in \{8,9\}$ then either $\O(q-1)=m-3$ and $\O(q+1) \leqs m-1$, or $\O(q-1) \leqs m-4$ and $\O(q+1)=m-1$. Moreover, one can check that there are primes $p$ satisfying these conditions.

Finally, let us assume $f=2$. Here the condition $p \geqs 7$ implies that $\O(q-1) \geqs 5$, so $l(G) \in \{7,8,9\}$. Suppose $l(G)=7$. Here $\O(q-1) = 5$ and either
$(p-1)/2$ or $(p+1)/2$ is a prime. Suppose $(p-1)/2$ is a prime, so $p \equiv 3 \imod{4}$ and $(p+1)/2 = 2r$ for some prime $r$. Therefore, $r$, $2r-1$ and $4r-1$ are all primes. If $r \in \{2,3\}$ then $p \in \{7,11\}$ and one checks that $l(G)=7$. Now assume $r \geqs 5$. If $r \equiv 1 \imod{3}$ then $4r-1$ is divisible by $3$. Similarly, if $r \equiv 2 \imod{3}$ then $3$ divides $2r-1$, so there are no examples with $r \geqs 5$. A similar argument applies if we assume $(p+1)/2$ is a prime: here we need a prime $r$ such that $2r+1$ and $4r+1$ are also primes, and one checks that $r=3$ is the only possibility, which corresponds to the case $G = {\rm L}_{2}(169)$ with $l(G)=7$.

Next assume $l(G)=8$ and $f=2$, so $\O(q-1)=5$ or $6$. The case $\O(q-1)=5$ is ruled out by arguing as in the previous paragraph. On the other hand, if $\O(q-1)=6$ then we need $\O(q+1) \leqs 7$ and there are primes $p$ with these properties. Finally, let us assume $l(G)=9$, so $\O(q-1) \in \{5,6,7\}$. The case $\O(q-1)=5$ is ruled out as above, whereas there are primes $p$ such that $\O(q-1)=7$ and $\O(q+1) \leqs 8$, or $\O(q-1)=6$ and $\O(q+1)=8$.
\end{proof}

In view of Theorem \ref{t:har}, it remains to determine the simple groups $G \not\cong {\rm L}_{2}(q)$ with $l(G)=8$ or $9$.

\begin{lem}\label{l:l8}
Theorem \ref{t:length} holds if $l(G)=8$.
\end{lem}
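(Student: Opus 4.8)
The plan is to reduce Lemma~\ref{l:l8} to a finite, checkable problem by combining the Classification with the known length formulas. First I would observe that by Theorem~\ref{t:har} every simple group with $l(G)\leqs 7$ is already identified, so a group $G$ with $l(G)=8$ cannot be any of the groups listed there; in particular, having excluded ${\rm L}_{2}(q)$ by Lemma~\ref{l:psl2}, the remaining candidates are alternating groups $A_n$, sporadic groups, and simple groups of Lie type other than ${\rm L}_{2}(q)$. The alternating and sporadic cases are handled directly from the explicit data: formula~\eqref{e:an} pins down exactly which $A_n$ have length $8$ (a short arithmetic check on $\lfloor (3n-1)/2\rfloor - b_n - 1 = 8$), and the sporadic groups are read off from \cite[Tables III and IV]{CST}, which yields ${\rm M}_{12}$.

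For the groups of Lie type the strategy is to use the lower bound $l(G)\geqs \O(|B|)+r$, where $B$ is a Borel subgroup and $r$ is the twisted rank, together with the exact value $l(G)=l(B)+r+\e$ valid in characteristic $2$ (by \cite[Theorem~1]{ST}) and for large $q$ in odd characteristic (by \cite[Theorem~A*]{ST2}). Since $|B|$ grows quickly with the rank and with $q$, the constraint $l(G)=8$ forces both $r$ and $q$ to be very small, leaving only finitely many $(G,q)$ to examine. I would first bound the rank: the Borel contains a maximal torus and a full unipotent subgroup, so $\O(|B|)$ is bounded below by a function that exceeds $8-r$ for all but the smallest-rank groups, eliminating all high-rank families outright. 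For the surviving low-rank families one computes $\O(|B|)+r$ explicitly as a function of $q$ and $p$, then checks the finitely many small $q$ by hand or with {\sc Magma}, producing the entries ${}^2B_2(8)$ and ${\rm L}_{3}(3)$ recorded in Table~\ref{tab:length}.

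The main obstacle is the odd-characteristic case where $q$ is not large, since there the clean formula $l(G)=l(B)+r+\e$ from \cite[Theorem~A*]{ST2} need not apply, and one must instead argue via the maximal subgroup structure directly (as in the proof of Lemma~\ref{l:easy} for ${\rm L}_{2}(q)$). The way around this is that the lower bound $l(G)\geqs \O(|B|)+r$ still holds unconditionally, so it already rules out all but a small finite list of pairs $(G,q)$; for each such pair the exact length can then be determined by inspecting the maximal subgroups via \cite{BHR} (for low-rank classical and exceptional groups) and \cite{KL} (for the remaining classical groups), exactly as in the depth arguments of Section~\ref{proofs:depth}. Thus the lemma follows once the finitely many borderline small-$q$ cases have been checked, and I expect the bulk of the genuine work to lie in confirming that no sporadic small-rank group of Lie type slips through with length exactly $8$.
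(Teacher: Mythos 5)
Your overall strategy mirrors the paper's: alternating and sporadic groups are read off from \eqref{e:an} and \cite[Tables III and IV]{CST} (giving ${\rm M}_{12}$ and no alternating examples), and groups of Lie type are attacked via the Borel-subgroup bound $l(G)\geqs \O(|B|)+r$, which eliminates all high-rank families and leads to the entries ${}^2B_2(8)$ and ${\rm L}_{3}(3)$. However, there is a genuine gap in your treatment of the surviving low-rank families in odd characteristic. You claim that the unconditional bound $l(G)\geqs \O(|B|)+r$ ``already rules out all but a small finite list of pairs $(G,q)$'', after which each case can be settled by inspecting maximal subgroups or with {\sc Magma}. This is false for $G={\rm U}_{3}(q)$ with $q=p$ an odd prime: there the bound only yields the arithmetic condition $\O(q^2-1)\leqs 4+\O((3,q+1))$, which (since $\O(q^2-1)\geqs 5$ for odd $q\geqs 7$) amounts to $\O(q^2-1)=5$ and $q\equiv 2\imod{3}$. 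This is a primality-type condition on $p$, not a bound on $p$, and it defines a set of primes that is not a priori finite; no finite computation or table lookup can dispose of it.

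The paper closes exactly this hole with a number-theoretic argument: the condition forces $(q-1)/2$ and $(q+1)/6$ to both be prime, so $q=6r-1$ with $r$ and $3r-1$ prime, and parity forces $r=2$, i.e.\ $q=11$; one then checks $l({\rm U}_{3}(11))=9$, so no unitary group has length $8$. (A related point arises for ${\rm L}_{3}(q)$ with $q\geqs 5$ odd, where the paper does not use the bare Borel bound but the stronger parabolic bound \eqref{e:l32}, namely $l(G)\geqs l({\rm L}_{2}(q))+2f+2+\O(q-1)-\O((3,q-1))\geqs 9$; and for ${\rm PSp}_{4}(q)$ it uses the subgroups ${\rm SL}_{2}(q)$ and $2^4.A_5$ rather than $B$ alone.) So while your reduction of the high-rank and even-characteristic cases is sound and matches the paper, the step you defer to a ``finite check'' is precisely where the genuine work lies, and as formulated your argument cannot complete it.
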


\begin{proof}
Let $G \not\cong {\rm L}_{2}(q)$ be a simple group with $l(G)=8$. If $G$ is a sporadic group, then by inspecting \cite[Tables III and IV]{CST} we see that $G = {\rm M}_{12}$ is the only example. From the formula in \eqref{e:an}, it is easy to check that no alternating group has length $8$.

Now assume $G$ is a simple group of Lie type over $\mathbb{F}_q$, where $q=p^f$ with $p$ a prime. First we handle the exceptional groups. If $G = {}^2B_2(q)$ then $p=2$, $f \geqs 3$ is odd and
\[
l(G) = \O(q-1)+2f+1
\]
by \cite[Theorem 1]{ST}, hence $l(G)=8$ if and only if $f=3$. Next assume $G = {}^2G_2(q)$, so $p=3$ and $f \geqs 3$ is odd. Since a Borel subgroup of $G$ has order $q^3(q-1)$, it follows that
\[
l(G) \geqs \O(q-1)+3f+1>8.
\]
If $G = G_2(q)$ then $q \geqs 3$ and $l(G) \geqs l({\rm GL}_{2}(q))+5f+1>8$, so no examples arise. All of the other exceptional groups can be eliminated in a similar fashion.

Finally, let us assume $G$ is a classical group. If $G = {\rm P\O}_{n}^{\e}(q)$ is an orthogonal group with $n \geqs 7$, then it is clear that $l(G)>8$ (indeed, a Sylow $p$-subgroup of $G$ has length greater than $8$). Similarly, we can eliminate symplectic groups ${\rm PSp}_{n}(q)$ with $n \geqs 6$. Now assume $G = {\rm PSp}_{4}(q)$ with $q=p^f \geqs 3$. Here $l(G) \geqs l({\rm SL}_{2}(q))+3f+1$, so we may assume $f=1$, in which case $G$ has a maximal subgroup $2^4.A_5$ or $2^4.S_5$ (according to the value of $q$ modulo $8$) and thus
$l(G) \geqs 1+4+l(A_5) = 9$. Similarly, it is easy to show that $l({\rm L}_{n}^{\e}(q)) > 8$ if $n \geqs 4$.

To complete the proof, we may assume that $G = {\rm L}_{3}^{\e}(q)$. Let $B$ be a Borel subgroup of $G$ and first assume $G = {\rm U}_{3}(q)$, so $q \geqs 3$. If $q$ is even, then \cite[Theorem 1]{ST} gives
\begin{equation}\label{e:u3}
l(G) = \O(|B|) + 1 = \O(q^2-1)+3f+1 - \O((3,q+1))
\end{equation}
and thus $l(G) \geqs 9$.
For $q$ odd we have
\begin{equation}\label{e:u32}
l(G) \geqs \O(q^2-1)+3f+1 - \O((3,q+1))
\end{equation}
and we quickly deduce that $f=1$, so $q \geqs 7$ (since ${\rm U}_{3}(3)$ and ${\rm U}_{3}(5)$ have length $7$). Now $\O(q^2-1) \geqs 5$, so we must have $\O(q^2-1)=5$ and $q \equiv 2 \imod{3}$, hence $(q-1)/2$ and $(q+1)/6$ are both prime. This implies that $q=6r-1$, where $r$ and $3r-1$ are primes, so $r=2$ is the only option and one checks that $l({\rm U}_{3}(11)) = 9$.

Finally, let us assume $G = {\rm L}_3(q)$. If $q$ is even then
\begin{equation}\label{e:l3}
l(G) = \O(|B|)+2 = 2\O(q-1)+3f+2-\O((3,q-1))
\end{equation}
and it is easy to see that $l(G) \ne 8$. Now assume $q$ is odd. If $q=3$ then one can check that $l(G)=8$, so let us assume $q \geqs 5$. Let $H$ be a maximal parabolic subgroup of $G$. Then $l(G) \geqs l(H)+1$, so
\begin{equation}\label{e:l32}
l(G) \geqs l({\rm L}_{2}(q)) +2f+2 +\O(q-1)- \O((3,q-1))
\end{equation}
and we deduce that $l(G) \geqs 9$.
\end{proof}

\begin{lem}\label{l:l9}
Theorem \ref{t:length} holds if $l(G)=9$.
\end{lem}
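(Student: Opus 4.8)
The plan is to mirror the structure of Lemma \ref{l:l8}, classifying the simple groups $G \not\cong {\rm L}_{2}(q)$ with $l(G)=9$ by running through the families provided by the Classification. First I would dispose of the sporadic groups by inspecting \cite[Tables III and IV]{CST}, which should leave no examples (note that ${\rm M}_{12}$ and the groups of length $\le 7$ are already accounted for), and then use the formula \eqref{e:an} to check which alternating groups have length $9$: solving $\lfloor (3n-1)/2\rfloor - b_n - 1 = 9$ picks out $A_8$ (and no others), matching Table \ref{tab:length}. The bulk of the argument is the groups of Lie type over $\F$, where I would again split into exceptional and classical cases and use Borel-subgroup lower bounds of the shape $l(G) \geqs \O(|B|)+r$ together with \cite[Theorem 1]{ST} (for $p=2$) and \cite[Theorem A*]{ST2} to rule out almost everything.

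For the exceptional groups the Borel bound grows too fast: for each family one has $l(G) \geqs \O(|B|)+r$ with $|B|$ of order $q^{|\Phi^+|}(q-1)^r$, so $l(G) > 9$ except possibly for the smallest rank cases. I would check $^2B_2(q)$ via $l(G)=\O(q-1)+2f+1$ (which gives $9$ only for small $f$, but then $\O(q-1)$ is wrong, so no example), and eliminate $^2G_2(q)$, $G_2(q)$, $^3D_4(q)$ and the larger groups by the same rank-plus-Borel estimate as in Lemma \ref{l:l8}. For the classical groups I would first knock out the high-rank cases: orthogonal groups with $n \geqs 7$, symplectic ${\rm PSp}_n(q)$ with $n \geqs 6$, and ${\rm L}_n^{\e}(q)$ or ${\rm U}_n(q)$ with $n$ large all have a Sylow $p$-subgroup or a parabolic forcing $l(G)>9$. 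That reduces me to the small-rank list ${\rm PSp}_4(q)$, ${\rm L}_4^{\e}(q)$, ${\rm U}_4(2)$, ${\rm L}_3(q)$ and ${\rm U}_3(q)$, for which I expect to pull out exactly ${\rm U}_4(2)$, ${\rm L}_3(4)$ and the family ${\rm U}_3(q)$ recorded in Table \ref{tab:length}.

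The genuinely delicate step, and the one I expect to be the main obstacle, is the unitary family ${\rm U}_3(q)$. Here the Borel estimate \eqref{e:u3}--\eqref{e:u32} gives $l(G) = \O(q^2-1)+3f+1-\O((3,q+1))$ when $q$ is even and the corresponding lower bound when $q$ is odd, so I must force $f=1$ and then read off the precise number-theoretic conditions $\O(q \pm 1)=3$, $\O(q^2-q+1) \leqs 8$, $q \equiv 2 \imod 3$ and $q \equiv \pm 3, \pm 13 \imod{40}$ that cut out length exactly $9$; the sporadic small cases $q \in \{4,11,13,29\}$ must be verified directly, as in the $l({\rm U}_{3}(11))=9$ computation in Lemma \ref{l:l8}. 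The congruence $q \equiv \pm 3, \pm 13 \imod{40}$ enters precisely to control the maximality/length contribution of the relevant maximal subgroup (the $A_5$-type subfield or extraspecial-normaliser case), exactly as in the ${\rm L}_2(q)$ analysis, so I would track the maximal subgroups of ${\rm U}_3(q)$ from \cite{BHR} to confirm that no subgroup chain beats the Borel bound and that these congruences are both necessary and sufficient. For ${\rm L}_3(q)$ I would use the parabolic bound \eqref{e:l32} to get $l(G) \geqs 9$ with equality pinned down to $q=4$ (giving ${\rm L}_3(4)$) together with a direct check that $l({\rm L}_3(3))=8$ already appears under length $8$; and ${\rm PSp}_4(q)$ is eliminated by the bound $l(G) \geqs 1+4+l(A_5)=9$ giving at best equality only for $q$ whose detailed structure I would check forces $l(G)>9$ or reproduces ${\rm U}_4(2) \cong {\rm PSp}_4(3)$.
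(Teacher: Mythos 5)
Your proposal follows essentially the same route as the paper's proof: the same case division (sporadic groups via \cite[Tables III and IV]{CST}, alternating groups via \eqref{e:an}, exceptional and high-rank classical groups via Borel/Sylow bounds, reduction to ${\rm PSp}_4(q)'$ and ${\rm L}_3^{\e}(q)$ using ${\rm L}_4(2)\cong A_8$ and ${\rm U}_4(2)\cong{\rm PSp}_4(3)$), the same identification of ${\rm U}_3(q)$ with $q=p$ odd as the delicate case, and the same mechanism whereby the congruence conditions arise from tracking the maximal subgroups in \cite{BHR} (in the paper, the condition $q \equiv \pm 3, \pm 13 \imod{40}$ enters through forcing $l({\rm L}_2(q)) \leqs 5$ for the ${\rm L}_2(q)$ inside the reducible maximal subgroup, with $q=29$ the exceptional equality case). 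The one minor divergence is ${\rm PSp}_4(p)$ for odd $p$: the paper uses the subgroup ${\rm Sp}_2(q) \wr S_2$ to get $l(G) \geqs 3+2\,l({\rm L}_2(p)) \geqs 11$ for $p>3$ directly, whereas your $2^4.A_5$ bound only yields $l(G) \geqs 9$ and defers the elimination of $p>3$ to an unspecified further check, but this is a sharpening of the same subgroup-chain technique rather than a different approach.
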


\begin{proof}
This is very similar to the proof of the previous lemma. Let $G \not\cong {\rm L}_{2}(q)$ be a simple group with $l(G)=9$. By inspection, $G$ is not a sporadic group. In view of \eqref{e:an}, $G = A_8$ is the only alternating group of length $9$. Now assume $G$ is a group of Lie type over $\mathbb{F}_q$, where $q=p^f$ with $p$ a prime. The exceptional groups are easily eliminated by arguing as in the proof of Lemma \ref{l:l8}. Similarly, if $G$ is a classical group then it is straightforward to reduce to the cases $G = {\rm PSp}_{4}(q)'$ and ${\rm L}_{3}^{\e}(q)$ (note  that ${\rm L}_{4}(2) \cong A_8$ and ${\rm U}_{4}(2) \cong {\rm PSp}_{4}(3)$).

Suppose $G = {\rm PSp}_{4}(q)'$. If $q=2$ then $G \cong A_6$ and $l(G) = 5$. Now assume $q \geqs 3$. As noted in the proof of the previous lemma, $l(G) \geqs l({\rm SL}_{2}(q))+3f+1$ and so we may assume $q=p$ is odd. Now $G$ has a maximal subgroup $H$ of type ${\rm Sp}_{2}(q) \wr S_2$, which implies that
\[
l(G) \geqs l(H)+1 = 3+2\,l({\rm L}_{2}(p)).
\]
If $p=3$ then this lower bound is equal to $9$ and one checks that $l({\rm PSp}_{4}(3)) = 9$. For $p>3$ we get $l(G) \geqs 11$.

Next assume $G = {\rm L}_{3}(q)$. If $q$ is even, then \eqref{e:l3} holds and one checks that $l(G)=9$ if and only if $q=4$. Now assume $q$ is odd. We have already noted that $l({\rm L}_{3}(3))=8$, so we may assume $q \geqs 5$. Moreover, in view of \eqref{e:l32}, we may assume that $q=p$. Since $l({\rm L}_{2}(p)) \geqs 4$ and $\O(p-1) \geqs 2$, it follows that $l({\rm L}_{2}(p)) = 4$, $\O(p-1) =2$ and $p \equiv 1 \imod{3}$. Clearly, $p=7$ is the only prime satisfying the latter two conditions, but $l({\rm L}_{2}(7))=5$.

To complete the proof of the lemma, we may assume $G = {\rm U}_{3}(q)$. If $q$ is even then \eqref{e:u3} holds and we deduce that $l(G)=9$ if and only if $q=4$. Now suppose $q$ is odd, so \eqref{e:u32} holds. If $f \geqs 2$ then $\O(q^2-1) \geqs 5$ and thus $l(G) \geqs 11$. Therefore, we may assume $q=p$ is odd. We have already noted that $l({\rm U}_{3}(3)) = l({\rm U}_{3}(5)) = 7$ and $l({\rm U}_{3}(11)) = 9$, and it is straightforward to check that $l({\rm U}_{3}(7))=10$. Now assume $q \geqs 13$. If $\O(q^2-1) \geqs 7$ then \eqref{e:u32} implies that $l(G) \geqs 10$, so we must have $\O(q^2-1) = 5$ or $6$.

If $\O(q^2-1)=5$ then $q=13$ is the only possibility (see the proof of Lemma \ref{l:psl2}) and one checks that $l({\rm U}_{3}(13))=9$. Now assume $\O(q^2-1)=6$, so $q \equiv 2 \imod{3}$ by \eqref{e:u32}. By considering the maximal subgroups of $G$ (see \cite[Tables 8.5 and 8.6]{BHR}), we see that
\[
l(G) = \max\{9,\O(q+1)+l({\rm L}_{2}(q))+1, \O(q^2-q+1)+1\}.
\]
Note that $\O(q+1) \geqs 3$ and $l({\rm L}_{2}(q)) \geqs 4$ since $q \geqs 13$ and $q \equiv 2 \imod{3}$. If $\O(q+1) \geqs 4$ then $l({\rm L}_{2}(q)) \geqs 5$ and thus $l(G) \geqs 10$. Therefore, $\O(q+1) = \O(q-1)= 3$ and $l({\rm L}_{2}(q)) \leqs 5$, so either $q=29$ or $q \equiv \pm 3, \pm 13 \imod{40}$. In addition, we need $\O(q^2-q+1) \leqs 8$ and one checks there are primes $p$ that satisfy these conditions:
\[
p \in \{173, 317, 653, 2693, 3413, 3677, \ldots\}. \qedhere
\]
\end{proof}

\vs

This completes the proof of Theorem \ref{t:length}.

\subsection{Proof of Corollary \ref{c:small}}\label{ss:csmall}

Here we prove Corollary \ref{c:small}, which describes the finite insoluble groups of length $4,5$ and $6$.

First observe that part (i) is clear from the additivity of length (see Lemma \ref{add}(i)) and the fact that every non-abelian simple group has length at least $4$.

Next, assume $G$ is a finite insoluble group with $l(G)=5$. The simple groups of length $5$ are given in Theorem \ref{t:length}, so we may assume that $G$ is not simple. Therefore, $G$ must have exactly two composition factors: a non-abelian simple group $T$ of length $4$ and depth $3$, and a cyclic group $C_p$ of prime order. In particular, $\l(G)=4$ and so $G$ is one of the groups in Theorem \ref{depth4}. By inspecting the various possibilities, we see that either $G = T \times C_p$, or $G$ is quasisimple with $G/Z(G)=T$ and $Z(G)=C_p$, or $G$ is almost simple with socle $T$ and $G/T = C_p$. Since $l(T) = 4$, \cite[Theorem 1]{Jan} implies that $T = {\rm L}_{2}(q)$ and $q$ is a prime satisfying the conditions in the first row of Table \ref{tab:length}. In particular, the only valid quasisimple and almost simple groups are of the form ${\rm SL}_{2}(q)$ and ${\rm PGL}_{2}(q)$, respectively. This completes the proof of part (ii) of Corollary \ref{c:small}.

Finally, suppose $G$ is insoluble of length 6. Again, the simple groups of length $6$ are given by Theorem \ref{t:length}, so we may assume $G$ is not simple. Then $G$ has a unique non-abelian composition factor $T$ of length 4 or 5,  and $6-l(T)$  abelian composition factors. It is readily checked that the possibilities for $G$ when $l(T) = 5$ (resp. 4) are those in (iii)(b) (resp. (c)) of Corollary \ref{c:small}.

\subsection{Proof of Theorem \ref{t:inf}}\label{s:hbb}

Set $G = {\rm L}_{2}(p)$, where $p \geqs 5$ is a prime. By Lemma \ref{l:easy}(ii),
\[
l(G) \leqs  1+ \max\{4,\O(p\pm 1)\}
\]
and Theorem \ref{t:hr} (see Appendix \ref{appendix}) implies that there are infinitely many primes $p$ such that $\max\{\O(p \pm 1)\}$ is at most $8$. The result follows.

\section{Chain differences and ratios}\label{cdr}

In this section we prove our main results concerning chain differences and chain ratios, namely Theorems \ref{t:maincd}, \ref{t:maincr} and \ref{t:cd}.

\subsection{Proof of Theorem \ref{t:maincd}}\label{s:cd}

Here we prove Theorem \ref{t:maincd}, which provides a classification of the simple groups with chain difference two. For comparison, we start by recalling \cite[Theorem 3.3]{BWZ}, which describes the simple groups of chain difference one.

\begin{thm}[Brewster et al. \cite{BWZ}]\label{t:bwz}
Let $G$ be a finite simple group. Then ${\rm cd}(G)=1$ if and only if $G = {\rm L}_{2}(q)$ and either $q \in \{4,5,9\}$, or $q$ is a prime and one of the following holds:
\begin{itemize}\addtolength{\itemsep}{0.2\baselineskip}
\item[{\rm (i)}] $3 \leqs \O(q\pm 1) \leqs 4$ and either $q \equiv \pm 1 \imod{10}$ or $q \equiv \pm 1 \imod{8}$.
\item[{\rm (ii)}] $\O(q\pm 1) \leqs 3$ and $q \equiv \pm 3, \pm 13 \imod{40}$.
\end{itemize}
\end{thm}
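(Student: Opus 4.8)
The plan is to reduce immediately to the family ${\rm L}_2(q)$ and then to read off the precise conditions by combining the length data with the depth classification in Table~\ref{tab:main}. Since $\cd(G)\geqs 1$ for every non-abelian simple $G$, the hypothesis $\cd(G)=1$ together with Corollary~\ref{c:cr} gives $l(G)\leqs 5\,\cd(G)=5$. By Theorem~\ref{t:length} (lines~1 and~2 of Table~\ref{tab:length}), every non-abelian simple group of length at most $5$ is of the form ${\rm L}_2(q)$, using $A_5\cong{\rm L}_2(4)\cong{\rm L}_2(5)$; this establishes that $G\cong{\rm L}_2(q)$ for some prime power $q$. It remains to decide, for each such $q$, whether $\cd(G)=1$. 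As $\l(G)\geqs 3$ and $\cd(G)=l(G)-\l(G)$, the condition $\cd(G)=1$ holds precisely when either $l(G)=4$ and $\l(G)=3$, or $l(G)=5$ and $\l(G)=4$.

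Next I would compute the two invariants separately. The length of ${\rm L}_2(q)$ is given by Lemma~\ref{l:easy}: for $q$ odd it equals $\max\{\O(q-1)+f,\O(q+1)+1\}$ outside the short exceptional list $q\in\{5,7,11,19,29\}$, with an analogous even-characteristic formula. The depth is governed entirely by Theorem~\ref{depth3}: $\l(G)=3$ exactly when $G$ appears in Table~\ref{tab:main}, and otherwise $\l(G)\geqs 4$; since here $l(G)\leqs 5$ forces $\l(G)\leqs l(G)-1\leqs 4$, a group not in Table~\ref{tab:main} has depth exactly $4$. Thus $\cd(G)=1$ if and only if either [$l(G)=4$, which automatically gives $\l(G)=3$], or [$l(G)=5$ and $G\notin$ Table~\ref{tab:main}, which gives $\l(G)=4$].

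The substance of the argument, and the step I expect to be the main obstacle, is the number-theoretic reconciliation of these length/depth conditions with the congruences (i) and (ii) in the statement. The crucial point is to track how the small maximal subgroups $A_5$ and $S_4$ of ${\rm L}_2(q)$ affect \emph{both} invariants at once. Concretely, I would prove the Chinese Remainder equivalence that, for a prime $q\neq 5$, one has $q\not\equiv\pm3,\pm13\imod{40}$ if and only if $q\equiv\pm1\imod{10}$ or $q\equiv\pm1\imod{8}$, i.e.\ precisely when $A_5$ or $S_4$ embeds in ${\rm L}_2(q)$. I would then show that, when $\max\{\O(q\pm1)\}=3$, the presence of such a subgroup (of length $4$) forces an unrefinable chain of length $5$, whereas in its absence the length stays at $4$; this is exactly what distinguishes condition (i) from condition (ii). For the depth side, the two routes to membership in Table~\ref{tab:main} translate as $(q\pm1)/(2,q-1)$ prime $\Leftrightarrow \min\{\O(q\pm1)\}=2$ and the congruence route $\Leftrightarrow q\equiv\pm3,\pm13\imod{40}$; requiring $\O(q\pm1)\geqs 3$ and $q\not\equiv\pm3,\pm13\imod{40}$ simultaneously rules out both routes and forces depth $4$.

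Assembling these equivalences, the case $l(G)=4$ (so $\max\{\O(q\pm1)\}=3$ with no $A_5$ or $S_4$, hence $q\equiv\pm3,\pm13\imod{40}$ and $\O(q\pm1)\leqs 3$) yields condition (ii), while the case $l(G)=5$ with depth $4$ (so $3\leqs\O(q\pm1)\leqs 4$ together with $q\not\equiv\pm3,\pm13\imod{40}$, rewritten via the equivalence above) yields condition (i). Finally I would dispose of the finitely many small or exceptional $q$ — namely $q\in\{4,5,9\}$ and the sporadic length exceptions $q\in\{7,8,11,19,27,29\}$ — by direct computation of $l$ and $\l$ using Lemma~\ref{l:easy} and Table~\ref{tab:main}; these account for the listed values $q\in\{4,5,9\}$ and confirm that $7,8,11,27$ (which lie in Table~\ref{tab:main} with $l=5$, hence $\cd=2$) are correctly excluded, while $9,19,29$ (depth $4$, length $5$) fall under the stated conditions.
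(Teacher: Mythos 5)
The first thing to note is that the paper does not prove this statement at all: Theorem \ref{t:bwz} is quoted from Brewster, Ward and Zimmermann \cite[Theorem 3.3]{BWZ} purely ``for comparison'' with Theorem \ref{t:maincd}, so there is no internal proof to compare against, and your proposal must be judged on its own terms within the paper's logical architecture. Judged that way, it contains a genuine circularity. Your opening step deduces $l(G)\leqs 5\,\cd(G)=5$ from Corollary \ref{c:cr}, but Corollary \ref{c:cr} is an immediate consequence of Theorem \ref{t:maincr}, and the paper's proof of Theorem \ref{t:maincr} for the groups ${\rm L}_2(q)$ (Lemma \ref{l:c2}) invokes Theorem \ref{t:bwz} itself at three separate points: in the case $f=1$ (``The result now follows by combining Theorems \ref{t:length} and \ref{t:bwz}''), and again in the cases $f=3$ and $f=2$, where the input $\cd(G)\geqs 2$ is extracted from Theorem \ref{t:bwz}. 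So you cannot use Corollary \ref{c:cr} to prove Theorem \ref{t:bwz}; in this paper the dependence runs in the opposite direction. The gap is repairable without changing your overall strategy: the reduction to ${\rm L}_2(q)$ should instead be done as in the paper's proof of Theorem \ref{t:maincd}, whose Lemmas \ref{l:spal} and \ref{l:cdr2} are independent of Theorem \ref{t:bwz} and whose section and Borel-subgroup estimates in fact show $\cd(G)\geqs 2$ for every simple group not isomorphic to some ${\rm L}_2(q)$ (alternating groups via $\cd(A_7)=2$, $\cd(A_8)\geqs 4$ and Lemma \ref{l:bwz}; sporadic groups by comparing the length and depth tables; groups of Lie type via parabolic sections). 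Alternatively one could simply cite \cite{BWZ}, as the paper does.

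Once the reduction is repaired, the rest of your argument is essentially correct, and all remaining ingredients (Theorem \ref{t:length}, Theorem \ref{depth3} with Table \ref{tab:main}, Lemma \ref{l:easy} and \cite[Corollary 3.4]{BLS1}) are genuinely independent of Theorem \ref{t:bwz}. Your dichotomy ``$\cd(G)=1$ iff $l(G)=4$, or $l(G)=5$ and $G\notin$ Table \ref{tab:main}'' is sound, your Chinese Remainder equivalence is correct (the residues mod $40$ coprime to $40$ and satisfying neither $q\equiv\pm1\imod{10}$ nor $q\equiv\pm1\imod{8}$ are exactly $\pm3,\pm13$), and your translation of Table \ref{tab:main} membership into $\min\{\O(q\pm 1)\}=2$ or the congruence condition is the right dictionary. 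One point you should make explicit rather than assert: the reconciliation between the formula $l({\rm L}_2(q))=\max\{\O(q\pm1)\}+1$ of Lemma \ref{l:easy}(ii) and line 1 of Table \ref{tab:length} requires the elementary fact that a prime $q>5$ with $\max\{\O(q\pm1)\}=3$ and $q\equiv\pm1\imod{10}$ or $q\equiv\pm1\imod{8}$ must lie in the exceptional list $\{7,11,19,29\}$ (for instance, $8\mid q\mp1$ together with $\O(q\mp1)\leqs 3$ forces $q\mp1=8$); this is exactly what makes your ``presence of $A_5$ or $S_4$ bumps the length to $5$'' step consistent with the stated length formula, and it also confirms that among those four primes only $19$ and $29$ (with $\min\{\O(q\pm1)\}=3$) survive into condition (i).
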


We begin the proof of Theorem \ref{t:maincd} by handling the alternating and sporadic groups in Lemma \ref{l:spal}. The simple groups of Lie type will be dealt with in Lemmas \ref{l:cdr2} and \ref{l:cdl2}, with the latter result covering the groups of the form ${\rm L}_{2}(q)$, which is the most difficult case. 

\begin{lem}\label{l:spal}
Let $G$ be a simple alternating or sporadic group. Then $\cd(G)=2$ if and only if $G = A_7$ or ${\rm J}_{1}$.
\end{lem}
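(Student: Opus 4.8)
The plan is to verify the claim by direct computation, since both the alternating and sporadic families admit explicit length and depth data from the literature cited in the excerpt. Recall that $\cd(G) = l(G) - \l(G)$, so I need to determine both invariants for each relevant group. For the sporadic groups, the length $l(G)$ is recorded in \cite[Tables III and IV]{CST}, and the depth $\l(G)$ is recorded in \cite[Table 2]{BLS1} (equivalently \cite[Lemma 3.3]{BLS1}). Thus the sporadic case reduces to a finite inspection: for each of the $26$ sporadic simple groups, read off $l(G)$ and $\l(G)$ from these two tables, compute the difference, and record exactly those with $\cd(G)=2$. The expectation is that ${\rm J}_1$ is the unique sporadic example, with $l({\rm J}_1)=6$ (consistent with Theorem \ref{t:har} and the remark following it) and $\l({\rm J}_1)=4$.

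For the alternating groups, I have a closed formula for the length from \eqref{e:an}, namely
\[
l(A_n) = \left\lfloor \frac{3n-1}{2}\right\rfloor - b_n - 1,
\]
where $b_n$ is the number of ones in the binary expansion of $n$. The depth is bounded: by the discussion in the introduction, $\l(A_n) \leqs 23$ for all $n$, and since $l(A_n) \to \infty$, the condition $\cd(A_n)=2$ can only hold for small $n$. So the first step is to observe that for $n$ large enough, $l(A_n)$ exceeds $\l(A_n)+2$, bounding the search to a finite range of $n$. Within that range I would tabulate $l(A_n)$ from the formula and $\l(A_n)$ (using the depth values for small alternating groups, which are computable directly or available via {\sc Magma} and the depth bounds in \cite{BLS1}), then extract those $n$ with difference exactly $2$. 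The expected outcome is that $A_7$ is the only alternating example: here $l(A_7)=6$ (matching Theorem \ref{t:har}) and $\l(A_7)=4$, since $A_7$ has a soluble maximal subgroup $7{:}3$ of chief length $2$ giving an unrefinable chain $A_7 > 7{:}3 > 7 > 1$ of length $3$—wait, this gives depth $3$, so instead $\l(A_7)=4$ follows from Table \ref{tab:as2}, where $A_7$ appears with the soluble maximal subgroup $7{:}3$ of depth $3$.

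The main obstacle, such as it is, is not conceptual but bookkeeping: I must correctly cross-reference the length data from \cite{CST} against the depth data from \cite{BLS1}, making sure that the ``Probable Values'' in \cite[Table IV]{CST} are used only where confirmed (the excerpt already notes these are now verified), and that the small alternating depths are handled correctly rather than estimated from the uniform bound $23$. The only genuinely delicate point is pinning down the exact depth of the borderline small groups—$A_7$ itself and its near neighbours—to confirm that no other small $n$ sneaks in with $\cd(A_n)=2$, and likewise confirming that near-miss sporadic groups (those with $\cd$ equal to $1$ or $3$) are correctly excluded. Since both families are finite after the large-$n$ truncation, the argument terminates in a finite check, and I would present it as such, citing the relevant tables and concluding that $A_7$ and ${\rm J}_1$ are precisely the alternating and sporadic simple groups with chain difference two.
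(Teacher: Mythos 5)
Your overall strategy is sound and, for the sporadic groups, identical to the paper's: both arguments reduce to reading $l(G)$ from \cite[Tables III and IV]{CST} and $\l(G)$ from \cite[Table 2]{BLS1} and taking the difference. For the alternating groups you take a genuinely different route. The paper computes $\cd(A_n)$ only for $n \leqs 8$ (getting $\cd(A_5)=\cd(A_6)=1$, $\cd(A_7)=2$, $\cd(A_8)=4$) and then invokes Lemma \ref{l:bwz} --- monotonicity of the chain difference under taking sections --- to conclude $\cd(A_n) \geqs \cd(A_8)=4$ for all $n \geqs 8$, so no further computation is needed. Your truncation instead combines the uniform depth bound $\l(A_n) \leqs 23$ with the growth of $l(A_n)$ from \eqref{e:an}; this is valid, but it only rules out $n$ with $l(A_n) > 25$, i.e.\ roughly $n \geqs 21$, so you are left having to determine the exact depth of $A_n$ for all $n$ up to about $20$, a substantially larger computation. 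The section-monotonicity trick is the one idea your write-up is missing, and it is what makes the paper's alternating-group argument a four-line check.

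There is also a concrete error in your justification of $\l(A_7)=4$, which is the crux of the lemma (it is exactly what makes $\cd(A_7)=2$ rather than $3$). The subgroup $7{:}3$ is \emph{not} maximal in $A_7$: it is contained in a maximal subgroup ${\rm L}_{2}(7)$, so your proposed chain $A_7 > 7{:}3 > 7 > 1$ is not unrefinable and does not exhibit depth $3$. Your fallback citation is also wrong: $A_7$ does not appear in Table \ref{tab:as2}, since the entry for $A_p$ with $M = p{:}((p-1)/2)$ carries the condition $\O(p-1)=3$, and $\O(6)=2$. The correct justification is that $A_7$ is excluded from Table \ref{tab:main} (by \cite[Theorem 1]{BLS1}), so $\l(A_7) \geqs 4$, while the unrefinable chain $A_7 > {\rm L}_{2}(7) > 7{:}3 > 7 > 1$ shows $\l(A_7) \leqs 4$. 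Since your method ultimately defers to a correct computation of the small depths, this slip does not invalidate the strategy, but as written the key value in your proof is justified by a false group-theoretic claim and a reference that does not support it.
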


\begin{proof}
First assume $G = A_n$ is an alternating group. A formula for $l(G)$ is given in \eqref{e:an} and it is easy to compute $\l(A_n)$ directly for small values of $n$: we get ${\rm cd}(A_5) = {\rm cd}(A_6)=1$, ${\rm cd}(A_7)=2$ and ${\rm cd}(A_8)=4$. By Lemma \ref{l:bwz}, it follows that
${\rm cd}(A_n) \geqs 4$ for all $n \geqs 8$.

Recall that the length and depth of each sporadic group $G$ is given in \cite[Tables III and IV]{CST} and \cite[Table 2]{BLS1}, respectively, and we immediately deduce that ${\rm cd}(G) = 2$ if and only if $G = {\rm J}_{1}$.
\end{proof}

\begin{lem}\label{l:cdr2}
Let $G$ be a simple group of Lie type over $\mathbb{F}_{q}$ with $G \not\cong {\rm L}_{2}(q)$. Then $\cd(G)=2$ if and only if $G = {\rm U}_{3}(5)$.
\end{lem}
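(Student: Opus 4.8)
The plan is to combine the length classification of Theorem~\ref{t:length} with the depth-$3$ classification of \cite[Theorem 1]{BLS1} (Table~\ref{tab:main}), thereby reducing to a short, explicit list of small groups. Since $G$ is non-abelian simple we have $\l(G) \geqs 3$, so $\cd(G)=2$ forces $l(G)=\l(G)+2 \geqs 5$. The first and principal task is to show that $\cd(G)=2$ also forces $l(G)\leqs 9$, so that $G$ occurs in Table~\ref{tab:length}.

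For this reduction I would argue as in the proofs of Lemmas~\ref{l:l8} and \ref{l:l9}. Writing $q=p^f$, with $r$ the twisted Lie rank and $B$ a Borel subgroup, the chain through $B$ gives $l(G)\geqs \O(|B|)+r$, since $B$ is soluble. Hence $\cd(G)=l(G)-\l(G)\geqs \O(|B|)+r-\l(G)$. The depth of every simple group of Lie type is small (bounded via \cite{BLS1}, or concretely by exhibiting a short unrefinable chain through a maximal subgroup of small depth), whereas $\O(|B|)+r$ grows with both $r$ and $q$; consequently $\cd(G)$ exceeds $2$ for all but a bounded collection of groups. This eliminates the exceptional groups and the higher-rank classical groups, leaving only the Lie-type groups with $l(G)\leqs 9$, which are recorded in Table~\ref{tab:length}.

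It then remains to compute $\cd(G)$ for each Lie-type group $G\not\cong {\rm L}_{2}(q)$ appearing in Table~\ref{tab:length}. Those lying in Table~\ref{tab:main}, namely ${\rm L}_{3}(3)$, ${\rm U}_{3}(4)$ and ${}^2B_2(8)$, have $\l(G)=3$, so $\cd(G)=l(G)-3\geqs 5$; and $A_8={\rm L}_4(2)$ has $\cd(A_8)=4$ by Lemma~\ref{l:spal}. For the remaining groups I would read off the depth from the maximal subgroups: ${\rm U}_{3}(3)$ and ${\rm L}_{3}(4)$ each contain ${\rm L}_{2}(7)$ (of depth $3$) as a maximal subgroup and lie outside Table~\ref{tab:main}, so $\l=4$ and $\cd=l-4\geqs 3$; while ${\rm U}_{4}(2)$ and the length-$9$ unitary groups ${\rm U}_{3}(q)$ all have depth at most $5$, whence $\cd=9-\l\geqs 4$. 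Finally, for $G={\rm U}_{3}(5)$ one checks that $G\notin$ Table~\ref{tab:main}, so $\l(G)\geqs 4$; moreover no maximal subgroup of ${\rm U}_{3}(5)$ has depth $3$ (the parabolic $5^{1+2}{:}8$ has chief length $5$, and the remaining maximal subgroups, with socle $A_7$ or $A_6$, have depth $4$), so that $\l(G)\geqs 5$, and the chain ${\rm U}_{3}(5) > A_7 > \cdots$ together with $\l(A_7)=4$ gives $\l({\rm U}_{3}(5))=5$. Since $l({\rm U}_{3}(5))=7$, this yields $\cd({\rm U}_{3}(5))=2$, and ${\rm U}_{3}(5)$ is the unique group in the list with this property.

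I anticipate that the main obstacle is twofold. First, making the reduction to $l(G)\leqs 9$ fully rigorous requires a usable upper bound on the depth of each Lie-type family to set against the Borel lower bound on length, and the higher-rank classical and exceptional families need to be handled with some care. Second, the borderline comparison between ${\rm U}_{3}(3)$ (with $\cd=3$) and ${\rm U}_{3}(5)$ (with $\cd=2$), both of length $7$, hinges on an \emph{exact} depth computation; the crux is verifying that ${\rm U}_{3}(5)$ possesses no maximal subgroup of depth $3$, forcing $\l=5$, whereas ${\rm U}_{3}(3)$ does, giving $\l=4$.
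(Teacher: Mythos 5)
Your endgame -- checking the Lie-type groups that appear in Table \ref{tab:length} and pinning down the depth distinction $\l({\rm U}_3(5))=5$ versus $\l({\rm U}_3(3))=4$ -- is essentially sound and matches the paper's conclusions. The gap is in your reduction to that finite check. You assert that $\O(|B|)+r$ ``grows with both $r$ and $q$'', so that $\cd(G)>2$ outside a bounded collection, which you then identify with the groups of length at most $9$. This is false: $\O(|B|)$ grows with the rank $r$ and with $f$ (where $q=p^f$), but \emph{not} with $p$. For groups of small rank over prime fields -- ${\rm L}_3(p)$, ${\rm U}_3(p)$, ${\rm PSp}_4(p)$, and in fact everything up to rank roughly $6$ over $\mathbb{F}_p$ once the depth bound $\l(G)\leqs 3\O(f)+36$ is inserted -- the quantity $\O(|B|)+r-\l(G)$ stays bounded for infinitely many primes $p$ (indeed, by the appendix, $\O(p^2-1)$ is bounded along an infinite sequence of primes), so no growth argument eliminates them. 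Worse, these surviving groups do \emph{not} satisfy $l(G)\leqs 9$: for a typical prime $p$ the numbers $\O(p\pm 1)$ are large, so ${\rm L}_3(p)$ and ${\rm U}_3(p)$ have length well beyond $9$; such groups never enter your final case check, and your proposal contains no argument that their chain difference exceeds $2$. Even where the Borel-versus-depth comparison does apply it can be too weak: for ${\rm PSp}_4(p)$ it yields only $\cd(G)\geqs 2$, which does not rule anything out.

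The missing content is precisely where the paper's work lies. The paper first treats the low-rank families ${\rm L}_3(q)$, ${\rm U}_3(q)$, ${\rm PSp}_4(q)$, ${}^2F_4(q)'$, ${}^2G_2(q)$, ${}^2B_2(q)$ one at a time, playing length lower bounds (parabolic/Borel chains, and Theorem \ref{t:length} in the form ``every ${\rm U}_3(p)$ with $p>11$ has $l\geqs 9$'') against explicit depth upper bounds valid for \emph{all} primes $p$ (e.g. $\l({\rm U}_3(p))\leqs 6$ and $\l({\rm L}_3(p))\leqs 6$ via the maximal subgroup ${\rm SO}_3(p)\cong {\rm PGL}_2(p)$), and, where this is insufficient, exhibiting a section of chain difference at least $3$ (e.g. $2^4.\O_4^{-}(2)<{\rm PSp}_4(q)$ for $q$ odd, with $\cd=4$). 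It then kills every higher-rank classical group and every exceptional group by Lemma \ref{l:bwz}: each contains a section isomorphic to one of the low-rank groups already handled, and $\cd$ is monotone under taking sections. That monotonicity is the tool your reduction never invokes, and without it (or the family-by-family analysis it replaces) the claim ``$\cd(G)=2$ forces $l(G)\leqs 9$'' is not established -- it is, in effect, the lemma itself. A lesser point: your list of Table \ref{tab:main} members inside Table \ref{tab:length} omits ${\rm U}_3(11)$, ${\rm U}_3(13)$ and ${\rm U}_3(29)$ (the relevant quantities $37$, $157$, $271$ are prime, so these have depth $3$); this does not affect your conclusion for them, since depth $3$ only increases the chain difference, but it should be stated correctly.
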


\begin{proof}
Set $q=p^f$, where $p$ is a prime and $f \geqs 1$. We will follow a similar approach to the proof of \cite[Theorem 3.3]{BWZ} in the sense that we first handle the low-rank groups
\begin{equation}\label{e:lowr}
{\rm L}_{3}(q), \, {\rm U}_{3}(q), \, {\rm PSp}_{4}(q), \, {}^2F_4(q)',\, {}^2G_2(q), \, {}^2B_2(q)
\end{equation}
and we then appeal to Lemma \ref{l:bwz}. 

First assume $G = {\rm L}_{3}(q)$, in which case $q \geqs 3$ since ${\rm L}_{3}(2) \cong {\rm L}_{2}(7)$. If $q=3$ then $l(G)=8$ and $\l(G)=3$, so we may assume $q \geqs 4$ and thus
$l(G) \geqs 9$ by Theorem \ref{t:length}. If $p$ is odd then ${\rm L}_{3}(p)$ has a maximal subgroup ${\rm SO}_{3}(p) \cong {\rm PGL}_{2}(p)$, so $\l({\rm L}_{3}(p)) \leqs 6$ by \cite[Corollary 3.4]{BLS1} and thus ${\rm cd}({\rm L}_{3}(p)) \geqs 3$. In view of Lemma \ref{l:bwz}, this implies that ${\rm cd}(G) \geqs 3$ since ${\rm L}_{3}(p) \leqs G$. Now assume $p=2$ and let $H = QL$ be a maximal parabolic subgroup of $G$, where $Q$ is elementary abelian of order $q^2$ and $L \leqs {\rm GL}_{2}(q)$ has index $d=(3,q-1)$. Note that $L$ acts irreducibly on $Q$, so $L$ is a maximal subgroup of $H$. Therefore, $l(H) = \O((q-1)/d)+2f+l({\rm L}_{2}(q))$ and
\[
\l(H) \leqs \l(L)+1 \leqs \O((q-1)/d)+\l({\rm L}_{2}(q))+1,
\]
so ${\rm cd}(H) \geqs 2f-1+{\rm cd}({\rm L}_{2}(q)) \geqs 2f$ and the result follows since $f \geqs 2$.

Next assume $G = {\rm U}_{3}(q)$, so $q \geqs 3$. Let $H = QL$ be a Borel subgroup of $G$, where $d = (3,q+1)$. Here $Q = q^{1+2}$, $L = (q^2-1)/d$ and $Q/Z(Q)$ is elementary abelian of order $q^2$. Moreover, $L$ acts irreducibly on $Q/Z(Q)$. Therefore, $l(H) = 3f+\O(L)$ and
$\l(H) \leqs f+1+\O(L)$, so ${\rm cd}(H) \geqs 2f-1$ and we may assume $q=p$. If $p \in \{3,7,11\}$ then it is easy to check that ${\rm cd}(G) \geqs 3$, whereas ${\rm cd}(G) = 2$ if $p=5$. For $p>11$, Theorem \ref{t:length} implies that $l(G) \geqs 9$ and we get ${\rm cd}(G) \geqs 3$ since $G$ has a maximal subgroup ${\rm SO}_3(p) \cong {\rm PGL}_{2}(p)$ of depth at most $5$. 

To complete the analysis of the groups in \eqref{e:lowr}, we may assume 
\[
G \in \{ {\rm PSp}_{4}(q), {}^2F_4(q)', {}^2G_2(q), {}^2B_2(q)\}.
\] 
Suppose $G = {\rm PSp}_{4}(q)$ with $q \geqs 3$. If $q$ is even then $G$ has a maximal subgroup $H={\rm L}_{2}(q) \wr S_2$. Now $l(H) = 2\,l({\rm L}_{2}(q))+1$ and $\l(H) \leqs \l({\rm L}_{2}(q))+2$, so ${\rm cd}(H) \geqs l({\rm L}_{2}(q)) \geqs 4$. Similarly, if $q$ is odd then $H = 2^4.\O_{4}^{-}(2)<G$ and the result follows since ${\rm cd}(H)=4$. Next assume $G = {}^2F_4(q)'$. One checks that the
Tits group ${}^2F_4(2)'$ has depth $4$, so ${\rm cd}({}^2F_4(2)') \geqs 6$ by Theorem \ref{t:length} and thus ${\rm cd}(G) \geqs 6$ by Lemma \ref{l:bwz}. Now suppose $G = {}^2G_2(q)$, so $q=3^f$ and $f \geqs 3$ is odd. Let $H$ be a Borel subgroup of $G$ and let $K = 2 \times {\rm L}_{2}(q)$ be the centralizer in $G$ of an involution. Then
\begin{align*}
l(G) & \geqs l(H)+1 = \O(q-1)+3f+1 \\
\l(G) & \leqs \l(K)+1 \leqs \l({\rm L}_{2}(q))+2 \leqs \O(q-1)+3
\end{align*}
and thus ${\rm cd}(G) \geqs 3f-2 \geqs 7$. Finally, suppose $G = {}^2B_2(q)$, where $q=2^f$ and $f \geqs 3$ is odd. Here $l(G) = \O(q-1)+2f+1$ by \cite[Theorem 1]{ST} and $\l(G) \leqs \O(q-1)+2$ (since $G$ has a maximal subgroup $D_{2(q-1)}$). Therefore, ${\rm cd}(G) \geqs 2f-1 \geqs 5$.

We now complete the proof of the lemma by handling the remaining simple groups; the classical groups
\[
{\rm L}_{n}^{\e}(q) \, (n \geqs 4), \; {\rm PSp}_{n}(q) \, (n \geqs 6), \; {\rm P\O}_{n}^{\e}(q) \, (n \geqs 7)
\]
and the exceptional groups
\[
{}^3D_4(q), G_2(q), F_4(q), E_6^{\e}(q), E_7(q), E_8(q).
\]

Suppose $G = {\rm U}_{n}(q)$ with $n \geqs 4$. If $q$ is even then $G$ has a section isomorphic to ${\rm U}_{4}(2)$ and one checks that ${\rm cd}({\rm U}_{4}(2))=4$. Similarly, if $q$ is odd and $q \ne 5$ then $G$ has a section ${\rm U}_{3}(q)$ with ${\rm cd}({\rm U}_{3}(q)) \geqs 3$. Finally, suppose $q=5$. Since $\l({\rm U}_4(5)) = 5$ we get 
${\rm cd}({\rm U}_4(5)) \geqs 5$ by Theorem \ref{t:length}. The result follows since $G$ has a section isomorphic to ${\rm U}_{4}(5)$. 

In all of the remaining cases, it is easy to see that $G$ has a section isomorphic to ${\rm L}_{3}(q)$ and thus Lemma \ref{l:bwz} implies that ${\rm cd}(G) \geqs 3$ if $q \geqs 3$. Now assume $q=2$. If $G = G_2(2)' \cong {\rm U}_{3}(3)$ then ${\rm cd}(G)=3$. Since ${\rm U}_{3}(3)< {\rm Sp}_{6}(2) < \O_{8}^{-}(2)$, it follows that
${\rm cd}(G) \geqs 3$ if $G = {\rm Sp}_{6}(2)$ or $\O_{8}^{-}(2)$. In each of the remaining cases (with $q=2$), $G$ has a section isomorphic to ${\rm L}_{4}(2)$ and the result follows since ${\rm cd}({\rm L}_{4}(2))=4$.
\end{proof}

The next result completes the proof of Theorem \ref{t:maincd}.

\begin{lem}\label{l:cdl2}
If $G = {\rm L}_{2}(q)$ with $q \geqs 5$, then $\cd(G)=2$ if and only if
\begin{itemize}\addtolength{\itemsep}{0.2\baselineskip}
\item[{\rm (i)}] $q \in \{7,8,11,27,125\}$; or
\item[{\rm (ii)}] $q$ is a prime and one of the following holds:

\vspace{1mm}

\begin{itemize}\addtolength{\itemsep}{0.2\baselineskip}
\item[{\rm (a)}] $\max\{\O(q \pm 1)\} = 4$ and either $\min\{\O(q \pm 1)\} = 2$, or $q \equiv \pm 3, \pm 13 \imod{40}$.
\item[{\rm (b)}] $\max\{\O(q \pm 1)\} = 5$, $\min\{\O(q \pm 1)\} \geqs 3$ and $q \not\equiv \pm 3, \pm 13 \imod{40}$.
\end{itemize}
\end{itemize}
\end{lem}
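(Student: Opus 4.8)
\emph{Proof strategy.} The plan is to exploit the identity $\cd(G) = l(G) - \l(G)$ together with the fact, recorded in \cite[Theorem 1]{BLS1} and \cite[Lemma 3.1]{BLS1}, that $\l(G) \in \{3,4\}$ for every $G = {\rm L}_{2}(q)$, with $\l(G) = 3$ precisely when $G$ occurs in Table \ref{tab:main} and $\l(G) = 4$ otherwise. Consequently $\cd(G) = 2$ if and only if either $\l(G) = 3$ and $l(G) = 5$, or $\l(G) = 4$ and $l(G) = 6$. Since Lemma \ref{l:easy} gives an explicit formula for $l(G)$ in every case, the whole problem reduces to deciding, for each $q$, which of these two configurations (if either) occurs, and because we compute $\cd(G)$ exactly this settles both directions of the equivalence at once. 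I would organise the work according to the three regimes of Lemma \ref{l:easy}: $q$ even, $q = p^f$ with $p$ odd and $f \geq 2$, and $q = p$ an odd prime.

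First I would dispose of the small exceptional values $q \in \{5,7,11,19,29\}$ for which \eqref{e:le} fails, computing $l$ and $\l$ directly: this produces $q \in \{7,11\}$ with $\cd = 2$ (contributing to part (i)), while $q \in \{5,19,29\}$ all have $\cd = 1$ and are excluded. For $q$ even, Lemma \ref{l:easy}(i) gives $l(G) = \O(q-1) + f + 1$, and a short check shows that $l(G) = 5$ forces $q = 8$ (which lies in Table \ref{tab:main}, so $\cd = 2$) while $l(G) = 6$ is impossible; thus $q = 8$ is the only even contribution.

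The main body of the argument is the odd prime case. For $q = p$ with $q \geq 13$ and $q \notin \{19,29\}$ one has $l(G) = \max\{\O(q\pm 1)\}+1$, so $l(G) = 5$ (respectively $6$) means $\max\{\O(q \pm 1)\} = 4$ (respectively $5$). The crucial translation is that, for odd $q$, one has $\O(q\pm1) = 2$ if and only if $(q\pm1)/2$ is prime; hence the depth-$3$ criterion of Table \ref{tab:main} — that one of $(q\pm1)/2$ is prime, or that $q \equiv \pm 3, \pm 13 \imod{40}$ — becomes ``$\min\{\O(q\pm1)\} = 2$ or $q \equiv \pm 3, \pm 13 \imod{40}$''. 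Combining this with the two length conditions yields exactly the descriptions in (ii)(a) (depth $3$, length $5$) and (ii)(b) (depth $4$, length $6$), the latter using that the negation of the depth-$3$ criterion reads $\min\{\O(q\pm1)\} \geq 3$ and $q \not\equiv \pm 3, \pm 13 \imod{40}$.

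Finally, for $q = p^f$ with $p$ odd and $f \geq 2$ I would show that the only surviving values are $q = 27$ (length $5$, depth $3$) and $q = 125$ (length $6$, depth $4$); this is where the principal obstacle lies, since it is a genuinely number-theoretic sieving problem. The cases $f \geq 4$ are dispatched by the crude bound $l(G) \geq \O(q-1) + f \geq 7$, and the characteristic-$3$ powers $q \in \{9,27,81,243,\dots\}$ are checked by hand (noting $q=9$ is $A_6$ with $\cd = 1$ by Lemma \ref{l:spal}, and that $q = 3^k$ enters Table \ref{tab:main} exactly when $k\geq 3$ is prime, giving $27$). For $p \geq 5$ with $f = 2$, the divisibility $24 \mid p^2-1$ forces $\O(q-1) \geq 4$, so $l(G) \geq 6$ with equality only at $q = 25$, which has depth $3$ and hence $\cd = 3$; for $f = 3$ the factorisation $p^3-1 = (p-1)(p^2+p+1)$ together with the observation that $p^2+p+1$ being prime forces $p \equiv 2 \imod 3$ and hence $9 \mid p^3+1$, shows that the length-$6$ depth-$4$ requirement is met only by $q = 125$. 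Assembling the contributions from all three regimes produces precisely the list in (i) and the prime conditions in (ii), completing the proof.
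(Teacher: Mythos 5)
Your proof has a genuine gap, and it is located at the very first step. The claim that $\l(G)\in\{3,4\}$ for \emph{every} $G={\rm L}_2(q)$ is false: the results you invoke give $\l({\rm L}_2(q))\in\{3,4\}$ only when $q$ is \emph{prime} (this is \cite[Corollary 3.4]{BLS1}). For proper prime powers the depth can be larger. For instance, for $q=p^3$ with $p\geqs 5$ the paper's proof shows that $\l(G)=5$ whenever $\min\{\O(p\pm1)\}\geqs 3$ and $p\not\equiv\pm3,\pm13\imod{40}$ (e.g.\ $p=31$), and for $q=p^2$ the only a priori bound available is $\l(G)\leqs 6$, coming from the maximal subgroup ${\rm PGL}_2(p)$. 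Consequently your reduction ``$\cd(G)=2$ if and only if $(\l(G),l(G))=(3,5)$ or $(4,6)$'' is invalid, and the configurations it omits, namely $(\l(G),l(G))=(5,7)$ for $q=p^3$ and $(6,8)$ for $q=p^2$, are precisely the hard cases: most of the paper's proof consists of delicate congruence arguments (modulo $40$, modulo $10$, divisibility by $7$, and so on) showing that no prime $p$ realizes either configuration. Your $f=2$ and $f=3$ analyses never consider these possibilities, so the ``only if'' direction of the lemma remains unproved for prime powers. The same defect undermines your dismissal of $f\geqs 4$ and of even $q$: the inequality $l(G)\geqs 7$ only yields $\cd(G)\geqs 3$ if one already knows $\l(G)\leqs 4$, which is exactly what cannot be assumed.

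The repair is to \emph{bound} the depth rather than assume it, which is the paper's route: for $q\geqs 13$ odd the dihedral maximal subgroup $D_{q-1}$ gives $\l(G)\leqs\O(q-1)+1$ (for $q$ even, $D_{2(q-1)}$ gives $\l(G)\leqs\O(q-1)+2$), while $l(G)\geqs\O(q-1)+f$ by Lemma \ref{l:easy}; hence $\cd(G)\geqs f-1$, which forces $f\leqs 3$ and settles the even case. One must then carry out the two number-theoretic eliminations described above for $f=2$ and $f=3$, with $\l(G)$ pinned down via the maximal subgroups ${\rm PGL}_2(p)$ and ${\rm L}_2(p)$ respectively. Your treatment of the prime case $q=p$ is correct and coincides with the paper's argument, but that is the easy part of the lemma.
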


\begin{proof}
As before, write $q=p^f$. First assume $p=2$, so $q \geqs 4$. Here $l(G) = \O(q-1)+f+1$ by Lemma \ref{l:easy}(i) and $\l(G) \leqs \O(q-1)+2$ (since $D_{2(q-1)}$ is a maximal subgroup). Therefore, ${\rm cd}(G) \geqs f-1$ and thus $f \in \{2,3\}$. If $f=2$ then ${\rm cd}(G)=1$, while ${\rm cd}(G) = 2$ if $f=3$ (the case $q=8$ is recorded in part (ii)(a) of Theorem \ref{t:maincd}).

Now assume $p \geqs 3$. For $q \leqs 11$, one checks that ${\rm cd}(G) = 2$ if and only if $q=7$ or $11$, so we may assume $q \geqs 13$. This implies that $D_{q-1}$ is a maximal subgroup of $G$ and thus $\l(G) \leqs \O(q-1)+1$. Now $l(G) \geqs \O(q-1)+f$ by Lemma \ref{l:easy}(ii), so ${\rm cd}(G) \geqs f-1$ and thus we may assume $f \in \{1,2,3\}$.

First assume $q=p \geqs 13$. By \cite[Corollary 3.4]{BLS1} we have
\[
\l(G) = \left\{\begin{array}{ll}
3 & \mbox{$\min\{\O(p\pm 1)\}=2$ or $p \equiv \pm 3, \pm 13 \imod{40}$} \\
4 & \mbox{otherwise.}
\end{array}\right.
\]
If $\l(G)=3$ then we need $l(G)=5$, in which case Theorem \ref{t:length} implies that $\max\{\O(p \pm 1)\}=4$. There are primes $p$ that satisfy these conditions. For example, if
\[
p \in \{23,59, 83, 227, 347, 563, \ldots\},
\]
then $\O(p-1) = 2$ and $\O(p+1)=4$. Similarly, we have $\l(G)=4$ and $l(G) = 6$ if and only if $\O(p \pm 1) \geqs 3$, $p \not\equiv \pm 3, \pm 13 \imod{40}$ and $\max\{\O(p \pm 1)\}=5$. Once again, there are primes $p$ with these properties.

Next assume $q=p^2$ with $p \geqs 5$. Since ${\rm PGL}_{2}(p)$ is a maximal subgroup of $G$, it follows that $\l(G) \leqs 6$ and thus $l(G) \leqs 8$. Now, if $l(G) \leqs 7$ then Theorem \ref{t:length} implies that $p \in \{5,7,11,13\}$ and in each case one checks that ${\rm cd}(G) \geqs 3$. Therefore, we may assume $\l(G)=6$ and $l(G)=8$. By Theorem \ref{t:length}, $l(G)=8$ if and only if $\O(q-1)=6$ and $\O(q+1) \leqs 7$. Similarly, $\l(G)=6$ if and only if $\O(q \pm 1) \geqs 5$, $p \equiv \pm 1 \imod{10}$ and $\l({\rm L}_{2}(p)) = 4$. The latter constraint yields the additional condition
$\O(p \pm 1) \geqs 3$, so we need $\O(p-1) = \O(p+1)=3$ since $\O(q-1)=6$. We claim that there are no primes $p$ that satisfy these conditions. For example, suppose $p \equiv 1 \imod{10}$. Then $(p-1)/10$ must be a prime. If $p \equiv 1 \imod{3}$ then $p=31$ is the only possibility, but this gives $\O(p+1)=5$. On the other hand, if $p \equiv 2 \imod{3}$ then $(p+1)/6$ is a prime. But $p^2 \equiv 1 \imod{8}$ and thus $(p-1)/10=2$ or $(p+1)/6=2$, which implies that $p=11$ and $\O(p-1)=2$. A very similar argument handles the case $p \equiv -1 \imod{10}$.

Finally, suppose $q=p^3$. If $p=3$ then one checks that $\l(G)=3$ and $l(G)=5$, so ${\rm cd}(G)=2$ in this case. Now assume $p \geqs 5$. Since ${\rm L}_{2}(p)$ is a maximal subgroup of $G$, it follows that
\[
\l(G) = \left\{\begin{array}{ll}
4 & \mbox{$\min\{\O(p\pm 1)\}=2$ or $p \equiv \pm 3, \pm 13 \imod{40}$} \\
5 & \mbox{otherwise}
\end{array}\right.
\]
and thus $l(G) \leqs 7$. By applying Theorem \ref{t:length}, we see that $\l(G)=4$ and $l(G)=6$ if and only if $p=5$. Similarly, $\l(G)=5$ and $l(G)=7$ if and only if $\O(q-1)=4$, $\O(q+1) \leqs 6$, $\O(p \pm 1) \geqs 3$ and $p \not\equiv \pm 3, \pm 13 \imod{40}$. Note that the conditions $\O(q-1)=4$ and $\O(p \pm 1) \geqs 3$ imply that $\O(p-1)=3$ and $p^2+p+1$ is a prime, so $p \equiv 2 \imod{3}$ and $p^2-p+1$ is divisible by $3$, whence $\O(p^2-p+1) \geqs 2$. There are primes $p$ such that $\O(p^3-1)=4$, $\O(p^3+1) \leqs 6$ and $\O(p \pm 1) \geqs 3$: the smallest one is $433373$. However, we claim that there is no prime $p$ that also satisfies the condition $p \not\equiv \pm 3, \pm 13 \imod{40}$.

If $p \equiv 1, 9, 17, 33 \imod{40}$ then $p-1$ is divisible by $8$ and thus $\O(p-1) \geqs 4$. Similarly, if $p \equiv 7,23,31,39 \imod{40}$ then $p+1$ is divisible by $24$, and $p \ne 23$ since we need $\O(p-1)=3$, so $\O(p+1) \geqs 5$. But we have already noted that $\O(p^2-p+1) \geqs 2$, whence $\O(p^3+1) \geqs 7$. Finally, suppose $p \equiv 11,19,29 \imod{40}$. These cases are similar, so let us assume $p \equiv 11 \imod{40}$. Here $(p-1)/10$ is a prime and $p+1$ is divisible by $12$, so $\O(p+1) \geqs 4$ since $p \ne 11$. Since $\O(p^3+1) \leqs 6$, it follows that $(p+1)/12$ and $(p^2-p+1)/3$ are both primes. Now $p^6 \equiv 1 \imod{7}$, so one of $(p-1)/10$, $p^2+p+1$, $(p+1)/12$ or $(p^2-p+1)/3$ must be equal to $7$, but it is easy to see that this is not possible. For example, if $(p-1)/10=7$ then $p = 71$ does not satisfy the required congruence condition.
\end{proof}

\subsection{Proof of Theorem \ref{t:maincr}}

Recall that ${\rm cr}(G) = l(G)/\l(G)$ is the \emph{chain ratio} of $G$. In this section we prove Theorem \ref{t:maincr}, which states that 
\[
{\rm cr}(G) \geqs \frac{5}{4}
\]
for every finite non-abelian simple group $G$, with equality if and only if $l(G) = 5$ and $\l(G)=4$ (all such groups are of the form ${\rm L}_{2}(q)$; see Remark \ref{r:cr} for further details). 

We partition the proof into several cases. First, Lemma \ref{l:salt} handles the sporadic and alternating groups, and Lemma \ref{l:c2} deals with the groups of the form ${\rm L}_{2}(q)$. The proof for the remaining groups of Lie type is covered by Lemmas \ref{l:ex} and \ref{l:c1}, where the exceptional and classical groups are handled, respectively.

\begin{lem}\label{l:salt}
Theorem \ref{t:maincr} holds if $G$ is a sporadic or alternating group.
\end{lem}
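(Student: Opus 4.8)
The plan is to treat the two families separately, in each case reducing the bound to data already available in the literature: both the lengths and the depths of these groups have been tabulated, so the proof is essentially a finite verification together with a growth estimate.

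For the sporadic groups this is a direct inspection. The length of each sporadic group $G$ is recorded in \cite[Tables III and IV]{CST}, and its depth $\l(G)$ in \cite[Table 2]{BLS1}. I would read off the pair $(l(G),\l(G))$ for each of the $26$ groups and check that $l(G)/\l(G)\geqs 5/4$. The cases to watch are those with smallest depth, since a large ratio is automatic when $l(G)$ is large; for instance the sporadic groups of depth $3$ in Table~\ref{tab:main} (namely ${\rm M}_{23}$ and $\BM$) have length well above $\tfrac{15}{4}$, so their ratio comfortably exceeds $5/4$. Crucially, no sporadic group satisfies $l(G)=5$, as none appears in the length-$5$ row of Table~\ref{tab:length}; hence no sporadic group has $(l(G),\l(G))=(5,4)$, and I must confirm that the inequality is in fact \emph{strict}, that is, no sporadic $G$ satisfies $4\,l(G)=5\,\l(G)$. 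This is immediate from the tabulated values.

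For the alternating groups I would combine the exact length formula \eqref{e:an} with the fact, established in \cite{BLS1}, that $\l(A_n)\leqs 23$ for all $n$. First I note that $l(A_n)$ is non-decreasing: passing from $n$ to $n+1$ raises $\lfloor(3n-1)/2\rfloor$ by $1$ or $2$ while $b_{n+1}-b_n\leqs 1$, so \eqref{e:an} gives $l(A_{n+1})-l(A_n)\geqs 0$. Since \eqref{e:an} yields $l(A_{23})=29$, we get $l(A_n)\geqs 29$ for every $n\geqs 23$, and combined with $\l(A_n)\leqs 23$ this gives ${\rm cr}(A_n)\geqs 29/23>5/4$ for all $n\geqs 23$. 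It then remains to check the finitely many groups $A_n$ with $5\leqs n\leqs 22$ directly, using \eqref{e:an} for the length and the depth computations in \cite{BLS1}. Here $A_5$ has $(l,\l)=(4,3)$ with ratio $4/3$, while $A_6\cong{\rm L}_2(9)$ has $(l,\l)=(5,4)$, giving ${\rm cr}(A_6)=5/4$; this is the unique equality case among the alternating groups, consistent with Remark~\ref{r:cr}. All other $A_n$ in this range have ratio strictly above $5/4$.

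Since both arguments are verifications rather than conceptual, I do not expect a serious obstacle. The one point needing care is the equality analysis: I must ensure that $A_6$ is correctly singled out as the only alternating group attaining $5/4$ and that no sporadic group attains it, so that the 'if and only if' in Theorem~\ref{t:maincr} is respected. Fixing the explicit cutoff $n=23$ via the monotonicity of $l(A_n)$ is what reduces the alternating case to a clean finite check.
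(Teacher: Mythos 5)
Your proposal is correct and follows essentially the same route as the paper: sporadic groups by inspecting the length and depth tables in \cite{CST} and \cite{BLS1}, and alternating groups by combining the formula \eqref{e:an} with the bound $\l(A_n)\leqs 23$ for $n\geqs 23$, leaving a finite check for smaller $n$ (the paper does this check with {\sc Magma}, finding $\l(A_n)\leqs 6$ for $n<23$, rather than citing tabulated depths). Your explicit monotonicity argument for $l(A_n)$ is a nice way to make the cutoff at $n=23$ rigorous, a point the paper leaves as ``one checks''.
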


\begin{proof}
First assume $G$ is a sporadic group. The length and depth of $G$ is given in \cite[Tables III and IV]{CST} and \cite[Table 2]{BLS1}, respectively, and we immediately deduce that ${\rm cr}(G) \geqs 3/2$, with equality if and only if $G = {\rm J}_{1}$.

Now assume $G = A_n$ is an alternating group. The length of $G$ is given in \eqref{e:an} and
\cite[Theorem 2]{BLS1} states that $\l(G) \leqs 23$. One checks that this bound is sufficient if $n \geqs 23$. For example, if $n=23$ then $l(G) = 34-4-1=29$ and thus ${\rm cr}(G) \geqs 29/23>5/4$. For $n<23$ we can use
{\sc Magma} to show that $\l(G) \leqs 6$, with equality if and only if $n=16$. In view of the above formula for $l(G)$, we deduce that ${\rm cr}(G)>5/4$ if $n \geqs 8$. For the smallest values of $n$, we get ${\rm cr}(A_5) = 4/3$, ${\rm cr}(A_6)=5/4$ and ${\rm cr}(A_7)=3/2$.
\end{proof}

\begin{lem}\label{l:c2}
Theorem \ref{t:maincr} holds if $G \cong {\rm L}_{2}(q)$.
\end{lem}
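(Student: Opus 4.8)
The plan is to exploit the equivalence
\[
{\rm cr}(G)=\frac{l(G)}{\l(G)}\geqs \frac54\iff 4\,l(G)\geqs 5\,\l(G),
\]
with equality precisely when $l(G)=5$ and $\l(G)=4$, and to read $l(G)$ off from Lemma \ref{l:easy}. Write $q=p^f$ and set $a=\O(q-1)$, $b=\O(q+1)$. The two tools for bounding the depth will be: the dihedral maximal subgroups $D_{q-1}$, $D_{q+1}$ (maximal for $q\geqs 13$ by \cite{BHR}), which are supersoluble and give $\l(G)\leqs 1+\min\{a,b\}$; and the chain of subfield subgroups descending to ${\rm L}_{2}(p)$, using that ${\rm L}_{2}(q_0)$ is maximal in ${\rm L}_{2}(q_0^r)$ for $r$ an odd prime and that ${\rm L}_{2}(q_0^2)>{\rm PGL}_{2}(q_0)>{\rm L}_{2}(q_0)$. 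First I would clear the finitely many small $q$ (where these maximalities and the length formula have exceptions) with {\sc Magma}; here $A_6={\rm L}_{2}(9)$ gives the equality ${\rm cr}=5/4$. For $q=p$ prime, \cite[Corollary 3.4]{BLS1} gives $\l(G)\in\{3,4\}$, so if $\l(G)=3$ then $l(G)\geqs 4$ and ${\rm cr}(G)\geqs 4/3>5/4$, while if $\l(G)=4$ then $\cd(G)\geqs 1$ forces $l(G)\geqs 5$ and ${\rm cr}(G)\geqs 5/4$ with equality exactly when $l(G)=5$. For $q=2^f$ (so $f\geqs 2$), Lemma \ref{l:easy}(i) gives $l(G)=a+f+1\geqs f+2$, and since in characteristic $2$ every subfield step costs one maximal subgroup down to ${\rm L}_{2}(2)\cong S_3$ we get $\l(G)\leqs \O(f)+2$; the elementary estimate $4(f+2)>5(\O(f)+2)$, valid for all $f\geqs 2$ because $f\geqs 2^{\O(f)}$, then yields the strict inequality.

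The substantive case is $q=p^f$ with $p$ odd and $f\geqs 2$, where Lemma \ref{l:easy}(ii) gives $l(G)=\max\{a+f,\,b+1\}$; the key elementary bounds are $l(G)\geqs \min\{a,b\}+f$ and $l(G)\geqs \max\{a,b\}+1$. Combining $l(G)\geqs \min\{a,b\}+f$ with the dihedral bound $\l(G)\leqs 1+\min\{a,b\}$ gives ${\rm cr}(G)\geqs \frac{\min\{a,b\}+f}{\min\{a,b\}+1}\geqs \frac54$ whenever $\min\{a,b\}\leqs 4f-5$. In the complementary range $\min\{a,b\}\geqs 4f-4$, I would instead use the subfield bound $\l(G)\leqs \O(f)+v+4$, where $v\leqs\O(f)$ counts the factors $2$ in $f$ (each contributing the extra ${\rm PGL}_{2}$ step): then $l(G)\geqs a+f\geqs 5f-4$ while $\l(G)\leqs 2\O(f)+4$, and $4(5f-4)\geqs 5(2\O(f)+4)$ holds for all $f\geqs 3$ since $\O(f)\leqs\log_2 f$. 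This settles every odd $q=p^f$ with $f\geqs 3$.

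The one remaining configuration, and the real crux, is $f=2$ with $\min\{a,b\}\geqs 4$, where both crude bounds only give a ratio close to $1$. Here I would feed in the finer depth analysis of ${\rm L}_{2}(p^2)$ underlying Lemma \ref{l:cdl2}: via ${\rm PGL}_{2}(p)<{\rm L}_{2}(p^2)$ one has $\l({\rm L}_{2}(p^2))\leqs \l({\rm L}_{2}(p))+2\leqs 6$, and $\l({\rm L}_{2}(p^2))=6$ forces $\l({\rm L}_{2}(p))=4$, hence $\O(p\pm1)\geqs 3$ and so $\O(p^2-1)\geqs 6$. The point is that $\min\{a,b\}\geqs 4$ already forces $p\geqs 7$, whence $a=\O(p^2-1)\geqs 5$ and $l(G)\geqs a+2\geqs 7$. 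Thus if $\l(G)\leqs 5$ then $l(G)\geqs 7\geqs \tfrac54\,\l(G)$, while if $\l(G)=6$ the implication above gives $\O(p^2-1)\geqs 6$ and hence $l(G)\geqs 8\geqs \tfrac54\cdot 6$; in either case ${\rm cr}(G)\geqs 5/4$.

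Finally I would check the equality statement. In each of the arguments above the bound ${\rm cr}(G)=5/4$ is attained only when $l(G)=5$ and $\l(G)=4$: for $f\geqs 3$ the depth in both ranges is at least $8$, for $q=2^f$ and for depth-$3$ primes the inequality is strict, and for $f=2$ one has $l(G)\geqs 7$ whenever $p\geqs 7$, so the only equalities come from the prime case together with $q=9$. It remains to verify that these are exactly the groups listed in Remark \ref{r:cr}, which follows by combining the prime-case analysis with Theorem \ref{t:length} and \cite[Theorem 1]{BLS1}. The main obstacle throughout is the $f=2$ balanced regime: the supersoluble (dihedral) and subfield depth bounds are both too weak there, and one genuinely needs the sharp depth determination for ${\rm L}_{2}(p^2)$ from \cite{BLS1} to close the gap.
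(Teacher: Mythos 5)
Your characteristic-$2$ case rests on a false depth bound, and this is a genuine gap. You claim the subfield chain descends one maximal step per prime factor of $f$ all the way to ${\rm L}_{2}(2)\cong S_3$, giving $\l(G)\leqs \O(f)+2$ for $q=2^f$. But ${\rm L}_{2}(2)\cong D_6$ is \emph{not} maximal in ${\rm L}_{2}(2^r)$ for an odd prime $r$: since $3$ divides $2^r+1$, it sits inside the maximal dihedral subgroup $D_{2(2^r+1)}$ (this is precisely why the subfield rows in \cite{BHR} exclude $q_0=2$ except when $r=2$). So the descent stalls at ${\rm L}_{2}(2^r)$, whose maximal subgroups are only $2^r{:}(2^r-1)$, $D_{2(2^r-1)}$ and $D_{2(2^r+1)}$, giving $\l({\rm L}_{2}(2^r))=2+\min\{\O(2^r-1),\O(2^r+1)\}$, which is not bounded by $2$. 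Concretely, for $f=11$ we have $2^{11}-1=23\cdot 89$ and $2^{11}+1=3\cdot 683$, so $\l({\rm L}_{2}(2^{11}))=4$, while your bound asserts $\l\leqs\O(11)+2=3$; hence your inequality $4(f+2)>5(\O(f)+2)$ does not prove the claim. The gap is repairable: descend instead through the Borel subgroup $2^f{:}(2^f-1)$, of chief length $1+\O(2^f-1)$, to get $\l(G)\leqs \O(q-1)+2$ (the paper quotes $\l(G)\leqs \O(q-1)+\O(f)+1$ from \cite[Theorem 4(i)]{BLS1}). But then $\O(q-1)$ can no longer be discarded from the depth side, and the required inequality becomes $4f>\O(2^f-1)+6$, which needs the further estimate $\O(2^f-1)\leqs f\log_3 2<f$ — exactly the extra step the paper's proof makes.

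The remainder of your argument is correct and is organised genuinely differently from the paper's. For odd $q=p^f$ with $f\geqs 2$, the paper splits on the parity of $f$ and disposes of the boundary cases $f\in\{2,3\}$ (and $f\in\{4,6,8\}$) by combining $\l(G)\leqs 5$ or $6$ with $\cd(G)\geqs 2$ from Theorem \ref{t:bwz}; you instead split on whether $\min\{\O(q\pm1)\}\leqs 4f-5$, playing the dihedral bound $\l(G)\leqs 1+\min\{\O(q\pm 1)\}$ against the subfield bound $\l(G)\leqs 2\O(f)+4$, and you settle the crux $f=2$ via the dichotomy: either $\l(G)\leqs 5$ and $l(G)\geqs 7$, or $\l(G)=6$, which through ${\rm PGL}_{2}(p)<{\rm L}_{2}(p^2)$ and \cite[Corollary 3.4]{BLS1} forces $\O(p\pm1)\geqs 3$ and hence $l(G)\geqs 8$. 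This is valid and avoids the chain-difference classification of Theorem \ref{t:bwz} entirely, which is a nice self-contained alternative. One further slip to fix: your strictness claim for $f\geqs 3$ ("the depth in both ranges is at least $8$") is false as literally stated — for instance $\l({\rm L}_{2}(3^5))=3$ by Table \ref{tab:main} — and what you actually need is that equality ${\rm cr}(G)=5/4$ in the first range would force $\l(G)=4f-4\geqs 8$, contradicting your own subfield bound $\l(G)\leqs 2\O(f)+4$ when $f\geqs 3$; that one line should be made explicit.
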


\begin{proof}
Write $q=p^f$ with $p$ a prime. If $f=1$ then $\l(G) \in \{3,4\}$ by \cite[Corollary 3.4]{BLS1}, and we have $l(G) \geqs \l(G)+1$, so ${\rm cr}(G) \geqs 5/4$ and equality holds if and only if $\l(G)=4$ and $l(G)=5$. The result now follows by combining Theorems \ref{t:length} and \ref{t:bwz}. For the remainder, we may assume $f \geqs 2$.

Suppose $p=2$. Here $l(G) = \O(q-1)+f+1$ and $\l(G) \leqs \O(q-1)+\O(f)+1$ by \cite[Theorem 1]{ST} and \cite[Theorem 4(i)]{BLS1}. Now
\[
\O(q-1)+f+1 > \frac{5}{4}(\O(q-1)+\O(f)+1)
\]
if and only if
\begin{equation}\label{e:now}
\O(q-1)+5\O(f)+1 < 4f.
\end{equation}
Since
\[
\O(q-1)+5\O(f)+1 < f+5\log_2f+1,
\]
it is routine to check that \eqref{e:now} holds for all $f \geqs 2$.

Now suppose $p>2$ and observe that $l(G) \geqs \O(q-1)+f$ by Lemma \ref{l:easy}(ii). First assume $f \geqs 3$ is odd. By considering a chain of subfield subgroups (as in the proof of \cite[Theorem 4]{BLS1}), we deduce that $\l(G) \leqs \O(f)+\l({\rm L}_{2}(p))$. Therefore, $\l(G) \leqs \O(f)+2$ if $p=3$, so
\[
l(G) \geqs \O(q-1)+f \geqs f+2 > \frac{5}{4}(\log_3f+2) \geqs \frac{5}{4}(\O(f)+2) \geqs \frac{5}{4}\l(G)
\]
as required. Similarly, if $p \geqs 5$ then $l(G) \geqs f+3$, $\l(G) \leqs \O(f)+4$ and for $f>3$ the result follows in the same way. If $f=3$ then $\l(G) \leqs 5$ and Theorem \ref{t:bwz} implies that ${\rm cd}(G) \geqs 2$, so ${\rm cr}(G)>5/4$.

Finally, let us assume $p>2$ and $f$ is even. If $q=9$ then $G \cong A_6$ and we have already noted that ${\rm cr}(G)=5/4$ in this case. Now assume $q>9$, so $\O(q-1) \geqs 4$ and thus $l(G) \geqs f+4$. Also observe that $\l(G) \leqs 2\O(f)+\l({\rm L}_{2}(p))$. If $p=3$ then $f \geqs 4$, $\l(G) \leqs 2\O(f)+2$ and one checks that
\[
f+4 > \frac{5}{4}(2\log_2f +2),
\]
which gives the desired result. Now assume $p \geqs 5$. Here $\l(G) \leqs 2\O(f)+4$ and we have
\[
f+4 > \frac{5}{4}(2\log_2f +4)
\]
if $f>8$. If $f \in \{4,6,8\}$ then $\O(q-1) \geqs 6$, so $l(G) \geqs f+6$ and the result follows.
Finally, if $f=2$ then $\l(G) \leqs 6$ (since ${\rm PGL}_{2}(p)<G$ is maximal) and ${\rm cd}(G) \geqs 2$ by Theorem \ref{t:bwz}, so ${\rm cr}(G)>5/4$ as required.
\end{proof}

\begin{lem}\label{l:ex}
Theorem \ref{t:maincr} holds if $G$ is an exceptional group of Lie type.
\end{lem}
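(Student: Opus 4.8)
Since ${\rm cr}(G) = l(G)/\l(G) = 1 + \cd(G)/\l(G)$, the bound ${\rm cr}(G) \geqs 5/4$ is equivalent to $4\,\cd(G) \geqs \l(G)$, and equality would force $l(G)=5$ and $\l(G)=4$. By Theorem \ref{t:length} the only simple groups of length $5$ are of the form ${\rm L}_{2}(q)$, none of which is of exceptional type, so no exceptional group can attain equality. The plan is therefore to prove the strict inequality $l(G) > \tfrac54\,\l(G)$ for each exceptional type
\[
G \in \{\, {}^2B_2(q),\, {}^2G_2(q),\, {}^2F_4(q)',\, {}^3D_4(q),\, G_2(q),\, F_4(q),\, E_6^{\e}(q),\, E_7(q),\, E_8(q)\,\},
\]
writing $q=p^f$ throughout.

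For the lower bound on length I would use a Borel subgroup $B$ together with the twisted rank $r$: since $B$ is soluble, $l(G) \geqs l(B)+r = \O(|B|)+r$. Writing $|B|=q^{N}|T_0|$ with $N$ the number of positive roots and $T_0 \leqs B$ the maximal torus, this gives $l(G) \geqs Nf + \O(|T_0|) + r$. The decisive feature is that $N$ is large for every exceptional type (for instance $N=6$ for $G_2$ and $N=120$ for $E_8$), so $l(G)$ grows linearly in $f$ with a large coefficient, while the torus contribution $\O(|T_0|)$ only strengthens the bound.

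For the upper bound on depth I would pass through a single well-chosen maximal subgroup $M$ and apply $\l(G) \leqs \l(M)+1$, as in Lemma \ref{l:cdr2}. For the fixed-characteristic families ${}^2B_2(q)$, ${}^2G_2(q)$ and ${}^2F_4(q)'$ the bounds already obtained in the proof of Lemma \ref{l:cdr2} (via a dihedral torus normaliser or an involution centraliser $2\times{\rm L}_{2}(q)$) give $\l(G) \leqs \O(q-1)+c$; since $\O(q-1)$ grows at most linearly in $f$ here, the required inequality $4\,\cd(G) \geqs \l(G)$ follows at once from the large chain differences established there. For the remaining families I would first reduce to the prime field through a chain of subfield subgroups, exactly as in Lemma \ref{l:c2}, obtaining $\l(G) \leqs c_1\,\O(f) + \l(G(p))$, and then bound $\l(G(p))$ by a constant depending only on the type. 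The latter follows from the depth estimates of \cite{BLS1}: concretely one cascades down through involution centralisers and reductive maximal subgroups of smaller rank until reaching bounded-depth rank-one and rank-two sections, where $\l({\rm L}_{2}(p)) \leqs 4$ and $\l({\rm L}_{3}(p)) \leqs 6$ by \cite[Corollary 3.4]{BLS1}. This gives $\l(G) \leqs c_1\,\O(f)+c_2$, a quantity negligible against the $Nf$ term in the length bound.

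The main obstacle I anticipate lies in the prime-field case $f=1$, where the slack in the inequality is smallest: here it reduces to verifying $l(G(p)) > \tfrac54\,\l(G(p))$ by comparing the constant bound on $\l(G(p))$ with the lower bound $l(G(p)) \geqs N+r$ coming from the Borel subgroup, which is automatic once $N$ is large enough. Consequently the finitely many smallest groups — such as $G_2(2)' \cong {\rm U}_{3}(3)$, $G_2(3)$, ${}^3D_4(2)$, ${}^2F_4(2)'$, $F_4(2)$, $E_6(2)$ and ${}^2E_6(2)$ — must be settled individually from their known length and depth together with the maximal-subgroup data in \cite{Atlas} and \cite{BHR}. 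With these disposed of, the linear-in-$f$ comparison handles all larger $q$, and the strict inequality ${\rm cr}(G) > 5/4$ follows in every case.
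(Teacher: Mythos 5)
Your overall strategy coincides with the paper's own proof: the Borel subgroup bound $l(G) \geqs \O(|B|)+r$ supplies a length lower bound growing linearly in $f$ with a large coefficient, the depth is bounded by $c_1\O(f)+c_2$ via subfield chains plus constant bounds on the prime-field groups obtained from well-chosen maximal subgroups (the paper uses, for instance, $F_4(p) > 2.\O_9(p)$ with $A_{10}$, $S_{10}$ or $A_{11}$ maximal in $\O_9(p)$, $F_4(p)$ maximal in $E_6^{\e}(p)$, and $G_2(2) < G_2(p)$, rather than your cascade to rank-one and rank-two sections, but it is the same idea), and the smallest groups are checked individually. For ${}^2B_2(q)$ and ${}^2G_2(q)$ your recycling of Lemma \ref{l:cdr2} is also sound: the chain differences established there grow linearly in $f$ (at least $2f-1$, respectively $3f-2$), while the depth bounds $\l(G) \leqs \O(q-1)+c$ grow at most linearly with a smaller effective slope, so $4\,\cd(G) \geqs \l(G)$ does follow.

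However, your treatment of ${}^2F_4(q)'$ has a genuine gap. The proof of Lemma \ref{l:cdr2} establishes for this family only the \emph{constant} bound $\cd(G) \geqs 6$, obtained by applying Lemma \ref{l:bwz} to the section ${}^2F_4(2)'$; it proves no depth bound of the form $\O(q-1)+c$, and the subgroups you invoke do not exist in ${}^2F_4(q)'$: in characteristic $2$ its involution centralisers have large unipotent radicals (involving ${}^2B_2(q)$ and ${\rm Sp}_4(q)$), not the shape $2\times{\rm L}_2(q)$, and its torus normalisers, such as $(q+1)^2{:}{\rm GL}_2(3)$, are not dihedral. More importantly, even granting a depth bound linear in $f$, a constant chain difference cannot give $4\,\cd(G) \geqs \l(G)$ for all $f$: the best available depth bound is $\l(G) \leqs \O(f)+5$ (via subfield chains down to the Tits group), which is unbounded in $f$, so your argument fails once $\O(f) > 19$. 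The repair is to treat ${}^2F_4(q)'$ exactly like your ``remaining families'', as the paper does: for $f>1$ compare $l(G) \geqs 12f+2$ (the Borel bound) with $\l(G) \leqs \O(f)+5$, and check the Tits group separately, where $\l({}^2F_4(2)')=4$ and $l({}^2F_4(2)') \geqs 13$ via its soluble maximal subgroup $2.[2^8].5.4$. A minor further point: your opening claim that equality ${\rm cr}(G)=5/4$ ``would force $l(G)=5$ and $\l(G)=4$'' presupposes part of Theorem \ref{t:maincr} (a priori equality could occur with $l(G)=10$, $\l(G)=8$); this is harmless only because your plan, like the paper's, actually proves the strict inequality $l(G) > \frac{5}{4}\l(G)$ in every exceptional case.
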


\begin{proof}
Let $G$ be a finite simple exceptional group of Lie type over $\mathbb{F}_q$, where $q=p^f$ and $p$ is a prime. Let $B$ be a Borel subgroup of $G$ and let $r$ be the twisted Lie rank of $G$. Then
\begin{equation}\label{e:bd}
l(G) \geqs \O(|B|)+r
\end{equation}
and \cite[Theorem 4]{BLS1} gives
\[
\l(G) \leqs 3\O(f)+36
\]
if $G \ne {}^2B_2(q)$.

First assume $G = E_8(q)$. Here $|B|=q^{120}(q-1)^8$ so
\[
l(G) \geqs 120f+8 > \frac{5}{4}(3\log_2f +36) \geqs \frac{5}{4}(3\O(f)+36) \geqs \frac{5}{4}\l(G)
\]
and the result follows. The case $E_7(q)$ is handled in exactly the same way, and similarly $E_6^{\e}(q)$ and $F_4(q)$ with $f \geqs 2$. Suppose $G = F_4(p)$, so $l(G) \geqs 28$ by \eqref{e:bd}. If $p=2$ then
${}^2F_4(2)<G$ is maximal and $\l({}^2F_4(2)) =5$, so $\l(G) \leqs 6$ and the result follows.
Similarly, if $p$ is odd then
\[
\l(F_4(p)) \leqs \l(2.\O_9(p)) +1 \leqs \l(\O_9(p))+2
\]
and one of $A_{10}$, $S_{10}$ or $A_{11}$ is a maximal subgroup of $\O_9(p)$ (see \cite[Table 8.59]{BHR}). Therefore $\l(\O_9(p)) \leqs 7$, so $\l(G) \leqs 9$ and once again we deduce that ${\rm cr}(G) > 5/4$. If $G = E_6^{\e}(p)$ then $F_4(p)<G$ is maximal, so the previous argument yields $\l(G) \leqs 10$ and the result quickly follows.

Next assume $G=G_2(q)'$. If $q=2$ then $G \cong {\rm U}_{3}(3)$ and one checks that $\l(G) = 4$ and $l(G) = 7$. Now assume $q>2$. Since $|B|=q^6(q-1)^2$ it follows that $l(G) \geqs 6f+4$ and by considering a chain of subfield subgroups we deduce that $\l(G) \leqs \O(f)+\l(G_2(p))$. If $p \geqs 5$ then $G_2(2)<G_2(p)$ is maximal, so $\l(G_2(p)) \leqs 6$ and it is easy to check that the same bound holds if $p=2$ or $3$. Therefore $\l(G) \leqs \O(f)+6$ and we deduce that
\[
l(G) \geqs 6f+4 > \frac{5}{4}(\log_2f + 6) \geqs \frac{5}{4}(\O(f)+6) \geqs \frac{5}{4}\l(G).
\]
If $G = {}^3D_4(q)$ then $G_2(q)<G$ is maximal and thus $\l(G) \leqs \O(f)+7$. In addition, $|B|=q^{12}(q^3-1)(q-1)$, so $l(G) \geqs 12f+2$ and the result follows.

To complete the proof of the lemma, we may assume $G = {}^2F_4(q)'$, ${}^2G_2(q)$ or ${}^2B_2(q)$. Suppose  $G = {}^2F_4(q)'$, so $q=2^f$ with $f$ odd. If $f=1$ then $\l(G) = 4$ and $l(G) \geqs 13$ since $G$ has a soluble maximal subgroup of the form $2.[2^8].5.4$. Similarly, if $f>1$ then $\l(G) \leqs \O(f)+5$ (see the proof of \cite[Theorem 4]{BLS1}), $l(G) \geqs 12f+2$ and these bounds are sufficient. The case $G = {}^2G_2(q)'$, where $q=3^f$ with $f$ odd, is very similar. If $f=1$ then $G \cong {\rm L}_{2}(8)$, so $\l(G)=3$ and $l(G)=5$. If $f>1$, then the proof of \cite[Theorem 4]{BLS1} gives $\l(G) \leqs \O(f)+4$ and we have $l(G) \geqs 3f+2$ since $|B|=q^3(q-1)$. It is easy to check that these bounds are sufficient.

Finally, let us assume $G = {}^2B_2(q)$, where $q=2^f$ with $f \geqs 3$ odd. Since $|B|=q^2(q-1)$, it follows that $l(G) \geqs 2f+1+\O(q-1) \geqs 2f+2$. By \cite[Theorem 4]{BLS1}, we also have
\[
\l(G) \leqs \O(f)+1+\O(q-1) < \O(f)+f+1.
\]
Therefore,
\[
l(G) \geqs 2f+2 > \frac{5}{4}(\log_2f + f+1) \geqs \frac{5}{4}\l(G)
\]
as required.
\end{proof}

\begin{lem}\label{l:c1}
Theorem \ref{t:maincr} holds if $G$ is a classical group.
\end{lem}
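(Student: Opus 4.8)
The plan is to follow exactly the template established in Lemmas \ref{l:ex} and \ref{l:c2}: for each classical group $G$ over $\F$ with $q=p^f$, produce a lower bound on $l(G)$ that grows like the order of a Borel subgroup plus the rank, and combine it with an upper bound on $\l(G)$ coming from \cite[Theorem 4]{BLS1} (which gives something like $\l(G) \leqs c\O(f)+C$ for absolute constants) or from a well-chosen chain through a small maximal subgroup. Since $l(G) \geqs \O(|B|)+r$ where $B$ is a Borel subgroup and $r$ is the rank, for groups of large rank or large dimension this lower bound dwarfs the essentially constant-plus-logarithmic depth bound, so the inequality $l(G) > \tfrac54 \l(G)$ holds comfortably with strict inequality. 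The genuinely delicate part is to locate all the boundary cases where $\mathrm{cr}(G)$ could drop to exactly $5/4$, and to confirm that none of the classical groups other than $\mathrm{L}_2(q)$ achieve equality.

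First I would dispose of the high-rank and high-dimensional families. For $G = {\rm P\O}_n^\e(q)$ with $n\geqs 7$, ${\rm PSp}_n(q)$ with $n \geqs 6$, and ${\rm L}_n^\e(q)$ with $n$ large, the length is bounded below by $\O(|B|)+r$, and the exponent of $q$ in $|B|$ grows quadratically in $n$ while the depth stays bounded by an absolute constant plus $c\,\O(f) \leqs c\log_p q$ (using \cite[Theorem 4]{BLS1}). A crude estimate of the shape $l(G) \geqs a n^2 f$ against $\l(G) \leqs b\,\O(f)+C$ settles these immediately, paralleling the $E_8(q)$ and $E_7(q)$ computations in Lemma \ref{l:ex}. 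The reduction here is entirely mechanical: the only care needed is to keep $f$ small cases honest by noting $\O(q-1) \geqs 1$ and $l(G) \geqs l(\text{parabolic})+1$ to force growth even when $f=1$.

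Next I would handle the low-dimensional cases, which are where equality might plausibly occur and which therefore demand the real work. The relevant groups are ${\rm L}_3^\e(q)$, ${\rm U}_4(q)$, ${\rm PSp}_4(q)$ and a handful of small-field exceptions, together with any sporadic isomorphisms (${\rm U}_4(2) \cong {\rm PSp}_4(3)$, ${\rm L}_4(2)\cong A_8$, and so on) that must be cross-checked against the alternating-group analysis in Lemma \ref{l:salt}. For each such $G$ I would bound $l(G)$ below using the length of a Borel or maximal parabolic (often via $l(H)+1$) and bound $\l(G)$ above by passing through a small maximal subgroup such as ${\rm SO}_3(q) \cong {\rm PGL}_2(q)$ or a subfield subgroup, invoking \cite[Corollary 3.4]{BLS1} and the additivity lemmas. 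The key observation, exactly as in Lemma \ref{l:cdr2}, is that for these groups ${\rm cd}(G)$ is already at least $3$, so $l(G) = \l(G)+{\rm cd}(G) \geqs \l(G)+3$ forces $\mathrm{cr}(G) \geqs 1 + 3/\l(G) > 5/4$ provided $\l(G) > 12$; and for the finitely many cases with small depth one simply reads off $l(G)$ and $\l(G)$ from the length/depth data (Theorems \ref{t:length} and \cite[Theorem 1]{BLS1}) and checks the ratio by hand.

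The main obstacle I anticipate is not any single hard inequality but the bookkeeping of the borderline small cases: ensuring that no classical group sneaks in with $l(G)=5,\ \l(G)=4$ (which would spuriously produce equality outside the ${\rm L}_2(q)$ family), and correctly treating the degenerate isomorphisms so that groups are not double-counted or mis-attributed. Concretely, one must verify that every classical group $G\not\cong {\rm L}_2(q)$ has $l(G) \geqs 7$ (using Theorem \ref{t:length}, which lists all simple groups of length at most $9$ and shows the only non-${\rm L}_2$ entries of small length have large chain difference), whence either $\l(G)$ is large enough that the logarithmic depth bound wins, or ${\rm cd}(G)\geqs 2$ forces strict inequality directly. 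I would therefore close the proof by remarking that, across all classical $G\not\cong {\rm L}_2(q)$, we obtain $\mathrm{cr}(G) > 5/4$ with strict inequality, so that equality in Theorem \ref{t:maincr} is attained only by the groups ${\rm L}_2(q)$ identified in Lemma \ref{l:c2}.
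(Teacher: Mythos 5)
Your overall architecture does match the paper's: a two-stage argument that first eliminates large rank by comparing $l(G)\geqs \O(|B|)+r$ with the bound $\l(G)\leqs 3\O(f)+36$ of \cite[Theorem 4]{BLS1}, and then treats the bounded-rank groups by explicit chains through small maximal subgroups together with chain-difference considerations. However, the pivotal step of your low-rank analysis is stated with the inequality \emph{inverted}, and as written it fails. From $\cd(G)\geqs 3$ one gets ${\rm cr}(G)=1+\cd(G)/\l(G)\geqs 1+3/\l(G)$, and this exceeds $5/4$ precisely when $\l(G)<12$, not when $\l(G)>12$ as you claim: if $\cd(G)=3$ and $\l(G)=20$, then ${\rm cr}(G)$ could be as small as $23/20<5/4$. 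The same confusion recurs in your closing dichotomy (``either $\l(G)$ is large enough that the logarithmic depth bound wins, or ${\rm cd}(G)\geqs 2$ forces strict inequality''): it is the \emph{length} that must be large (through growth in $f$ and the rank) for the first branch, and the \emph{depth} that must be small for the second --- concretely $\l(G)\leqs 11$ when $\cd(G)\geqs 3$ is known, and $\l(G)\leqs 7$ when only $\cd(G)\geqs 2$ is available. The chain-difference trick and the small-depth bound must work \emph{together}; your version sets them against each other.

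Once the inequality is righted, your plan essentially becomes the paper's, but then the entire burden falls on what your outline postpones: producing, for every classical group of bounded rank and bounded $f$, an explicit unrefinable chain certifying a depth bound small enough for the corrected criterion. This is where the paper spends almost all of its effort --- chains such as $\O_{12}^{+}(2)>{\rm Sp}_{10}(2)>\O_{10}^{-}(2).2>\O_{10}^{-}(2)>A_{12}$, or ${\rm L}_{r+1}(p)>{\rm PSO}_{r+1}(p)>\O_{r+1}(p)$, descents through ${\rm PSp}_{2r}$ inside unitary groups, and ad hoc cases like ${\rm U}_7(4)$ (via a maximal subgroup $3277{:}7$ of depth $3$). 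Moreover, at least one family escapes your dichotomy even after correction: for ${\rm U}_3(2^f)$ with $f$ even, the bound of \cite[Theorem 4]{BLS1} acquires an extra term linear in $f$, and the depth bound the paper actually uses (via the reducible maximal subgroup $\frac{q+1}{(3,q+1)}.{\rm L}_2(q)$) is itself unbounded, containing $\O(q^2-1)$; the ratio argument succeeds only because the coefficient of that shared term is small enough relative to the $3f$ in the length. So: the skeleton is the paper's, but the pivot step contains a genuine logical error, and the case-by-case substance that constitutes the proof is absent.
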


\begin{proof}
Let $G$ be a finite simple classical group over $\mathbb{F}_q$ and $r$ be the twisted rank of $G$. As before, write $q=p^f$ with $p$ a prime. Let $B$ be a Borel subgroup of $G$ and recall that \eqref{e:bd} holds. Our initial aim is to reduce the problem to groups of small rank. To do this, we will consider each family of classical groups in turn. In view of Lemma \ref{l:c2}, we may assume that $G \not\cong {\rm L}_{2}(q)$.

First assume $G = {\rm L}_{r+1}(q)$. We claim that ${\rm cr}(G)>5/4$ if $r \geqs 9$. To see this, first observe that
\[
l(G) \geqs \O(|B|)+r \geqs \frac{1}{2}fr(r+1)+r
\]
and $\l(G) \leqs 3\O(f)+36$ by \cite[Theorem 4]{BLS1}. For $r \geqs 9$, it is routine to check that
\[
l(G) \geqs \frac{1}{2}fr(r+1)+r > \frac{5}{4}(3\log_2f+36) \geqs \frac{5}{4}\l(G),
\]
which justifies the claim. In a similar fashion, we can reduce the problem to $r \leqs 6$ when $G = {\rm PSp}_{2r}(q)$, $\O_{2r+1}(q)$ or ${\rm P\O}_{2r}^{+}(q)$; $r \leqs 5$ when $G= {\rm P\O}_{2r+2}^{-}(q)$; and $r \leqs 4$ for $G = {\rm U}_{2r}(q)$.

Finally, suppose $G = {\rm U}_{2r+1}(q)$. Here $l(G) \geqs fr(2r+1)+r$ and \cite[Theorem 4]{BLS1} states that $\l(G) \leqs 3\O(f)+36$ if $q$ or $f$ is odd. If $q=2^f$ and $f$ is even, then the same theorem gives
\[
\l(G) \leqs 3\O(f)+35+2\O(2^{2^a}+1),
\]
where $f=2^ab$ and $b$ is odd. Since $\O(2^{2^a}+1) \leqs f$, it follows that $\l(G) \leqs 3\O(f)+2f+35$ for all possible values of $q$ and $f$, and one checks that
\[
l(G) \geqs fr(2r+1)+r > \frac{5}{4}(3\log_2f+2f+35) \geqs \frac{5}{4}(3\O(f)+2f+35) \geqs \frac{5}{4}\l(G)
\]
if $r \geqs 5$.

Therefore, in order to complete the proof of the lemma, we may assume that we are in one of the following cases, which will be treated in alphabetical order: 
\begin{itemize}\addtolength{\itemsep}{0.2\baselineskip}
\item[{\rm (a)}] $G = \O_{2r+1}(q)$ with $3 \leqs r \leqs 6$ and $q$ odd;
\item[{\rm (b)}] $G = {\rm PSp}_{2r}(q)$ with $2 \leqs r \leqs 6$;
\item[{\rm (c)}] $G = {\rm P\O}_{2r}^{+}(q)$ with $4 \leqs r \leqs 6$;
\item[{\rm (d)}] $G = {\rm P\O}_{2r+2}^{-}(q)$ with $3 \leqs r \leqs 5$;
\item[{\rm (e)}] $G = {\rm L}_{r+1}(q)$ with $2 \leqs r \leqs 8$;
\item[{\rm (f)}] $G = {\rm U}_{2r}(q)$ with $2 \leqs r \leqs 4$;
\item[{\rm (g)}] $G = {\rm U}_{2r+1}(q)$ with $1 \leqs r \leqs 4$.
\end{itemize}

Let us start by handling case (a). By considering a chain of subfield subgroups, we see that $\l(G) \leqs 2\O(f)+\l(\O_{2r+1}(p))$. In addition, the proof of \cite[Theorem 4]{BLS1} implies that
$\l(\O_{2r+1}(p)) \leqs 4+\l(S_n)$ for some $n \leqs 2r+3 = 15$. One checks that $\l(S_n) \leqs 6$ for $n \leqs 15$, hence $\l(G) \leqs 2\O(f) +10$. Now $|B| = \frac{1}{2}(q-1)^rq^{r^2}$, so \eqref{e:bd} yields
$l(G) \geqs fr^2 + 2r-1$ and one checks that
\[
fr^2+2r-1 > \frac{5}{4}(2\log_2f+10)
\]
for all possible values of $f$ and $r$. The result follows.

Next consider (b). First assume $p=2$, in which case $\l(G) \leqs \O(f)+\l({\rm Sp}_{2r}(2))$ and
$\l({\rm Sp}_{2r}(2)) \leqs 4+\l(S_n)$ for some $n \leqs 14$ (see the proof of \cite[Theorem 4]{BLS1}). Therefore, $\l(G) \leqs \O(f)+10$. Since $l(G) \geqs fr^2+r$ by \eqref{e:bd}, the result follows unless $(r,f) = (3,1)$, or if $r=2$ and $f\leqs 3$. If $(r,f) = (2,1)$ then $G \cong A_6$ and ${\rm cr}(G)=5/4$. In each of the remaining cases we have $\l(G) \leqs 5$ and ${\rm cd}(G) \geqs 2$, which implies the desired bound. Now assume $p$ is odd, so $l(G) \geqs fr^2+2r-1$ and the proof of \cite[Theorem 4]{BLS1} yields
\[
\l(G) \leqs 2\O(f) + \l({\rm PSp}_{2r}(p)) \leqs 2\O(f)+8+\l(S_r)\leqs 2\O(f)+13.
\]
This gives ${\rm cr}(G)>5/4$ unless $(r,f) = (3,1)$, or if $r=2$ and $f\leqs 4$. If $G = {\rm PSp}_{6}(p)$ then
\[
G>{\rm L}_{2}(p^3).3 > {\rm L}_{2}(p^3) > {\rm L}_{2}(p)
\]
is unrefinable, so $\l(G) \leqs 7$ and the result follows since ${\rm cd}(G) \geqs 2$. Now assume $r=2$ and $f \leqs 4$. Note that $|B|=\frac{1}{2}q^4(q-1)^2$, so $\O(|B|) = 4f+2\O(q-1)-1$. If $f=4$ then $\l(G) \leqs 4+\l({\rm PSp}_{4}(p))$ and we note that one of $A_6$, $S_6$ or $S_7$ is a maximal subgroup of ${\rm PSp}_{4}(p)$, so $\l({\rm PSp}_{4}(p)) \leqs 7$ and thus $\l(G) \leqs 11$. In addition, $\O(q-1) \geqs 5$, so $l(G) \geqs 27$ and the result follows. Similarly, if $f=2$ or $3$ then $\l(G) \leqs 8$ and $l(G) \geqs 15$. Finally, if $f=1$ then $\l(G) \leqs 7$ and the result follows since ${\rm cd}(G) \geqs 2$.

Now let us turn to case (c), so $G = {\rm P\O}_{2r}^{+}(q)$ and $r = 4,5$ or $6$. First assume $p=2$, in which case $l(G) \geqs fr(r-1)+r$ and $\l(G) \leqs \O(f)+\l(\O_{2r}^{+}(2))$. It is easy to check that $\l(\O_{2r}^{+}(2)) \leqs 9$. For example, if $r=6$ then there is an unrefinable chain
\[
\O_{12}^{+}(2) > {\rm Sp}_{10}(2) > \O_{10}^{-}(2).2 > \O_{10}^{-}(2) > A_{12}
\]
and $\l(A_{12})=5$, so $\l(\O_{12}^{+}(2)) \leqs 9$. Therefore, $l(G) \geqs 12f+4$, $\l(G) \leqs \O(f)+9$ and one checks that these bounds are sufficient. Now assume $p>2$. Here $l(G) \geqs fr(r-1)+2r-2$ and $\l(G) \leqs 3\O(f)+\l({\rm P\O}_{2r}^{+}(p))$. One checks that $\l({\rm P\O}_{2r}^{+}(p)) \leqs 9$. For instance, if $r=6$ then there is an unrefinable chain
\[
{\rm P\O}_{12}^{+}(p) > {\rm PSO}_{11}(p) > \O_{11}(p) > H
\]
with $H = A_{12}, S_{12}$ or $A_{13}$, and the claim follows since $\l(H) \leqs 6$. Therefore, $l(G) \geqs 12f+6$, $\l(G) \leqs 3\O(f)+9$ and we conclude that ${\rm cr}(G)>5/4$. A very similar argument applies in case (d) and we omit the details.

Next consider case (e), so $G = {\rm L}_{r+1}(q)$ and $\l(G) \leqs 2\O(f)+\l({\rm L}_{r+1}(p))$. Note that
\[
|B| = \frac{q^{r(r+1)/2}(q-1)^{r}}{(r+1,q-1)}.
\]
Suppose $r \in \{3,5,7\}$ is odd. Now ${\rm L}_{r+1}(p)$ has a maximal subgroup of the form ${\rm PSp}_{r+1}(p)$ or ${\rm PSp}_{r+1}(p).2$, and we noted that $\l({\rm PSp}_{r+1}(p)) \leqs 13$ in the analysis of case (b), whence $\l(G) \leqs 2\O(f)+15$. One now checks that the bound $l(G) \geqs fr(r+1)/2+r$ from \eqref{e:bd} is sufficient when $r=5$ or $7$. Now suppose $r=3$. If $q=2$ then ${\rm cr}(G)=9/5$ so we can assume $q>2$, in which case $l(G) \geqs 6f+5$. Now $\l({\rm L}_{4}(p)) = 5$ if $p=2$ or $3$, and $\l({\rm L}_{4}(p)) \leqs 9$ if $p \geqs 5$ (this follows from the fact that ${\rm PSp}_{4}(p).2$ is a maximal subgroup of ${\rm L}_{4}(p)$). Therefore, $\l(G) \leqs 2\O(f)+9$ and the result follows if $f>1$. Finally suppose $G = {\rm L}_{4}(p)$ with $p \geqs 3$. If $p=3$ then $\l(G)=5$ and $l(G) \geqs 11$. Similarly, $\l(G) \leqs 9$ and $l(G) \geqs 13$ if $p \geqs 5$. The result follows.

Now let us assume $G = {\rm L}_{r+1}(q)$ and $r \in \{2,4,6,8\}$. First assume $p$ is odd. There is an unrefinable chain ${\rm L}_{r+1}(p) > {\rm PSO}_{r+1}(p) > \O_{r+1}(p)$, so $\l({\rm L}_{r+1}(p)) \leqs 12$ and thus $\l(G) \leqs 2\O(f)+12$. Now $l(G) \geqs fr(r+1)/2+2r-1$ and the desired bound follows if $r>2$. Now assume $G = {\rm L}_{3}(q)$. Since $\O_3(p) \cong {\rm L}_{2}(p)$ we deduce that $\l({\rm L}_{3}(p)) \leqs 6$, so $\l(G) \leqs 2\O(f)+6$ and one checks that the bound $l(G) \geqs 3f+3$ is good enough if $f>2$. If $G = {\rm L}_{3}(p^2)$ then $l(G) \geqs 11$ and $\l(G) \leqs 8$ since there is an unrefinable chain
\[
{\rm L}_{3}(p^2) > {\rm L}_{3}(p).2 > {\rm L}_{3}(p) >  {\rm PSO}_{3}(p) > \O_3(p).
\]
Similarly, if $G = {\rm L}_{3}(p)$ then $\l(G) \leqs 6$ and we note that $l(G) \geqs 10$ if $p \geqs 5$ (this follows from Theorem \ref{t:length}). Finally, if $G = {\rm L}_{3}(3)$ then ${\rm cr}(G) = 8/3$.

To complete the analysis of case (e), let us assume $r \in \{2,4,6,8\}$ and $p=2$. Here $l(G) \geqs fr(r+1)/2+r$ and $\l(G) \leqs 2\O(f)+\l({\rm L}_{r+1}(2))$. As noted in the proof of \cite[Theorem 4]{BLS1}, there is an unrefinable chain ${\rm L}_{r+1}(2) > 2^r.{\rm L}_{r}(2) > {\rm L}_{r}(2)$ and one can check that $\l({\rm L}_{r}(2)) \leqs 5$, so $\l(G) \leqs 2\O(f)+7$. This gives the desired bound unless $r=2$ and $f \leqs 3$. We can exclude the case $f=1$ since ${\rm L}_{3}(2) \cong {\rm L}_{2}(7)$. For $f \in \{2,3\}$ we get $\l(G) \leqs 4$, $l(G) \geqs 9$ and the result follows.

To complete the proof of the lemma, it remains to handle the unitary groups of dimension at most $9$ arising in cases (f) and (g). First consider (f), so $G = {\rm U}_{2r}(q)$, $r \in \{2,3,4\}$ and
\[
|B| = \frac{q^{r(2r-1)}(q^2-1)^r}{(2r,q+1)}.
\]
By arguing as in the proof of \cite[Theorem 4]{BLS1}, we see that $\l(G) \leqs \l({\rm PSp}_{2r}(q))+2$.
If $p=2$, it follows that
\[
\l(G) \leqs \O(f)+2+\l({\rm Sp}_{2r}(2)) \leqs \O(f)+12
\]
(recall that $\l({\rm Sp}_{2r}(2)) \leqs 10$). In view of \eqref{e:bd} we have $l(G) \geqs fr(2r-1)+2r-1$ and one checks that these bounds are sufficient unless $r=2$ and $f=1,2$. Here we compute $\l({\rm U}_{4}(4)) = \l({\rm U}_{4}(2))=5$ and the result follows. Now assume $p>2$. Here
\[
\l(G) \leqs 2\O(f)+2+\l({\rm PSp}_{2r}(p)) \leqs 2\O(f)+15
\]
and \eqref{e:bd} gives $l(G) \geqs fr(2r-1)+4r-2$. These estimates give the result, unless $r=2$ and $f=1,2$. There is an unrefinable chain
\[
{\rm U}_{4}(p^2) > {\rm PSp}_{4}(p^2).2 > {\rm PSp}_{4}(p^2) > {\rm L}_{2}(p^2) > {\rm L}_{2}(p).2 > {\rm L}_{2}(p)
\]
and thus $\l(G) \leqs 9$ for $G = {\rm U}_{4}(p^2)$. Similarly, one checks that $\l(G) \leqs 9$ if $G = {\rm U}_{4}(p)$. In both cases $l(G) \geqs 12$ and the result follows.

Finally, let us consider case (g), where $G = {\rm U}_{2r+1}(q)$, $r \in \{1,2,3,4\}$ and
\[
|B| = \frac{q^{r(2r+1)}(q^2-1)^r}{(2r+1,q+1)}.
\]
First assume $p>2$. Here
\[
\l(G) \leqs \l(\O_{2r+1}(q)) + 2 \leqs 2\O(f)+2+\l(\O_{2r+1}(p)) \leqs 2\O(f)+12
\]
and \eqref{e:bd} gives $l(G) \geqs fr(2r+1)+4r-2$. One checks that these bounds are sufficient unless $r=1$ and $f \leqs 5$. Suppose $G = {\rm U}_{3}(p^f)$ with $f \leqs 5$. If $f =3$ or $5$ then $\l(G) \leqs 2+\l({\rm U}_{3}(p)) \leqs 8$ and the result follows since $l(G) \geqs 11$. If $f=4$ then $l(G) \geqs 14$ and there is an unrefinable chain
\[
{\rm U}_{3}(p^4) > {\rm PSO}_{3}(p^4) > \O_3(p^4) > {\rm L}_{2}(p^2).2 > {\rm L}_{2}(p^2) >  {\rm L}_{2}(p).2 > {\rm L}_{2}(p),
\]
so $\l(G) \leqs 10$. Similarly, if $f=2$ then $\l(G) \leqs 8$ and $l(G) \geqs 11$. Finally, suppose $G = {\rm U}_{3}(p)$. If $p \geqs 7$ then ${\rm U}_{3}(p)> {\rm PSO}_{3}(p)> \O_3(p)$ is unrefinable, so $\l(G) \leqs 6$. It is easy to check that the same bound holds when $p=3$ or $5$, and the desired result now follows since ${\rm cd}(G) \geqs 2$.

Now suppose $p=2$. If $f$ is odd then by considering a chain of subfield subgroups we get
$\l(G) \leqs 2\O(f)+\l({\rm U}_{2r+1}(2))$ and one checks that $\l({\rm U}_{2r+1}(2)) \leqs 6$. For example, ${\rm J}_{3}<{\rm U}_{9}(2)$ is maximal and $\l({\rm J}_{3})=5$, so $\l({\rm U}_{9}(2)) \leqs 6$. Therefore, $\l(G) \leqs 2\O(f)+6$. Since $l(G) \geqs fr(2r+1)+2r-1$, the result follows unless $r=1$ and $f=3$ (note that $(r,f) \ne (1,1)$ since ${\rm U}_{3}(2)$ is soluble). A routine computation gives ${\rm cr}({\rm U}_{3}(8)) = 4$.

Finally, suppose $p=2$ and $f$ is even. Here $l(G) \geqs fr(2r+1)+3r-1$ and we recall that $\l(G) \leqs 3\O(f)+2f+35$. These bounds are sufficient unless $(r,f) = (3,2)$, or $r=2$ and $f \in \{2,4,6\}$, or if $r=1$. If $(r,f) = (3,2)$ then $G = {\rm U}_{7}(4)$ has a maximal subgroup $3277{:}7$ of depth $3$, so $\l(G) \leqs 4$ and the result follows. Similarly, if $r=2$ and $f \in \{2,4,6\}$ then by considering an unrefinable chain through the maximal subgroup
\[
\frac{(q^5+1)}{(q+1)(5,q+1)}{:}5,
\]
we deduce that $\l(G) \leqs 5$ and the result follows since $l(G) \geqs 10f+5$. Finally, let us assume $G = {\rm U}_{3}(2^f)$ with $f$ even. Now $G$ has a reducible maximal subgroup of the form
\[
\frac{q+1}{(3,q+1)}.{\rm L}_{2}(q)
\]
and thus
\[
\l(G) \leqs \l({\rm L}_{2}(q)) +\O(q+1) - \O((3,q+1)) + 1 \leqs  \O(q^2-1)+\O(f)+2 - \O((3,q+1))
\]
since $\l({\rm L}_{2}(q)) \leqs \O(q-1)+\O(f)+1$ by \cite[Theorem 4]{BLS1}.
We also have
\[
l(G) = \O(q^2-1)+3f+1- \O((3,q+1))
\]
by \cite[Theorem 1]{ST}, and one checks that
\[
\O(q^2-1)+3f+1- \O((3,q+1)) > \frac{5}{4}(\O(q^2-1)+\O(f)+2 - \O((3,q+1)))
\]
if $\O(q^2-1)+6 < 7f$. Since $q=2^f$ we have $\O(q^2-1)+6 < 2f+6$ and the result follows.
\end{proof}

\vs

This completes the proof of Theorem \ref{t:maincr}. Notice that Corollary \ref{c:cr} follows immediately. Indeed, we have $l(G) \leqs a\, \cd(G)$ if and only if ${\rm cr}(G) \geqs a/(a-1)$, so Theorem \ref{t:maincr} implies that $a=5$ is the best possible constant.

\subsection{Proof of Theorem \ref{t:cd}}

We begin by recording some immediate consequences of Lemma \ref{l:bwz}.

\begin{prop}\label{p:cd2}
Let $G$ be a finite group.
\begin{itemize}\addtolength{\itemsep}{0.2\baselineskip}
\item[{\rm (i)}] If $1 = G_m \lhd G_{m-1} \lhd \cdots \lhd G_1 \lhd G_0 = G$ is a chain of subgroups
of $G$, then $\cd(G) \geqs \sum_i \cd(G_{i-1}/G_i)$.
\item[{\rm (ii)}] If $G = G_1 \times \cdots \times G_m$, then
$\cd(G) \geqs \sum_i \cd(G_i)$.
\item[{\rm (iii)}] If $T_1, \ldots , T_m$ are the composition factors of $G$, listed with multiplicities, then $\cd(G) \geqs \sum_i \cd(T_i)$.
\end{itemize}
\end{prop}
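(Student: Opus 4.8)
The plan is to deduce all three parts from Lemma \ref{l:bwz}, with part (i) carrying the substantive content and parts (ii) and (iii) following by specializing the chain appropriately. Observe first that the hypothesis of part (i) asserts $G_i \lhd G_{i-1}$ for each $i$, so in particular $G_1 \lhd G_0 = G$; this is exactly the normality needed to invoke Lemma \ref{l:bwz} with $B = G$ and $A = G_1$.

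For part (i) I would argue by induction on the length $m$ of the chain. The case $m=1$ is trivial, since then $G_1 = 1$ and the sum reduces to $\cd(G_0/G_1) = \cd(G)$. For the inductive step, Lemma \ref{l:bwz} applied to $B = G$, $A = G_1$ gives
\[
\cd(G) \geqs \cd(G/G_1) + \cd(G_1) = \cd(G_0/G_1) + \cd(G_1).
\]
The chain $1 = G_m \lhd \cdots \lhd G_1$ has length $m-1$, so the induction hypothesis yields $\cd(G_1) \geqs \sum_{i=2}^{m} \cd(G_{i-1}/G_i)$; substituting this in produces $\cd(G) \geqs \sum_{i=1}^{m} \cd(G_{i-1}/G_i)$, as required.

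Part (ii) is then immediate: writing $G = G_1 \times \cdots \times G_m$, set $H_j = G_{j+1} \times \cdots \times G_m$ for $0 \leqs j \leqs m$, so that $H_0 = G$ and $H_m = 1$. Each $H_j$ is a normal subgroup of $G$, hence $1 = H_m \lhd \cdots \lhd H_0 = G$ is a chain of the type required by part (i), and $H_{j-1}/H_j \cong G_j$; applying part (i) gives $\cd(G) \geqs \sum_{j} \cd(G_j)$. For part (iii), I would apply part (i) to a composition series $1 = G_m \lhd \cdots \lhd G_0 = G$, whose successive quotients are precisely the composition factors $T_1, \ldots, T_m$ up to ordering, giving $\cd(G) \geqs \sum_i \cd(T_i)$ directly (the abelian factors $C_p$ contribute $\cd(C_p) = 0$, so only the non-abelian factors affect the bound).

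Since everything reduces to a single inductive application of Lemma \ref{l:bwz}, there is no serious obstacle. The only point requiring attention is to confirm that, in each construction, the chain one writes down is genuinely subnormal, so that the normality hypothesis $A \lhd B$ of Lemma \ref{l:bwz} is satisfied at every step; this holds automatically for a composition series and for the nested product subgroups $H_j$ in part (ii), as these are normal in the ambient group.
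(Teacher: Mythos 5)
Your proof is correct and is exactly the argument the paper intends: the paper records Proposition \ref{p:cd2} as an "immediate consequence" of Lemma \ref{l:bwz}, and your induction on the chain length (applying the lemma with $B=G$, $A=G_1$, then recursing into $G_1$) together with the specializations to nested product subgroups and to a composition series is precisely how that deduction goes.
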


Note that in part (iii) above we have $\cd(T_i)=0$ if $T_i$ is abelian, so only the
non-abelian composition factors contribute to $\sum_{i} \cd(T_i)$.
Now let $T_1, \ldots , T_m$ be the non-abelian composition factors of $G$ (listed with
multiplicities), and let
\[
\ss(G) = \prod_{i=1}^m T_i
\]
be their direct product.

\begin{prop}\label{p:ss}
We have $l(\ss(G)) \leqs 5\,\cd(G)$ for every finite group $G$.
\end{prop}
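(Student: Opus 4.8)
The plan is to combine three facts already established in the paper: the additivity of length over composition factors, the simple-group inequality of Corollary \ref{c:cr}, and the subadditivity of chain difference over composition factors from Proposition \ref{p:cd2}(iii). The key observation is that $\ss(G)$ is a \emph{simple-group} object — a direct product of non-abelian simple groups — so each of its relevant invariants splits as a sum over the factors $T_1,\dots,T_m$, and the per-factor estimate is exactly what Corollary \ref{c:cr} provides.

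First I would invoke the remark following Lemma \ref{add} (a consequence of part (i)), namely that length is additive over composition factors, applied to the direct product $\ss(G) = T_1 \times \cdots \times T_m$. This gives
\[
l(\ss(G)) = \sum_{i=1}^m l(T_i).
\]
Next, since each $T_i$ is a non-abelian finite simple group, Corollary \ref{c:cr} yields $l(T_i) \leqs 5\,\cd(T_i)$ for every $i$, whence
\[
l(\ss(G)) = \sum_{i=1}^m l(T_i) \leqs 5\sum_{i=1}^m \cd(T_i).
\]

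Finally I would apply Proposition \ref{p:cd2}(iii) to the full list of composition factors of $G$. Since the abelian composition factors contribute nothing to the sum $\sum_i \cd(T_i)$, that proposition gives $\sum_{i=1}^m \cd(T_i) \leqs \cd(G)$, where the sum now runs over precisely the non-abelian factors. Substituting into the previous display completes the argument:
\[
l(\ss(G)) \leqs 5\sum_{i=1}^m \cd(T_i) \leqs 5\,\cd(G).
\]

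In truth there is no serious obstacle here: all of the real work has been absorbed into Corollary \ref{c:cr} (which in turn rests on the hard classification Theorem \ref{t:maincr}) and into Proposition \ref{p:cd2}. The only point requiring a moment of care is the bookkeeping of multiplicities — one must apply additivity of length and subadditivity of $\cd$ to the \emph{same} indexed list of non-abelian composition factors, and note that the abelian factors are harmless on both sides — but once these are aligned, the inequality is immediate.
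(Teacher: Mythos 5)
Your proposal is correct and follows exactly the same route as the paper's own proof: additivity of length over the factors of $\ss(G)$, the per-factor bound $l(T_i) \leqs 5\,\cd(T_i)$ from Corollary \ref{c:cr}, and the subadditivity $\sum_i \cd(T_i) \leqs \cd(G)$ from Proposition \ref{p:cd2}(iii). Your remark about aligning the indexed list of non-abelian composition factors (with abelian factors contributing zero) is precisely the bookkeeping the paper also notes just before stating the proposition.
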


\begin{proof}
With the above notation we have
\[
l(\ss(G)) = \sum_{i=1}^m l(T_i).
\]
By Corollary \ref{c:cr} we have $l(T_i) \leqs 5\, \cd(T_i)$ for each $i$, and by combining this with Proposition \ref{p:cd2}(iii) we obtain
\[
l(\ss(G)) \leqs \sum_{i=1}^m 5\, \cd(T_i) = 5\, \sum_{i=1}^m \cd(T_i) \leqs 5\, \cd(G)
\]
as required.
\end{proof}

By a semisimple group we mean a direct product of (non-abelian) finite simple groups.

\begin{lem}\label{l:aut}
If $G$ is a finite semisimple group, then $l({\rm Aut}(G)) \leqs 2l(G)$.
\end{lem}

\begin{proof}
Write $G =\prod_{i=1}^m T_i^{k_i}$ where the $T_i$ are pairwise non-isomorphic finite
(non-abelian) simple groups and $k_i \geqs 1$. Then
\[
{\rm Aut}(G) \cong \prod_{i=1}^m {\rm Aut}(T_i^{k_i}) \cong \prod_{i=1}^m {\rm Aut}(T_i) \wr S_{k_i}.
\]
Hence ${\rm Out}(G) \cong \prod_{i=1}^m {\rm Out}(T_i) \wr S_{k_i}$, so
\[
l({\rm Out}(G)) = \sum_{i=1}^m k_i l({\rm Out}(T_i))+ l(S_{k_i}) \leqs  \sum_{i=1}^m k_i (\log_2 |{\rm Out}(T_i)| + 3/2),
\]
where the last inequality follows from the main theorem of \cite{CST} on the length of the symmetric group. Using the well known orders of ${\rm Out}(T)$ for the finite simple groups $T$ (for example, see \cite{KL}, pp. 170-171), it is easy to verify that $\log_2 |{\rm Out}(T_i)|+ 3/2 \leqs l(T_i)$ for all $i$.
We conclude that
\[
l({\rm Out}(G))\leqs \sum_{i=1}^m k_i l(T_i) = l(G)
\]
and thus $l({\rm Aut}(G)) \leqs 2l(G)$ as required.
\end{proof}

We are now ready to prove Theorem \ref{t:cd}. Let $R(G)$ be the soluble radical of $G$
and consider the semisimple group ${\rm Soc}(G/R(G))$. Applying Proposition \ref{p:ss} to the
group $G/R(G)$ we obtain
\[
l({\rm Soc}(G/R(G))) \leqs l(\ss(G/R(G))) \leqs 5\, \cd(G/R(G)).
\]
It is well known that $G/R(G) \leqs {\rm Aut}({\rm Soc}(G/R(G)))$. Applying the inequality above
with Lemma \ref{l:aut} we obtain
\[
l(G/R(G)) \leqs l({\rm Aut}({\rm Soc}(G/R(G)))) \leqs 2l({\rm Soc}(G/R(G))) \leqs  10 \, \cd(G/R(G)) \leqs 10\, \cd(G)
\]
and the result follows.

\vs

This completes the proof of Theorem \ref{t:cd}.

\appendix

\section{On the number of prime divisors of $p \pm 1$\\
by D.R. Heath-Brown}\label{appendix}

In this appendix, we prove the following result.

\begin{thm}\label{t:hr}
  There are infinitely many primes $p\equiv 5 \imod{72}$ for which
  \[\Omega((p^2-1)/24))\leqs 7.\]
  Hence there are infinitely many primes
  $p$ for which
  \[\max\{\Omega(p\pm 1)\} \leqs 8.\]
\end{thm}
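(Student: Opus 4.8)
The plan is to prove the two assertions in turn, deducing the bound on $\max\{\Omega(p\pm 1)\}$ from the sieve estimate for $\Omega((p^2-1)/24)$. First I would dispose of the elementary reduction. For $p\equiv 5\imod{72}$ one has $p\equiv 5\imod{8}$ and $p\equiv 5\imod{9}$, so that $v_2(p-1)=2$, $v_2(p+1)=1$, $v_3(p-1)=0$ and $v_3(p+1)=1$. Hence $p-1=4a$ and $p+1=6b$ with $a,b$ coprime to $6$, and $(p^2-1)/24=ab$. For every such prime $p>5$ we have $a,b>1$, so $\Omega(a)\geqs 1$ and $\Omega(b)\geqs 1$; combined with $\Omega(a)+\Omega(b)=\Omega(ab)\leqs 7$ this forces $\Omega(a)\leqs 6$ and $\Omega(b)\leqs 6$. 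Therefore $\Omega(p-1)=2+\Omega(a)\leqs 8$ and $\Omega(p+1)=2+\Omega(b)\leqs 8$, which is the second assertion. The point of fixing the residue $5\imod{72}$ is exactly that it splits off the factor $24$ and makes the ``small'' parts of $p-1$ and $p+1$ contribute equally (two prime factors each), so that a single bound on $\Omega(ab)$ yields the symmetric bound on $\max\{\Omega(p\pm 1)\}$.

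The substance is the first assertion, which I would attack by a weighted sieve. Consider the sequence $\mathcal{A}=\{(p^2-1)/24:\ p\leqs x \text{ prime},\ p\equiv 5\imod{72}\}$. For a prime $\ell\nmid 6$ the divisibility $\ell\mid (p^2-1)/24$ is equivalent to $p\equiv\pm 1\imod{\ell}$, i.e. to two of the $\ell-1$ admissible residues, so the local density is $\omega(\ell)/\ell=2/(\ell-1)$ and the problem has sieve dimension $\kappa=2$. The required level of distribution comes from the Bombieri--Vinogradov theorem: primes are equidistributed among residues to moduli up to $x^{1/2-\varepsilon}$ on average, which (writing $d$ for the sieving modulus, so that the relevant modulus $72d$ stays below $x^{1/2}$) furnishes a level $D=x^{1/2}$ in the variable $p$. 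Since the sifted integers have size $(p^2-1)/24\asymp x^2$, this is only a quarter-power level relative to their size, and the pure lower-bound sieve of dimension $2$ (whose sieving constant is $\beta_2\approx 4.27$) would merely guarantee $(p^2-1)/24=P_{r}$ with $r$ around $17$; a weighted sieve is indispensable.

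I would therefore run the Diamond--Halberstam--Richert (Richert-type) weighted sieve of dimension $2$ with this level of distribution, choosing the sifting parameter $z=x^{1/s}$ and the logarithmic weights so as to admit a controlled number of prime factors in an intermediate range while still keeping the weighted sum positive. Optimising the weights shows that the weighted count of primes $p\leqs x$ with $p\equiv 5\imod{72}$ for which $(p^2-1)/24$ has at most $7$ prime factors is $\gg x/(\log x)^{3}$, hence positive for all large $x$; letting $x\to\infty$ produces infinitely many such primes. The main obstacle is precisely this final optimisation: the level of distribution is only $x^{1/2}$ (the best that Bombieri--Vinogradov provides) while the quantity being factored is as large as $x^{2}$, and the genuinely two-dimensional nature of the sieve makes the constants markedly worse than in the linear case, so trimming the number of prime factors all the way down to $7$---rather than $8$ or more---rests on using the sharpest available weighted sieve together with a careful, partly numerical, choice of parameters. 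Any improvement in the level of distribution (for instance a Bombieri--Friedlander--Iwaniec-type result beyond $1/2$) would immediately lower the exponent, whereas a weaker level would only yield $\Omega\leqs 8$.
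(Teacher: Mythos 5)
Your elementary reduction is exactly the paper's: for $p \equiv 5 \imod{72}$ one has $(p-1,72)=4$ and $(p+1,72)=6$, so $(p^2-1)/24 = \frac{p-1}{4}\cdot\frac{p+1}{6}$ with both factors exceeding $1$ once $p>5$, and the bound $\Omega((p^2-1)/24)\leqs 7$ then forces $\max\{\Omega(p\pm1)\}\leqs 8$. Your sieve setup also matches the paper's: the same sequence $\cA$, sieve dimension $\kappa=2$ (density $2/(\ell-1)$ at primes $\ell\geqs 5$), level of distribution $x^{1/2}$ supplied by Bombieri--Vinogradov applied to the moduli $72d$, and the correct observation that an unweighted dimension-two sieve at this level would only give roughly $P_{17}$, so that a weighted sieve is indispensable. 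The expected count $\gg x/(\log x)^3$ you state agrees with the paper's $\gg X(\log X)^{-2}$ with $X={\rm Li}(x)/24$.

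The gap is at the decisive step. The sentence ``Optimising the weights shows that the weighted count of primes $p\leqs x$ \ldots for which $(p^2-1)/24$ has at most $7$ prime factors is $\gg x/(\log x)^{3}$'' is precisely the content of the theorem, and you supply neither the optimisation nor a reference that performs it; indeed you concede that getting $7$ rather than $8$ or more ``rests on using the sharpest available weighted sieve together with a careful, partly numerical, choice of parameters.'' Nothing in your proposal rules out that the optimum of the dimension-two weighted sieve at level $\alpha=\tfrac12$ is $8$ (or worse). The paper closes exactly this gap: after verifying the hypotheses $(\Omega_1)$, $(\Omega_2^*(2))$, $(\Omega_3)$ and $(\Omega(R(2,\tfrac12)))$, it applies Theorem 10.2 of Halberstam--Richert \cite{HR}, which requires an explicit inequality $r>2u-1+(\cdots)$ involving the sieve functions $\sigma_2$ and $\eta_2$, and then appeals to Porter's numerical computations \cite{Por}, which show that with $u=2.2$ and $v=22$ (and the check $\alpha v = 11 > \nu_2 = 4.42\ldots$) the inequality holds for any $r>6.7$. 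That numerical input is what pins the answer at $7$; without it, or an equivalent explicit evaluation of the weighted-sieve integral, your argument establishes only that \emph{some} bounded $r$ works, not $r=7$.
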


We begin by showing how the second claim follows from the first.
For any prime $p\equiv 5 \imod{72}$ one has $24|(p^2-1)$.  Indeed for such
primes one has $(p-1,72)=4$ and $(p+1,72)=6$. One necessarily
has $\Omega((p-1)/4)\geqs 1$ when $p>5$, so that if $\Omega((p^2-1)/24))\leqs 7$
one must have $\Omega((p+1)/6))\leqs 6$.  It then follows that
$\Omega(p+1)\leqs 8$. The proof that $\Omega(p-1)\leqs 8$ is similar.

To handle the first statement of the theorem we use sieve
methods, as described in the book by Halberstam and Richert \cite{HR},
and in particular the weighted sieve, as in \cite[Chapter 10]{HR}. To
be specific, we apply \cite[Theorem 10.2]{HR} to the set
\[\cA=\{(p^2-1)/24 \,:\, p\equiv 5 \imod{72},\,p\leqs x\}\]
and the set $\mathfrak{P}$ of all primes. The expected value of
\[|\cA_d|:=\#\{n\in\cA\,:\, d|n\}\]
is $X\omega(d)/d$, with $X={\rm Li}(x)/24$, and where $\omega(d)$ is a
multiplicative function satisfying
\[\omega(p)=
\left\{\begin{array}{ll} 0 & \mbox{if $p=2,3$,} \\ 2 & \mbox{if $p \geqs
5$.} \end{array}\right.\]
Condition $(\Omega_1)$, see \cite[p.29]{HR}, is then satisfied with
$A_1=2$, while condition $(\Omega_2^*(\kappa))$, see \cite[p.252]{HR}, holds
with $\kappa=2$ and a suitable numerical constant $A_2$. Moreover
$|\cA_{p^2}|=O(xp^{-2})$, which shows that condition $(\Omega_3)$,
see \cite[p.253]{HR}, also holds, for an appropriate numerical constant
$A_3$.  Finally we consider the condition $(\Omega(R(2,\alpha))$
given in \cite[p.219]{HR}. The primes $p\equiv 5 \imod{72}$ for which
$d$ divides $(p^2-1)/24$ fall into $\omega(d)$ residue classes modulo $72d$,
so that $(\Omega(R(2,\tfrac12))$ holds by an appropriate form of the
Bombieri--Vinogradov theorem, as in \cite[Lemma 3.5]{HR}. This
verifies all the necessary conditions for Theorem 10.2 of \cite{HR},
and the inequality (2.2) of \cite[p.278]{HR} is satisfied (with
$\alpha=\tfrac12$) for any constant $\mu>4$, if $x$ is large enough.

Theorem 10.2 of \cite{HR} then tells us that there are
$\gg X(\log X)^{-2}$ elements $n\in\cA$ which are ``$P_r$-numbers''
(that is to say, one has $\Omega(n)\leqs r$), provided that
\[r>2u-1+\frac{2\int_u^v \frac{1}{\sigma_2(v(\alpha-1/t))}
    \left(1-\frac{u}{t}\right)\frac{dt}{t}}{1-\eta_2(\alpha v)}.\]
Finally, we refer to the calculations of Porter \cite{Por}, and in
particular the last 3 lines of \cite[p.420]{Por}, according to
which it will suffice to have $r>6.7$ if one takes $u=2.2$ and $v=22$.
Since we then have $\alpha^{-1}<u<v$ and $\alpha v=11>\nu_2=4.42\ldots$, by
Porter \cite[Table 2]{Por}, the final conditions (2.3) of
\cite[Theorem 10.2]{HR} are satisfied, and our theorem follows.

\section*{Acknowledgements}

We thank two anonymous referees for their careful reading of the paper and for many helpful comments and suggestions. The third author acknowledges the hospitality of Imperial College, London, while part of this work was carried out. He also acknowledges the support of ISF grant 686/17 and the Vinik chair of mathematics which he holds.


\begin{thebibliography}{99}

\bibitem{AST} K. Alladi, R. Solomon and A. Turull, \emph{Finite simple groups of bounded subgroup chain length}, J. Algebra \textbf{231} (2000), 374--386.

\bibitem{asch} M. Aschbacher, \emph{On the maximal subgroups of the finite classical groups},  Invent. Math. \textbf{76} (1984), 469--514.

\bibitem{Babai} L. Babai, \emph{On the length of subgroup chains in the symmetric group}, Comm. Algebra \textbf{14} (1986), 1729--1736.

\bibitem{Bad} R.W. Baddeley, \emph{Primitive permutation groups with a regular nonabelian normal subgroup}, Proc. London Math. Soc. \textbf{67} (1993), 547--595.

\bibitem{Bell}
G.W. Bell, \emph{On the cohomology of the finite special linear groups, I}, J. Algebra \textbf{54} (1978), 216--238.

\bibitem{BHR} J.N. Bray, D.F. Holt and C.M. Roney-Dougal,
\emph{The maximal subgroups of the low-dimensional finite classical groups}, London Math. Soc. Lecture Note Series, vol. 407, Cambridge University Press, 2013.

\bibitem{BWZ} B. Brewster, M. Ward and I. Zimmermann, \emph{Finite groups having chain difference one}, J. Algebra \textbf{160} (1993), 179--191.

\bibitem{BLS1} T.C. Burness, M.W. Liebeck and A. Shalev, \emph{The depth of a finite simple group}, Proc. Amer. Math. Soc. \textbf{146} (2018), 2343--2358.

\bibitem{BLS2} T.C. Burness, M.W. Liebeck and A. Shalev, \emph{The length and depth of algebraic groups}, Math. Z. \textbf{291} (2019), 741--760.

\bibitem{BLS3} T.C. Burness, M.W. Liebeck and A. Shalev, \emph{The length and depth of compact Lie groups}, Math. Z., to appear (arxiv:1805.09893).

\bibitem{CST} P.J. Cameron, R. Solomon and A. Turull, \emph{Chains of subgroups in symmetric groups}, J. Algebra \textbf{127} (1989), 340--352.

\bibitem{CLSS} A.M. Cohen, M.W. Liebeck, J. Saxl, and G.M. Seitz, \emph{The local maximal subgroups of exceptional groups of Lie type, finite and algebraic}, Proc. London Math. Soc. \textbf{64} (1992), 21--48.

\bibitem{Atlas}
J.H. Conway, R.T. Curtis, S.P. Norton, R.A. Parker and
R.A. Wilson, \emph{Atlas of Finite Groups}, Oxford University Press, 1985.

\bibitem{Craven} D.A. Craven, \emph{Alternating subgroups of exceptional groups of Lie type}, Proc. Lond. Math. Soc. \textbf{115} (2017), 449--501.

\bibitem{GP} A. Gamburd and I. Pak, \emph{Expansion of product replacement graphs}, Combinatorica \textbf{26} (2006), 411--429.

\bibitem{HR} H. Halberstam and H.-E. Richert, \emph{Sieve methods}, LMS Monographs, Academic Press, London, 1974.

\bibitem{Harada} K. Harada, \emph{Finite simple groups with short chains of subgroups}, J. Math. Soc. Japan \textbf{20} (1968), 655-672.

\bibitem{HS} M.A. Hartenstein and R.M. Solomon, \emph{Finite groups of chain difference one}, J. Algebra \textbf{229} (2000), 601--622.

\bibitem{I} K. Iwasawa, \emph{\"{U}ber die endlichen Gruppen und die Verb\"{a}nde ihrer Untergruppen}, J. Fac. Sci. Imp. Univ. Tokyo. Sect. I. \textbf{4} (1941), 171--199.

\bibitem{Jan} Z. Janko, \emph{Finite groups with invariant fourth maximal subgroups},
Math. Z. \textbf{82} (1963), 82--89.

\bibitem{Jan2} Z. Janko, \emph{Finite simple groups with short chains of subgroups},
Math. Z. \textbf{84} (1964), 428--437.

\bibitem{KL} P. Kleidman and M.W. Liebeck, \emph{The subgroup structure
  of the finite classical groups}, London Math. Soc. Lecture Note Series, vol. 129, Cambridge University Press, 1990.

\bibitem{K} J. Kohler, \emph{A note on solvable groups}, J. London Math. Soc. \textbf{43} (1968), 235--236.

\bibitem{LPS1} M.W. Liebeck, C.E. Praeger and J. Saxl, \emph{A classification of the maximal subgroups of the finite alternating and symmetric groups}, J. Algebra \textbf{111} (1987), 365--383.

\bibitem{LPS} M.W. Liebeck, C.E. Praeger and J. Saxl, \emph{On the O'Nan-Scott Theorem for finite primitive permutation groups}, J. Austral. Math. Soc. \textbf{44} (1988), 389--396.

\bibitem{LSS} M.W. Liebeck, J. Saxl, and G.M. Seitz, \emph{Subgroups of maximal rank in finite exceptional groups of Lie type}, Proc. London Math. Soc. \textbf{65} (1992), 297--325.

\bibitem{LS_Durham}
M.W. Liebeck and G.M. Seitz, \emph{A survey of of maximal subgroups of exceptional groups of Lie type}, in Groups, combinatorics \& geometry (Durham, 2001), 139--146, World Sci. Publ., River Edge, NJ, 2003.

\bibitem{LS99}
M.W. Liebeck and G.M. Seitz, \emph{On finite subgroups of exceptional algebraic groups}, J. reine angew. Math. \textbf{515} (1999), 25--72.

\bibitem{Pet} J. Petrillo, \emph{On the length of finite simple groups having chain difference one}, Arch. Math. \textbf{88} (2007), 297--303.

\bibitem{Por} J.W. Porter, \emph{Some numerical results in the Selberg sieve method},
Acta Arith. \textbf{20} (1972), 417--421.

\bibitem{SST}  G.M. Seitz, R. Solomon and  A. Turull, \emph{Chains of subgroups in groups of Lie type, II}, J. London Math. Soc. \textbf{42} (1990), 93--100.

\bibitem{SW} J. Shareshian and R. Woodroofe, \emph{A new subgroup lattice characterization of finite solvable groups}, J. Algebra \textbf{351} (2012), 448--458.

\bibitem{ST} R. Solomon and A. Turull, \emph{Chains of subgroups in groups of Lie type, I}, J. Algebra \textbf{132} (1990), 174--184.

\bibitem{ST2} R. Solomon and A. Turull, \emph{Chains of subgroups in groups of Lie type, III}. J. London Math. Soc. \textbf{44} (1991), 437--444.

\end{thebibliography}
\end{document}